\numberwithin{equation}{section}
\newtheorem{Theorem}{Theorem}[section]
\newtheorem{Corollary}[Theorem]{Corollary}
\newtheorem{Conjecture}[Theorem]{Conjecture}
\newtheorem{Lemma}[Theorem]{Lemma}
\newtheorem{Proposition}[Theorem]{Proposition}
 { \theoremstyle{definition}
\newtheorem{Definition}[Theorem]{Definition}
\newtheorem{Example}[Theorem]{Example}
\newtheorem{Remark}[Theorem]{Remark}
\newtheorem{Question}[Theorem]{Question} }
\newcommand{\mb}[1]{\mathbb{#1}}
\newcommand{\mc}[1]{\mathcal{#1}}
\newcommand{\Z}{\mb{Z}}
\newcommand{\M}{\mc{M}}
\newcommand{\Mbar}{\overline{\M}}
\newcommand{\Mgn}{\Mbar_{g,n}}
\begin{document}
\allowdisplaybreaks

\newcommand{\arXivNumber}{1902.02742}

\renewcommand{\thefootnote}{}

\renewcommand{\PaperNumber}{080}

\FirstPageHeading

\ShortArticleName{Half-Spin Tautological Relations and Faber's Proportionalities of Kappa Classes}

\ArticleName{Half-Spin Tautological Relations\\ and Faber's Proportionalities of Kappa Classes\footnote{This paper is a~contribution to the Special Issue on Integrability, Geometry, Moduli in honor of Motohico Mulase for his 65th birthday. The full collection is available at \href{https://www.emis.de/journals/SIGMA/Mulase.html}{https://www.emis.de/journals/SIGMA/Mulase.html}}}

\Author{Elba~GARCIA-FAILDE~$^\dag$, Reinier KRAMER~$^\ddag$, Danilo LEWA\'{N}SKI~$^\ddag$ and Sergey SHADRIN~$^\S$}

\AuthorNameForHeading{E.~Garcia-Failde, R.~Kramer, D.~Lewa\'{n}ski and S.~Shadrin}

\Address{$^\dag$~Institute de Physique Th\'{e}orique, CEA Paris-Saclay, Orme des Merisiers,\\
\hphantom{$^\dag$}~91191 Gif-sur-Yvette, France}
\EmailD{\href{elba.garcia-failde@ipht.fr}{elba.garcia-failde@ipht.fr}}

\Address{$^\ddag$~Max Planck Institut f\"{u}r Mathematik, Vivatsgasse 7, 53111 Bonn, Germany}
\EmailD{\href{rkramer@mpim-bonn.mpg.de}{rkramer@mpim-bonn.mpg.de}, \href{ilgrillodani@mpim-bonn.mpg.de}{ilgrillodani@mpim-bonn.mpg.de}}

\Address{$^\S$~Korteweg-de Vries Instituut voor Wiskunde, Universiteit van Amsterdam,\\
\hphantom{$^\S$}~Postbus 94248, 1090GE Amsterdam, The Netherlands}
\EmailD{\href{mailto:s.shadrin@uva.nl}{s.shadrin@uva.nl}}

\ArticleDates{Received June 19, 2019, in final form October 14, 2019; Published online October 18, 2019}

\Abstract{We employ the $1/2$-spin tautological relations to provide a particular combinatorial identity. We show that this identity is a statement equivalent to Faber's formula for proportionalities of kappa-classes on $\mathcal{M}_g$, $g\geq 2$. We then prove several cases of the combinatorial identity, providing a new proof of Faber's formula for those cases.}

\Keywords{tautological ring; tautological relations; moduli spaces of curves; Faber intersection number conjecture; odd-even binomial coefficients}
\Classification{14H10; 05A10}

\renewcommand{\thefootnote}{\arabic{footnote}}
\setcounter{footnote}{0}

\section{Introduction}

The moduli spaces of curves $ \M_{g,n}$ and their Deligne--Mumford compactifications $\Mbar_{g,n}$ are central objects in modern mathematics. Although in general their Chow rings are inifinite-dimensional, there are finite-dimensional subrings, the tautological rings $ R^* $, that contain most `naturally occuring' classes. These rings have been studied since the foundational work of Mumford~\cite{Mumf83} and Faber~\cite{Faber1999}. Overviews of the main results on these rings can be found in \cite{Pand16,Tavakol2016,Vakil2003,Zvonkine2012}.

The system of tautological rings $ \{ R^*(\Mbar_{g,n})\}_{g,n}$ can be defined succinctly as the smallest system of subalgebras of the Chow rings closed under pushforwards along the three tautological maps
\begin{gather*}
\pi \colon \ \Mbar_{g,n+1} \to \Mbar_{g,n} ,\\
\rho \colon \ \Mbar_{g,n +1} \times \Mbar_{h,m+1} \to \Mbar_{g+h,n+m} ,\\
\sigma \colon \ \Mbar_{g,n+2} \times \Mbar_{g+1,n} ,
\end{gather*}
where the first map forgets the last marked point and the other two glue two marked points together, see \cite{FaPa05}. This system of rings is also closed under pullbacks along the above-mentioned maps, and it contains the natural tautological $ \psi$-, $\kappa $-, and $ \lambda $-classes, after which the rings are named.

In fact, a set of additive generators of the tautological rings can be given by dual graphs, which are graphs with $n$ leaves (or labelled half-edges) decorated as follows: to each vertex $v$ we attach a genus $ g(v)$ and a product of $ \kappa $-classes, and to each half-edge we attach a power of a $\psi$-class. Vertices represent stable components of algebraic curves, half-edges represent special points (i.e., can be either nodes of the curve or leaves), among which labelled half-edges represent the $n$ leaves.
We interpret the dual graphs as follows: we attach to vertices of genus~$g'$ and valency~$n'$ a copy of $\Mbar_{g',n'}$, we form the product of all $\kappa$- and $ \psi$-classes attached to the vertex or to its half-edges, and we push it forward along the gluing tautological maps given by the edges.

The tautological rings of the open space $ \mc{M}_{g,n}$ and its partial compactifications are defined via restriction from $ \Mgn$. As $ \mc{M}_{g,n}$ corresponds to all smooth curves, all graphs with at least one edge restrict to zero on this space; the only tautological classes are polynomials in $ \kappa$- and $\psi$-classes. We denote by $\mc{M}^{\textup{ct}}_{g,n}$ and $\mc{M}^{\textup{rt}}_{g,n}$ the partial compactifications of $\mc{M}_{g,n}$ by stable nodal curves of compact type and with rational tails, respectively.

In fact there are many relations between dual graphs in the tautological ring. These relations are called tautological relations and they encode the structure of the tautological rings. Therefore, the understanding of tautological rings boils down to the understanding of their tautological relations.

\subsection{Half-spin relations}
One way of approaching tautological relations is via cohomological field theories (CohFTs).
One particular CohFT has played a distinguished role in this context. It is a shifted version of Witten's $r$-spin class \cite{PolischukVaintrob2001, Witten1993}, and has been thoroughly studied by Pandharipande--Pixton--Zvonkine in two different ways~\cite{PPZ13,PPZ16}. On the one hand, Witten's class is quasi-homogeneous, and this gives a degree bound for its shifted version. On the other hand, any semi-simple CohFT can be constructed via Givental's action from its degree zero part, and this gives an explicit description for the shifted Witten's class, that seemingly has non-trivial contributions in high degrees. As both approaches should lead to the same result, this gives tautological relations in degrees above the bound, called $r$-spin relations.

In \cite{PPZ16}, it was also proved that the Witten $r$-spin class is polynomial in~$ r$ for~$ r$ large. This makes it possible to choose~$r$, which a priori should be an integer greater or equal to two, to be any number. In~\cite{KLLS17}, the authors observed that taking the value $ r=\frac{1}{2} $ results in much simplified relations compared to the case of general~$r$. These relations are called half-spin relations.

The coefficients of the half-spin relations are proportional to expressions of the type
\begin{gather}\label{eq:evenodd}
\binom{2a+1}{2d} \cdot (2d-1)!!, \qquad a,d \in \mathbb{Z}_{\geq 0}
\end{gather}
(cf.~\cite[Lemma 2.1]{KLLS17}). It turns out that further applications of half-spin relations require a better understanding the combinatorial structure of these numbers. We propose some purely combinatorial questions about them, cf.~Question~\ref{ques:mainquestion} and Conjecture~\ref{conj:comb} that arose naturally from our analysis of Faber's conjecture.

\subsection{Faber's intersection numbers conjecture}

The top tautological group $R^{g-2}(\mathcal{M}_g)$ is one-di\-{}men\-{}sio\-{}nal, spanned by the class $\kappa_{g-2}$, \mbox{$g\geq 2$} \cite{FaberNonVanish, Looijenga1995}. All other monomials of kappa-classes, $\kappa_{a_1}\cdots\kappa_{a_\ell}$, $\ell\geq 1$, $a_1,\dots,a_\ell \geq 1$, $a_1+\cdots+a_\ell = g-2$, are proportional to $\kappa_{g-2}$ with some coefficients of proportionality. These coefficients were conjectured by Faber in~\cite[Conjecture~1c]{Faber1999}, and he also observed in \emph{op.~cit.} that the class $\lambda_g\lambda_{g-1}$ vanishes on $\overline{\mathcal{M}}_{g,n}\setminus\mc{M}^{\textup{rt}}_{g,n}$. An equivalent form of his conjecture (now theorem) can be represented as follows:

\begin{Theorem}[Faber's intersection numbers conjecture]\label{thm:Faber}Let $n \geq 2$ and $g \geq 2$. For any $d_1,\dots, d_n\geq 1$, $d_1+\cdots+d_n = g-2+n$, there exists a constant $C_g$ that only depends on $g$ such that
\begin{gather}\label{Faber_ctt}
\frac {1}{(2g-3+n)!}
\int_{\overline{\mathcal{M}}_{g,n}} \lambda_g\lambda_{g-1} \prod_{i=1}^n \psi_i^{d_i} (2d_i-1)!! = C_g.
\end{gather}
\end{Theorem}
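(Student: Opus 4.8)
The statement is a theorem---proved via the Virasoro constraints for $\P^2$ by Getzler--Pandharipande together with Givental's proof of those constraints, and directly by Liu--Xu and by Buryak--Shadrin---but the route I would take here, in the spirit of the present paper, is through the half-spin relations, and it reduces \eqref{Faber_ctt} to a purely combinatorial identity.

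First I would record the reduction to a socle computation. Faber's observation that $\lambda_g\lambda_{g-1}$ restricts to zero on $\Mbar_{g,n}\setminus\mc{M}^{\textup{rt}}_{g,n}$ shows that the functional $\alpha\mapsto\int_{\Mbar_{g,n}}\lambda_g\lambda_{g-1}\,\bar\alpha$ is independent of the tautological lift $\bar\alpha$ of a class $\alpha$ on $\mc{M}^{\textup{rt}}_{g,n}$, hence descends to a functional on $R^{g-2+n}(\mc{M}^{\textup{rt}}_{g,n})$; that group is one-dimensional and the functional is nonzero on it (e.g.\ $\int_{\Mbar_{g,n}}\lambda_g\lambda_{g-1}\psi_1^{g-1}\psi_2\cdots\psi_n\neq0$), so it identifies the socle with $\Q$. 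For fixed $g$ and $n$, \eqref{Faber_ctt} is then equivalent to the statement that $\int_{\Mbar_{g,n}}\lambda_g\lambda_{g-1}\psi_1^{d_1}\cdots\psi_n^{d_n}\prod_i(2d_i-1)!!$ does not depend on the choice of $(d_1,\dots,d_n)$; the $S_n$-symmetry of the integrand and the (correction-free) string and dilaton equations for $\lambda_g\lambda_{g-1}$-integrals dispose of markings with $d_i=1$ and link different values of $n$, so that, inducting also on $\max_i d_i$, the entire content is one relation that lowers $\max_i d_i$, after which $C_g$ is read off from a base case.

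Next I would feed in the half-spin relations of \cite{KLLS17}: on every $\Mbar_{g,n}$ and in every sufficiently high degree these are explicit tautological relations whose coefficients are, up to sign, proportional to the odd--even binomials of \eqref{eq:evenodd}. I would restrict such a relation to $\mc{M}^{\textup{rt}}_{g,n}$ and multiply by $\lambda_g\lambda_{g-1}$. Every boundary term supported off the rational-tails locus then dies, and each surviving boundary term---a genus-$g$ vertex carrying a tree of $\P^1$'s with the remaining markings---reduces, by repeated use of the string and dilaton equations on the rational components, to a $\lambda_g\lambda_{g-1}$-integral of the same shape with fewer markings. What remains is a linear recursion expressing $\int_{\Mbar_{g,n}}\lambda_g\lambda_{g-1}\psi_1^{d_1}\cdots\psi_n^{d_n}$ with $\max_i d_i=D$ through those with smaller maximum, all coefficients being explicit combinations of the numbers $\binom{2a+1}{2d}(2d-1)!!$. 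Substituting the desired closed form $C_g(2g-3+n)!/\prod_i(2d_i-1)!!$ turns this recursion into an identity purely among the odd--even binomials; this is exactly the combinatorial statement isolated in the paper, cf.\ Conjecture~\ref{conj:comb}, and verifying it for a given family of parameters proves \eqref{Faber_ctt} there. The constant $C_g$ is fixed by one base case---say $n=2$, $(d_1,d_2)=(g-1,1)$, where dilaton reduces the integral to a multiple of the one-point Hodge integral $\int_{\Mbar_{g,1}}\lambda_g\lambda_{g-1}\psi_1^{g-1}$, which is known in closed form (Faber--Pandharipande) in terms of Bernoulli numbers.

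The main obstacle is thus this last step: proving Conjecture~\ref{conj:comb} for all parameter values. I would attack it by encoding $\binom{2a+1}{2d}(2d-1)!!$ through generating functions---expanding $(1+x)^{2a+1}$ and using $(2d-1)!!$ as Gaussian moments, i.e.\ pairing against the kernel $\mathrm{e}^{-t^2/2}$---and then looking for a hypergeometric transformation or a Wilf--Zeilberger-style certificate. Absent a uniform argument one obtains only the ranges of parameters treatable by hand, and for the remaining cases the theorem continues to rest on the Gromov--Witten-theoretic proofs cited at the outset.
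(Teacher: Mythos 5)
Your proposal follows essentially the same route as the paper: restrict half-spin relations to the rational-tails locus, pair with $\lambda_g\lambda_{g-1}$, reduce Faber's proportionalities to a combinatorial identity among the numbers $\binom{2a+1}{2d}(2d-1)!!$, fix $C_g$ by the one-point base case, prove the identity in low-complexity cases, and otherwise defer to the known Gromov--Witten-theoretic proofs --- which is exactly the paper's stance (a genuinely new proof only for $n\le 5$). Two small calibrations: the statement equivalent to Faber's formula is the main identity~\eqref{eq:keyeq} (and passing from the string-equation-extended correlators to the uniform double-factorial form there requires the formal negative-$\psi$ argument of Proposition~\ref{prop:derivative}), whereas Conjecture~\ref{conj:comb} is a strictly stronger refinement that implies it; and the clean collapse of the rational tails in Proposition~\ref{CombIdentity} comes from recognizing genus-zero ($x=1$) half-spin relations on the tails rather than from string/dilaton manipulations alone.
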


\begin{Remark}\label{rem:C-g} In particular, $\int_{\overline{\mathcal{M}}_{g,1}} \lambda_g\lambda_{g-1}\psi_1^{g-1}=\frac{(2g-2)!}{(2g-3)!!}C_g$. This integral is computed in~\cite[Theorem~2]{Faber1999}, so it is known that $C_g = \frac{|B_{2g}|}{2^{2g-1}(2g)!}$, where $B_{2g}$ is the Bernoulli number.
\end{Remark}

This theorem has several proofs: Getzler and Pandharipande~\cite{GetPan} derived it from the Virasoro constrains for $\mathbb{P}^2$ proved by Givental~\cite{Givental2001MMJ}. Liu and Xu~\cite{LiuXu} derived it from an identity for the $n$-point functions of the intersection numbers of $\psi$-classes that comes from the KdV equation. Goulden, Jackson, and Vakil proved it for $n\leq 3$ using the reductions of Faber--Hurwitz classes~\cite{GouJackVak}. Buryak and the fourth author proved it using relations for double ramification cycles~\cite{BuryakShadrin}. Finally, Pixton showed the compatibility of this theorem with Faber--Zagier relations in~\cite{PixtonThesis}, also proved by Faber and Zagier (unpublished, see a remark in~\cite{PPZ16}). Together with a result of~\cite{PPZ16}, this shows that Faber--Zagier relations imply this theorem.

In fact, all these independent proofs are inspired by quite different ideas and they all lead to a deeper understanding of the geometry of the moduli spaces of curves. In this paper, we use the half-spin relations to transform Faber's conjecture into a combinatorial identity. This gives insight into the use of half-spin relations and the related combinatorics of expressions of the form of (\ref{eq:evenodd}).
On the other hand, it gives insight into Faber's formula itself, as we extend it to formal negative powers of $\psi$-classes.

We then prove several cases of the combinatorial identity, providing a new proof of Faber's conjecture for $n$ less than or equal to five.

\subsection{Organization of the paper}
In Section~\ref{sec:half-spin}, we give the definition of the half-spin relations. In Section~\ref{sec:reduction}, we reduce Faber's conjecture (Theorem~\ref{thm:Faber}) to a combinatorial identity using the half-spin relations. In Section~\ref{sec:NegativePsiClasses}, we introduce formal negative powers of $\psi$-classes to reduce the combinatorial identity to a~simpler one, which we refer to as the main combinatorial identity of the paper. In Section~\ref{sec:CombStruc}, we investigate this identity from a combinatorial viewpoint and conjecture a refinement. In Appendix~\ref{sec:ProofSpecialCases}, we give a combinatorial proof of the identity in low-degree cases.

\section{Definition of half-spin relations}\label{sec:half-spin}

We will define two specific cases of the half-spin relations in $R^{\geq g} (\mc{M}^{\textup{ct}}_{g,n}) $, as this is all we need for the rest of the paper. For a more general version and the construction, see~\cite{KLLS17}.

First we need to define stable graphs.

\begin{Definition}A \emph{stable graph} is the data $\Gamma = (V, H, L,E, g\colon V \to \Z_{\geq 0}, v\colon H \to V, \iota \colon H \to H)$ such that
\begin{enumerate}\itemsep=0pt
\item[1)] $V$ is the vertex set with genus function $g$;
\item[2)] $\iota $ is an involution of $H$, the set of half-edges;
\item[3)] the set $L$ of legs or leaves is given by the fixed points of $\iota$;
\item[4)] the set $E$ of edges is given by the two-point orbits of $ \iota$;
\item[5)] $v$ sends a half-edge to the vertex it is attached to;
\item[6)] the graph given by $(V,E)$ is connected;
\item[7)] for each vertex $ w \in V$, the stability condition holds: $ 2g(w) -2 + n(w) >0$, where $n(w) = |v^{-1}(w)|$ is the valence of $w$.
\end{enumerate}
For such a stable graph, its \emph{genus} is given by $g(\Gamma ) = \sum_{v \in V} g(v) + h^1(\Gamma )$, where $h^1(\Gamma )$ is the first Betti number of the graph. The \emph{type} of a stable graph $\Gamma$ is given by $ (g(\Gamma), |L|)$.
\end{Definition}

We recall that the $r$-spin relations are proved in \cite{PPZ16} by taking the Cohomological Field Theory given by Witten's $r$-spin class, and showing that it is polynomial in $r$ in a certain way. This is a subtle argument, hinging on the primary fields $ a_1, \dotsc, a_n$ attached to the leaves. For the $r$-spin theory, these are numbers between $0$ and $r-2$, such that $ A \coloneqq \sum a_i \equiv g-1+D \mod r-1$, where $D$ is the degree of the relation. To show polynomiality in $r$, this congruence is lifted to an equality $ A = g-1+D + x(r-1)$ for some $x\in \Z_{\geq 0}$. If $x=0$, all the primary fields can be taken constant in $r$, and polynomiality follows from the argument of~\cite{PPZ16}.

For $x\geq 1$, however, the argument is more complicated, as the polynomiality does not hold over all of $ \Mgn$. However, under certain conditions, it still holds on certain subspaces, where we can then use it to get half-spin relations on these subspaces. As taking $r=\frac{1}{2}$ is in effect taking a linear combination of relations for integer graphs, we do get relations on all of $\Mgn$, but their description is not explicit outside the given subspace. For more details, see~\cite{KLLS17}. We will only give the half-spin relations needed for this paper; they only use trees.

\begin{Definition}Define the polynomials
\begin{gather}
Q_m (a) \coloneqq \frac{(-1)^m}{2^m m!} \prod_{k=1}^{2m} \left( a+1 -\frac{k}{2} \right) .
\end{gather}
Let $ n\geq 2$, $D\geq g$ and $ a_1, \dotsc, a_n$ be non-negative integers, called \emph{primary fields}, with sum $A \coloneqq \sum_{i=1}^n a_i = g-1+D$. Consider all stable \emph{trees} $ \Gamma = (V,H,L,E,g,v,\iota )$ of type $(g,n)$ and decorate them in the following way:
\begin{itemize}\itemsep=0pt
\item On each leg labeled by $i$, place the sum $ \sum_{d_i = 0}^{a_i} Q_{d_i} (a_i)\psi_i^{d_i} $, and place the integer $ a_i-d_i$ on the corresponding half-edge fixed by $\iota$.
\item On each vertex $v$, we use the tree structure to work inwards from the leaves. If we have determined all half-integers $b_i$ at its incident half-edges except one, say $ b_0$, then $b_0 \coloneqq g(v) - 1 - \sum_i b_i$ if this is at least zero. Otherwise, set $b_0 \coloneqq g(v)-\frac{3}{2} - \sum_i b_i $.
\item On each edge with half-integers $ a$ and $b$ on its two half-edges, place the sum $-\sum_{m>0} Q_{n}(a+m) (\psi + \psi')^{m-1} \delta_{a+b+m,-\frac{3}{2}}$, where $\psi$ and $\psi'$ are the $\psi$-classes corresponding to the two half-edges.
\end{itemize}
The \emph{half-spin relation for $ x= 0$}, $ \Omega_{g,n}^D(a_1, \dotsc, a_n) =0 \in R^D (\mc{M}_{g,n}^{\textup{rt}})$, is given by the sum of these decorated stable graphs with these coefficients being zero in degree $D$.
\end{Definition}
\begin{Remark}
Although the coefficient on the edge does not seem to be symmetric in $a$ and $ b$, a simple calculation shows it actually is.

In fact, the coefficient on an edge with $a$ and $ b$ on its two half-edges coming from the $r$-spin relations is
\begin{gather}\label{EdgeContributionComplicated}
\frac{1}{\psi + \psi'} \bigg( \delta_{a+b, -\frac{3}{2}} - \sum_{m,m'=0}^\infty \sum_{c,d \in \frac{1}{2}\Z} Q_m(c)Q_{m'}(d) \delta_{a,c-m} \delta_{b,d-m'} \delta_{c+d,-\frac{3}{2}} \psi^m (\psi')^{m'} \bigg) .
\end{gather}
This is equal to the coefficient given in the definition, but we give this equation as well, as it is closer to the form of the $r$-spin relations in \cite{PPZ16}, and because it is useful for the rest of the paper. In this formula, the numbers $c$ and $d$ should be interpreted as being placed near the middle of the edge, or at the end of the half-edges. In this way, they are similar to the $a_i$ on the leaves, and they will also be called primary fields. Meanwhile, the $ a_i -d_i$ are similar to the $a $ and $ b$ on the edges. This analogy will be used in the proof of Proposition~\ref{CombIdentity}. The equality can be seen from the relation
\begin{gather*}
Q_m(a+m) Q_{m'}(b+m') = \binom{m+m'}{m}Q_{m+m'}(a+m+m') \qquad \text{if $a + b + m +m' = -\frac{3}{2}$.}
\end{gather*}
\end{Remark}
\begin{Remark} These relations have been proved in \cite{KLLS17}, by specialization of the $r$-spin relations proved in~\cite{PPZ16}. The proof in~\cite{KLLS17} uses the polynomiality of the $r$-spin relations in $r$, which is also proved in~\cite{PPZ16}. The half-spin relations can be extended to all of $ \Mgn$, but this extension is not unique, much less explicit, and unnecessary for our purpose. However, their extension is of principal importance for applications in Gromov--Witten theory, since it allows to prove the following statement (a reformulation of~\cite[Lemma~5.2]{KLLS17}):
\end{Remark}
\begin{Proposition}\label{prop:lemma5.2} Any monomial of $\psi$-classes of degree at least $\max(g,1)$ on $\Mgn$ can be expressed in terms of the boundary classes that involve no $\kappa$-classes, that is, in terms of the dual graphs with at least one edge, decorated only by $\psi$-classes.
\end{Proposition}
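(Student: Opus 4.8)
The strategy is to read off, from a single well‑chosen family of half‑spin relations, a system of tautological relations that is triangular in the monomial basis $\bigl\{\prod_i\psi_i^{d_i}\bigr\}$ with invertible diagonal, and then to solve for the individual monomials by induction.

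\emph{Step 1: extracting the main term.}
I would first isolate the contribution of the trivial one‑vertex tree (one genus‑$g$ vertex carrying all $n$ legs) to the relation $\Omega^D_{g,n}(a_1,\dots,a_n)=0$. On that vertex every half‑edge is a leg and hence carries the value $a_i-d_i$; the genus rule then forces $\sum_i(a_i-d_i)=g-1$ (the half‑integer alternative $g-\frac32$ is impossible), i.e.\ $\sum_i d_i=D$, so the trivial tree contributes exactly
\[
\sum_{\substack{e_1+\cdots+e_n=D\\ 0\le e_i\le a_i}}\Bigl(\,\prod_{i=1}^n Q_{e_i}(a_i)\Bigr)\prod_{i=1}^n\psi_i^{e_i},
\]
which is homogeneous of degree $D$. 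Every other stable tree in $\Omega^D_{g,n}$ has at least one edge, and all decorations prescribed in the definition (the $Q$‑polynomials, the powers of $\psi+\psi'$, the trivial class on each vertex) involve only $\psi$‑classes. These relations hold a priori on $\mathcal{M}^{\mathrm{ct}}_{g,n}$; by the extension recalled in the preceding Remark, there are matching relations on all of $\Mgn$ whose additional terms are again dual graphs with at least one edge decorated only by $\psi$‑classes. Fixing such relations, each of them asserts that an explicit linear combination of $\psi$‑monomials of degree $D$ lies in the span of $\psi$‑decorated boundary graphs; the goal becomes to show that, when $D\ge\max(g,1)$, each individual such monomial lies there too.

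\emph{Step 2: the triangular family and the induction.}
The case $g=0$ requires no spin relations: on $\Mbar_{0,n}$ each $\psi_i$ is itself a boundary divisor (Keel), so every positive‑degree monomial is a $\psi$‑decorated boundary class (it vanishes outright for $n=3$). So assume $g\ge 1$, whence $D\ge g$. Given a monomial $\prod_i\psi_i^{d_i}$ of degree $D$, relabel the legs so that $d_1\ge 1$ (possible as $D\ge 1$) and apply the relation with primary fields $a=(g-1+d_1,d_2,\dots,d_n)$; then $a_i\ge 0$ and $\sum_i a_i=g-1+D$ as required. In the main term of Step~1 the constraints $e_i\le d_i$ for $i\ge 2$ together with $\sum_i e_i=D$ force $e_1\ge d_1$, with equality only for $(e)=(d)$, and the coefficient of $\prod_i\psi_i^{d_i}$ equals $Q_{d_1}(g-1+d_1)\prod_{i\ge 2}Q_{d_i}(d_i)$; this is nonzero since the explicit product formula for $Q_m$ shows that $Q_m(m)\ne 0$ and $Q_m(g-1+m)\ne 0$ for all $m\ge 0$ and all $g\ge 1$ (all factors are positive). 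This is exactly where the hypothesis enters, since $Q_m(m-1)=0$. Every remaining monomial $\prod_i\psi_i^{e_i}$ occurring in the main term has $e_1>d_1$, hence $\sum_{i\ge 2}e_i<\sum_{i\ge 2}d_i$. I would therefore induct on $m:=\sum_{i=2}^n d_i$: when $m=0$ the main term reduces to the single monomial $\psi_1^D$ with nonzero coefficient $Q_D(g-1+D)$, and in general all off‑diagonal monomials are $\psi$‑decorated boundary classes by the inductive hypothesis, so $\prod_i\psi_i^{d_i}$ is solved for as such. (For $n=1$ one uses the evident one‑leg analogue of these relations.)

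\emph{Where the difficulty lies.}
The key realisation is that one need not invert the full rectangular matrix of $Q$‑coefficients: the special choice $a=(g-1+d_1,d_2,\dots,d_n)$ makes the relevant relations triangular with respect to the partial order by $\sum_{i\ge 2}d_i$, and an induction on that quantity then suffices; the supporting nonvanishing computations are immediate from the product definition of $Q_m$, and are precisely what singles out $D\ge\max(g,1)$. The one genuinely external input is the extension of the half‑spin relations from $\mathcal{M}^{\mathrm{ct}}_{g,n}$ to all of $\Mgn$ with only $\psi$‑decorated boundary corrections, which one takes from the construction in \cite{KLLS17}.
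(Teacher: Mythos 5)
Your combinatorial core is sound, and it is worth saying so first: isolating the one-vertex term of $\Omega^{D}_{g,n}(g-1+d_1,d_2,\dots,d_n)$, noting that the constraints $e_i\le a_i$ for $i\ge 2$ and $\sum_i e_i=D$ force $e_1\ge d_1$ with equality only at $(d_1,\dots,d_n)$, and checking $Q_{d_1}(g-1+d_1)\prod_{i\ge2}Q_{d_i}(d_i)\neq 0$, does give a triangular system and a valid induction on $\sum_{i\ge 2}d_i$. Note, however, that the paper itself contains no proof of this proposition: it is quoted as a reformulation of \cite[Lemma~5.2]{KLLS17} (as observed in \cite{CladerJandaWangZakharov}), so your argument has to stand on its own.

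And there it has a genuine gap at the decisive step. The relation you use is, by the definitions in this paper, an identity in $R^D\big(\mc{M}^{\textup{rt}}_{g,n}\big)$ (with all $a_i\ge 0$ you are in the $x=0$ case, which lives on rational tails, not even on $\mc{M}^{\textup{ct}}_{g,n}$ as you write), whereas the proposition is a statement on $\Mgn$. Your bridge -- ``there are matching relations on all of $\Mgn$ whose additional terms are again dual graphs with at least one edge decorated only by $\psi$-classes'' -- is precisely the hard content, and nothing in this paper supplies it: the Remark you appeal to says explicitly that the extension is neither unique nor explicit, and the natural extensions coming from the $r$-spin formula of \cite{PPZ16} involve non-tree graphs with $\kappa$-decorations at the vertices, which cannot in general be rewritten as $\kappa$-free boundary classes. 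Establishing that the correction terms can be taken $\kappa$-free is essentially the statement of \cite[Lemma~5.2]{KLLS17} itself, so outsourcing it to ``the construction in \cite{KLLS17}'' makes the argument circular as an independent proof; what you have genuinely proved is the analogue of the statement after restriction to $\mc{M}^{\textup{rt}}_{g,n}$. Two smaller points: the case $n=1$ is outside the range $n\ge 2$ of the relations as defined here, and deducing it from $n=2$ by a forgetful pushforward reintroduces $\kappa$-classes, so the parenthetical ``evident one-leg analogue'' also needs justification; and the hypothesis $D\ge\max(g,1)$ enters mainly through the existence condition $D\ge g$ for the relation (plus Keel for $g=0$), rather than through $Q_m(m-1)=0$ as you suggest.
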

This reformulation of~\cite[Lemma~5.2]{KLLS17} is noted in~\cite{CladerJandaWangZakharov} (where an alternative approach to the same statement is developed), and in this reformulation Proposition~\ref{prop:lemma5.2} immediately resolves Conjecture~3.14 in~\cite{LinZhou} and Conjecture~3 in~\cite{FaPa05}.

We will also need the half-spin relation on $ \Mbar_{0,n}$ for $x=1$. We give them here on $ \mc{M}_{g,n}^{\textup{ct}} $ for general $g$, which reduces to $ \Mbar_{0,n}$ for $g=0$.

\begin{Definition}Now, let $ n\geq 2$, $D \geq g+1$, and the primary fields $ a_1, \dotsc, a_{n-1}$ be non-negative integers, and $a_n \leq -\frac{3}{2}$ with sum $ A = g+ D-\frac{3}{2}$. Then the \emph{half-spin relation for $x=1$}, $ \Omega_{g,n}^D(a_1, \dotsc, a_n) =0 \in R^D (\mc{M}_{g,n}^{\textup{ct}})$, is given by a sum over decorated stable trees with the same conditions as the ones for $x=0$.
\end{Definition}
\begin{Remark}Although the (local) conditions are the same, the (global) relations are different, because the sum of the primary fields is different.
\end{Remark}

\section{A combinatorial identity from half-spin relations}\label{sec:reduction}

In this section, we employ the half-spin relations to prove the following proposition. We shall denote by $\llbracket n \rrbracket$ the set $\{1,\dots,n\}$.

\begin{Proposition}\label{CombIdentity} For any $g\geq 2$ and $n\geq 2$, for any $a_1,\dots,a_n\in\mathbb{Z}_{\geq 0}$, $a_1+\cdots+a_n=2g-3+n$, we have the following equation:
\begin{align} \label{eq:keyeq0-1}
& 0 = \sum_{k=1}^n \frac{(-1)^k}{k!}\sum_{\substack{I_1\sqcup\cdots\sqcup I_k = \llbracket n \rrbracket\\ I_j\neq \varnothing, \forall\, j\in \llbracket k \rrbracket}}
\sum_{\substack{d_1,\dots,d_k \in\mathbb{Z}_{\geq 0} \\ d_1+\cdots+d_k=g-2+k}}
\langle \tau_{d_1}\cdots \tau_{d_k}\rangle_g \cdot \prod_{j=1}^k Q_{d_j+|I_j|-1}(a_{[I_j]}).
\end{align}
Here we denote $\sum_{\ell\in I_j} a_\ell$ by $a_{[I_j]}$ and
\begin{gather}
\langle \tau_{d_1}\cdots \tau_{d_k}\rangle_g \coloneqq \frac{1}{C_g}\int_{\overline{\mathcal{M}}_{g,k}} \lambda_g\lambda_{g-1} \prod_{i=1}^k \psi_i^{d_i},
\end{gather}
where $C_g$ is an arbitrary constant depending only on $g$ $($for instance, it is convenient to assume that $C_g$ is the constant given in Remark~{\rm \ref{rem:C-g})}.

Moreover, for a fixed $g\geq 0$, the whole system of equations~\eqref{eq:keyeq0-1} $($we can vary parameters $n\geq 2$ and $a_1,\dots,a_n)$ determines all integrals $\langle \tau_{d_1}\cdots \tau_{d_k}\rangle_g$, $k\geq 2$, in terms of $\langle \tau_{g-1} \rangle_g$.
\end{Proposition}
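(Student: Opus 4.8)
The plan is to derive the identity \eqref{eq:keyeq0-1} from the $x=1$ half-spin relation on $\Mbar_{0,n+1}$, viewed as a relation in $R^D(\mc{M}^{\textup{ct}}_{0,n+1})$ for a suitable degree $D$, and then to push it forward along the natural map that glues the $(n+1)$-st marked point of the genus-$0$ curve to a genus-$g$ component. Concretely, I would take primary fields $a_1,\dots,a_n\in\Z_{\geq 0}$ on the first $n$ legs with $a_1+\cdots+a_n=2g-3+n$, and a half-integer primary field $a_{n+1}\le-\tfrac32$ on the last leg chosen so that the constraint $A=g+D-\tfrac32$ of the $x=1$ definition is met. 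The stable trees of type $(0,n+1)$ that carry a nonzero decoration are, after contracting edges whose coefficient vanishes, exactly the ``star'' trees: a single internal structure built by iterating the edge-$\delta$ constraint $c+d+m+m'=-\tfrac32$, which forces the configuration to look like one central vertex with the $n$ leaves distributed among $k$ groups $I_1,\dots,I_k$. This is where the analogy flagged in the Remark after the $x=0$ definition — between the $a_i-d_i$ on half-edges and the $a,b$ on edges, and between the $c,d$ near an edge and the $a_i$ on leaves — does the real bookkeeping: each group $I_j$ collapses, via the relation $Q_m(a+m)Q_{m'}(b+m')=\binom{m+m'}{m}Q_{m+m'}(a+m+m')$ on the $\delta$-locus, to a single factor $Q_{d_j+|I_j|-1}(a_{[I_j]})$, and the $1/k!$ together with the sum over ordered set partitions accounts for the automorphisms and labelings of the $k$ groups. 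The genus-$g$ vertex, after pushforward of $\lambda_g\lambda_{g-1}$ and the monomial $\prod\psi_i^{d_i}$ against the gluing, contributes precisely the normalized integral $\langle\tau_{d_1}\cdots\tau_{d_k}\rangle_g$, and the degree count $d_1+\cdots+d_k=g-2+k$ is exactly the dimension constraint on $\Mbar_{g,k}$ after the $\lambda_g\lambda_{g-1}$ drop of $2g-1$ in dimension. Vanishing of $\lambda_g\lambda_{g-1}$ off $\mc{M}^{\textup{rt}}_{g,n}$ (Faber's observation, recalled in the introduction) kills all the boundary strata of the glued genus-$g$ part, so only the ``one central vertex'' terms survive and the relation becomes exactly \eqref{eq:keyeq0-1}.

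For the second assertion — that, for fixed $g$, the whole system as $n$ and the $a_i$ vary determines every $\langle\tau_{d_1}\cdots\tau_{d_k}\rangle_g$ with $k\ge2$ from $\langle\tau_{g-1}\rangle_g$ — I would argue by induction on $n$ (equivalently on $k$, the maximal number of parts). Fix $n$ and look at \eqref{eq:keyeq0-1}: the $k=n$ term is the unique term in which every $I_j$ is a singleton, so it reads $\tfrac{(-1)^n}{n!}\sum_{d_1+\cdots+d_n=g-2+n}\langle\tau_{d_1}\cdots\tau_{d_n}\rangle_g\prod_j Q_{d_j}(a_j)$, while every $k<n$ term involves only integrals $\langle\tau_{\bullet}\cdots\rangle_g$ with at most $n-1$ insertions, which are known by the inductive hypothesis. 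Thus \eqref{eq:keyeq0-1} expresses a fixed linear combination (with coefficients $\prod_j Q_{d_j}(a_j)$) of the unknowns $\{\langle\tau_{d_1}\cdots\tau_{d_n}\rangle_g : d_i\ge0,\ \sum d_i=g-2+n\}$ in terms of known quantities, for every choice of $(a_1,\dots,a_n)$ with $\sum a_i=2g-3+n$. It remains to check that letting the $a_i$ range produces enough independent linear conditions to pin down all these unknowns. Since $Q_d(a)$ is a polynomial in $a$ of degree $2d$ with nonzero leading coefficient, the functions $a\mapsto Q_0(a),Q_1(a),\dots,Q_m(a)$ are linearly independent, hence span the same space as $1,a,\dots,a^{2m}$; varying $(a_1,\dots,a_n)$ over enough integer points (with the fixed-sum constraint, which still leaves an $(n-1)$-dimensional lattice, more than enough) the matrix with rows indexed by admissible $(a_i)$ and columns by admissible $(d_i)$ has full column rank. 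So the $k=n$ part of the system is invertible for $\langle\tau_{d_1}\cdots\tau_{d_n}\rangle_g$, completing the induction; the base case $k=1$ is $\langle\tau_{g-1}\rangle_g=1$ by the normalization, which is the generator we reduce to.

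The step I expect to be the genuine obstacle is the first one: carefully identifying which decorated stable trees contribute and proving that the combinatorial collapse of each leaf-group $I_j$ to the single factor $Q_{d_j+|I_j|-1}(a_{[I_j]})$, with exactly the claimed sign $(-1)^k/k!$ and range $d_1+\cdots+d_k=g-2+k$, comes out on the nose. This requires tracking the half-integer labels $b_i$ prescribed by the vertex rule, the edge $\delta$-functions $\delta_{a+b+m,-3/2}$, the $(\psi+\psi')^{m-1}$ factors, and their interplay with the pushforward along the forgetful/gluing maps — in particular seeing that all the $\psi$ and $\psi'$ edge classes on the genus-$0$ side either cancel or reorganize into the $\psi_i^{d_i}$ on the genus-$g$ vertex, and that the only surviving degree is $D=g-1+(g-2+\text{(number of parts)})$ forced by $\lambda_g\lambda_{g-1}$. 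The identity $Q_m(a+m)Q_{m'}(b+m')=\binom{m+m'}{m}Q_{m+m'}(a+m+m')$ on the $\delta$-locus is the key algebraic input that makes the telescoping work, and applying it inductively along each chain of contracted edges is the heart of the argument; everything after that (the linear-algebra inversion in the second part) is comparatively routine.
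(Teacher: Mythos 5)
Your derivation of \eqref{eq:keyeq0-1} does not work as described, and the obstruction is structural. You propose to take the $x=1$ half-spin relation on $\Mbar_{0,n+1}$ and push it forward along the map gluing the $(n+1)$-st leg to a genus-$g$ component, i.e., along $\rho\colon \Mbar_{0,n+1}\times\Mbar_{g,1}\to\Mbar_{g,n}$. But then the genus-$g$ geometry enters only through the single factor $\Mbar_{g,1}$: since $\rho^*(\lambda_g\lambda_{g-1})=1\otimes\lambda_g\lambda_{g-1}$ and the $\psi$-classes split accordingly, any numerical consequence of such a pushed-forward relation factorizes as (genus-zero integrals) times one-point integrals $\int_{\Mbar_{g,1}}\lambda_g\lambda_{g-1}\psi_1^{d}$. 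Hence only $\langle\tau_d\rangle_g$ can ever appear, whereas \eqref{eq:keyeq0-1} contains the correlators $\langle\tau_{d_1}\cdots\tau_{d_k}\rangle_g$ with $k\geq 2$ in an essential way (indeed the second assertion of the proposition is precisely that the system determines them). Relatedly, the ``central vertex'' of your star-shaped genus-zero trees is a genus-zero vertex of $\Mbar_{0,n+1}$; after gluing at one point it does not become the genus-$g$ vertex, and its $\psi$-classes do not turn into the classes $\psi_1^{d_1}\cdots\psi_k^{d_k}$ on $\Mbar_{g,k}$. The paper's proof starts instead from the $x=0$ half-spin relation taken directly in $R^{g-2+n}(\mc{M}^{\textup{rt}}_{g,n})$ with $A=2g-3+n$ and $D=g-2+n\geq g$ (this is where $n\geq 2$ is used): there the admissible trees have one genus-$g$ vertex carrying $k$ distinct nodes, one per rational tail, which is what ultimately produces the $k$-point integrals. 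The genus-zero $x=1$ relations are used only as an auxiliary device, applied tail by tail after re-summing the root-edge coefficient over the primary field at the cut, to kill every tail configuration except a single rational vertex with all $\psi$-classes on the root edge; then forgetful pushforwards together with the string equation reduce each tail to one marked point (forcing $D_I=|I|-2$), and the identity follows by pairing with $\lambda_g\lambda_{g-1}$ via the isomorphism $\frac{1}{C_g}\int\lambda_g\lambda_{g-1}\cdot\colon R^{g-2}(\mc{M}_g)\to\mathbb{Q}$. None of these steps is recoverable from a relation living purely on $\Mbar_{0,n+1}$.

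On the second assertion, your induction on the number of insertions is in the right spirit (the paper disposes of this point with a lexicographic-order remark), but the key step is weaker than you claim: you need the coefficients $\prod_j Q_{d_j}(a_j)$, evaluated only at the finitely many lattice points of the simplex $a_i\geq 0$, $\sum a_i=2g-3+n$, to separate the unknowns, and linear independence of the $Q_d$ as polynomials in one variable does not give this, since restriction to the hyperplane $\sum a_i=2g-3+n$ is not injective on polynomials and no Zariski-density argument is available on a finite set of points. The intended argument is a triangularity one: using that $Q_d(a)=0$ for integers $0\leq a<d$, a suitable choice of $(a_1,\dots,a_n)$ for each target monomial makes the $k=n$ block of the system triangular with nonzero diagonal with respect to the natural lexicographic order, and the lower-$k$ terms are known by induction. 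This part is repairable; the first part is the genuine gap.
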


\begin{Remark} Note that equation~\eqref{eq:keyeq0-1} can also be considered as an equation for the classes in $R^{g-2}(\mc{M}_g)$, once we replace the symbols $\langle \tau_{d_1}\cdots \tau_{d_k}\rangle_g$ by the restrictions to $R^{g-2}(\mathcal{M}_g)$ of the classes $\pi_*(\psi_1^{d_1}\cdots \psi_k^{d_k})$, where $\pi\colon \overline{\mathcal{M}}_{g,k}\to \overline{\mathcal{M}}_{g}$.
\end{Remark}

\begin{proof}We will use relations in $R^{g-2+n}(\mc{M}^{\textup{rt}}_{g,n})$ given by half-spin relations for $A=2g-3+n$. Note that to produce relations $D\coloneqq g-2+n$ must be at least $g$, and hence we have $n\geq 2$.

The restriction to $ \mc{M}^{\textup{rt}}_{g,n}$ means that all allowed stable trees must have one vertex~$v_g$ of genus~$g$, and all other vertices have genus $0$. If we cut $v_g$ from such a stable tree, it falls apart in several connected components, which are called rational tails.

The leaves are then distributed among these rational tails, and this gives a decomposition $\llbracket n \rrbracket=\bigsqcup_{i=1}^k I_i$. If $|I_i| = 1$, this corresponds to a leaf attached to $v_g$. We will therefore consider all graphs where the points with indices in $I_i$ lie on a separate rational tail, for every $i=1,\dots,k$.

We want to simplify these relations by applying half-spin relations in genus zero to each of the tails. Hence, we will now consider a particular rational tail that contains points with indices in $I\subset \llbracket n \rrbracket$, with $|I|\geq 2$. Consider the edge that attaches this rational tail to the genus $g$ component, and assume that it is decorated by $\psi^d$ at the node on the genus $g$ component. We call this edge the root edge, $e_r$, for this tail.

The total (cohomological) degree of the rest of this tail is given by the number of edges, excluding this one, together with the total number of $\psi$-classes, excluding this one. We will call this degree $D_I$. It cannot be larger than $|I|-2$, since $\dim_{\mathbb{C}}\overline{\mathcal{M}}_{0,|I|+1} = |I|-2$ and the graph is constructed via pushforward along a map from this space. This means that the end of the root edge which connects to the rational tail is decorated with $\psi^\ell$ for some $0\leq \ell\leq |I|-2$.
\begin{figure}[h!]\centering
\includegraphics{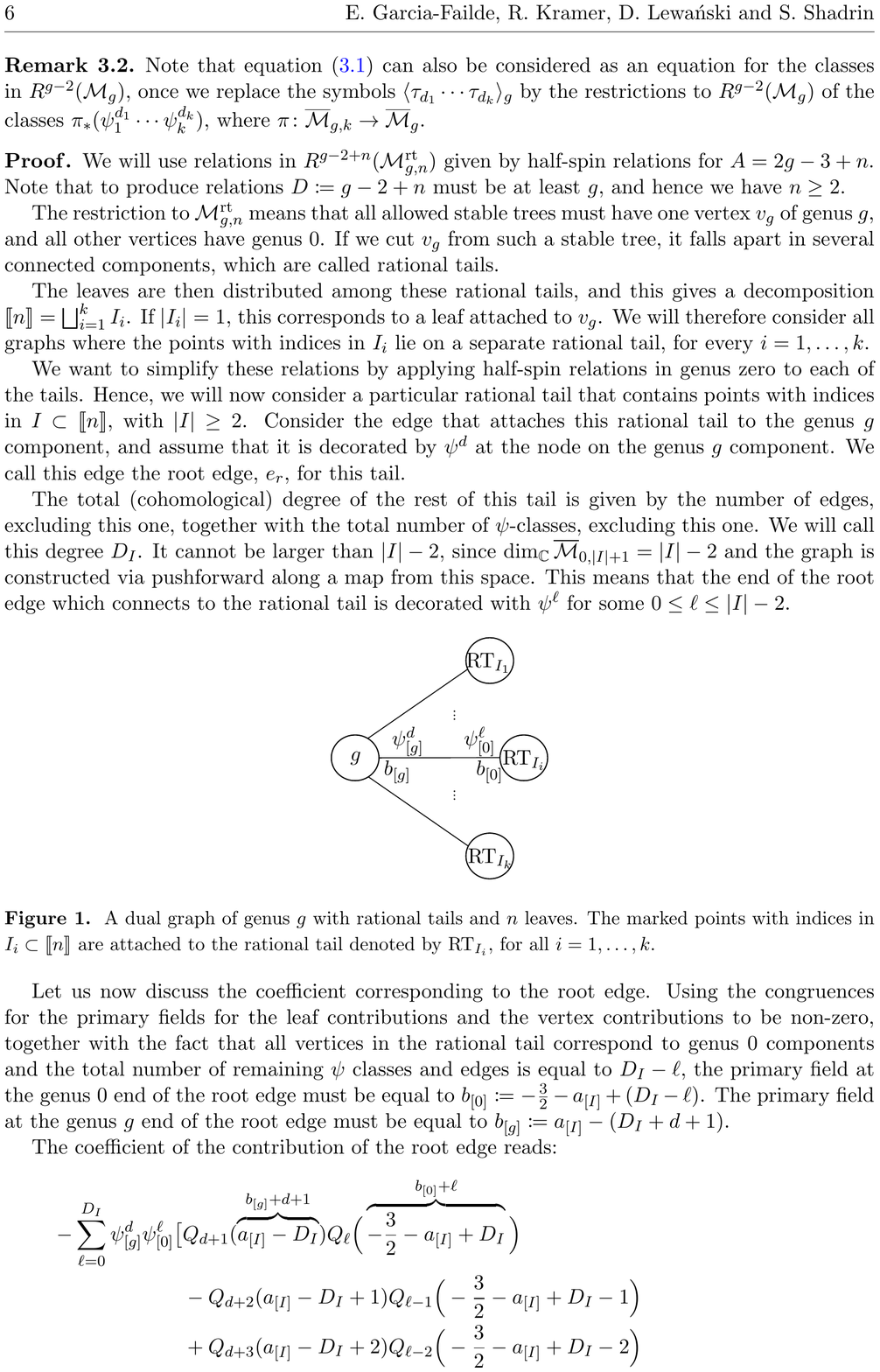}
\caption{A dual graph of genus $g$ with rational tails and $n$ leaves. The marked points with indices in $I_i\subset \llbracket n \rrbracket$ are attached to the rational tail denoted by $\text{RT}_{I_i}$, for all $i=1,\ldots,k$.}
\end{figure}

Let us now discuss the coefficient corresponding to the root edge. Using the congruences for the primary fields for the leaf contributions and the vertex contributions to be non-zero, together with the fact that all vertices in the rational tail correspond to genus $0$ components and the total number of remaining $\psi$ classes and edges is equal to $D_I-\ell$, the primary field at the genus $0$ end of the root edge must be equal to $b_{[0]}\coloneqq -\frac{3}{2}-a_{[I]}+(D_I-\ell)$. The primary field at the genus $g$ end of the root edge must be equal to $b_{[g]}\coloneqq a_{[I]}-(D_I+d+1)$.

The coefficient of the contribution of the root edge reads
\begin{gather*}
-\sum_{\ell=0}^{D_I} \psi_{[g]}^d\psi_{[0]}^\ell \bigg[
Q_{d+1}(\aoverbrace[L1U1R]{a_{[I]}-D_I}^{b_{[g]}+d+1}) Q_{\ell}\bigg(\aoverbrace[L1U1R]{{-}\frac{3}{2}-a_{[I]}+D_I}^{b_{[0]}+\ell}\bigg)\\
\qquad{} -Q_{d+2}(a_{[I]}-D_I+1) Q_{\ell-1}\left(-\frac{3}{2}-a_{[I]}+D_I-1\right)\\
\qquad{} +Q_{d+3}(a_{[I]}-D_I+2) Q_{\ell-2}\left(-\frac{3}{2}-a_{[I]}+D_I-2\right)\\
\qquad{} \vdots\\
\qquad{} + (-1)^{\ell}
Q_{d+\ell+1}(a_{[I]}-D_I+\ell) Q_{0}\left({-}\frac{3}{2}-a_{[I]}+D_I-\ell\right)\bigg],
\end{gather*}
where the alternating sum comes from the division by $ \psi_{[g]} + \psi_{[0]} $, following equation~\eqref{EdgeContributionComplicated}. Let us take this sum in a bit different way, with respect to the argument of the second factor $a_0=-\frac{3}{2}-a_{[I]}+D_I-j$, where $j$ runs from $D_I$ to $0$, and decompose the exponent of $\psi_{[0]}$ as $\ell=j+k$. We have
\begin{gather*} -\sum_{a_0=-3/2-a_{[I]}}^{-3/2-a_{[I]}+D_I} \psi_{[g]}^d (-1)^{-3/2-a_0-a_{[I]}+D_I} Q_{d+1-3/2-a_{[I]}+D_I-a_0}\left( -\frac{3}{2}-a_0\right)\psi_{[0]}\!\aoverbrace[L1U1R]{^{-3/2-a_{[I]}+D_I-a_0}}^{j}\\
\qquad{} \times \left[\sum_{k=0}^{a_{[I]}- (-3/2-a_0)} Q_k(a_0) \psi_{[0]}^k \right].
\end{gather*}
The sum over $a_0$ here is over half-integers, with integer steps.

Let us analyse the sum $\sum_{k=0}^{a_{[I]} + 3/2+a_0} Q_k(a_0) \psi_{[0]}^k$. We cut the root edge and assign $a_0$ as primary field for the new leaf on the rest of the tail, which is decorated with $\psi_{[0]}^k$. The total dimension of the class on the rest of the tail is $D_0 = D_I-j = D_I-\big({-}\frac{3}{2}-a_{[I]}+D_I-a_0\big) =\frac{3}{2}+a_{[I]}+a_0$. Thus $a_0+a_{[I]} = D_0 -\frac{3}{2}$. Therefore, if $D_0\geq 1$, then with this sum on the root edge the total sum of all graphs in the tail (for a~fixed~$a_0$) is the half-spin relation for $x=1$, with primary field~$a_0$ at the root edge and~$a_i$, $i\in I$, for the marked points on the tail.

Thus the only nontrivial contribution of the tail comes from the case $D_0=0$ which produces no relation for the tail, with $a_0=-\frac{3}{2}-a_{[I]}$. In this case there is the unique non-trivial summand in the sum above that is equal to
\begin{gather*}
(-1)^{D_I+1}\psi_{[g]}^d\psi_{[0]}^{D_I} Q_{d+D_I+1}(a_{[I]}) .
\end{gather*}
Moreover, the only non-trivial $\psi$-classes are on the root edge and there are no more internal edges on the tail.

In the end, modulo the relations in genus $0$ on the tails, the only graphs that remain in the relation in degree $D=g-2+n$ are the following. The marked points are split in $k$ non-empty sets $I_1, \dots, I_k$, corresponding to different rational tails. If $I_i$ is a set of one element, then the tail is just a leaf decorated with $\psi^{d_i}$ and the coefficient is $Q_{d_i}(a_{[I_i]})$. If $I_i$ is a set of two or more points, then this tail is just one rational vertex with all leaves from $I_i$ on it, attached by an edge to the genus $g$ vertex. The $\psi$-classes are only on this edge, $\psi^{d_i}$ on the genus $g$ side and $\psi^{D_I}$ on the genus $0$ side, with the coefficient $(-1)^{D_I+1}Q_{d_i+D_I+1} (a_{[I_i]})$.

Up to now, everything we described was done in $R^{g-2+n}(\mc{M}^{\textup{rt}}_{g,n})$. Hence, we still need to pushforward to $\mc{M}_{g,k}$, along the map forgetting some of the marked points. For each decorated graph we constructed, we will pushforward until each tail has exactly one marked point left, and hence must be a leaf.

We can do this on each tail individually, first using the string equation, which in this case reads $\int_{\Mbar_{0,|I|+1} }\psi_{[0]}^{D_I} = \int_{\Mbar_{0,|I|}} \psi_{[0]}^{D_I-1}$. Therefore, pushing forward along a map forgetting a point in $I$ decreases the exponent of $\psi_{[0]}$ by one. As this can be done until the rational tail has two marked points, we must get $ D_I = |I|-2$.

Finally, the pushforward of a rational tail with two marked points along the map forgetting one of those marked points just collapsed the tail and moves the remaining marked point to the collapsed node.

Summarising, the only surviving terms are the terms where all the marked points are partitioned as $ \bigsqcup_k I_k = \llbracket n \rrbracket $ over rational tails consisting of a leaf or a single rational curve with all marked points attached to it, with coefficient
\begin{gather*}
(-1)^{|I|-1}\psi_{[g]}^d\psi_{[0]}^{|I|-2} Q_{d+|I|-1}(a_{[I]}) .
\end{gather*}
These terms pushforward to terms on $ \mc{M}_{g,k}$ given by
\begin{gather*}
(-1)^{|I|-1}\psi_I^d Q_{d+|I|-1}(a_{[I]}) .
\end{gather*}
Taking the product over all the tails and taking into account that the linear function
\begin{gather*}\frac{1}{C_g}\int_{\overline{\mathcal{M}}_g}\lambda_g\lambda_{g-1}\cdot \colon \ R^{g-2}(\mathcal{M}_g)\rightarrow \mathbb{Q}
\end{gather*}
is an isomorphism, the half-spin relations we found for $D=g-2+n$ imply the combinatorial identity~\eqref{eq:keyeq0-1}.

On the other hand, it is easy to see that these relations determine the intersections of all possible monomials in $\psi$-classes in terms of $\int_{\overline{\mathcal{M}}_{g,1}}\lambda_{g}\lambda_{g-1}\psi_1^{g-1}$ (using the natural lexicographic order).
\end{proof}

We relate Proposition~\ref{CombIdentity} to Faber's conjecture and refine it using the string equation, which turns the result into a combinatorial identity.

\begin{Corollary}Let $g \geq 2$ and $n \geq 2$. The following two statements are equivalent:
\begin{enumerate}\itemsep=0pt
\item[$i)$] Faber's Conjecture~{\rm \ref{thm:Faber}}: there exists a constant $C_g$ that only depends on $g$ such that
\begin{gather}\label{eq:FabersConj}
\langle \tau_{d_1}\cdots \tau_{d_k}\rangle_g \coloneqq \frac{1}{C_g} \int_{\overline{\mathcal{M}}_{g,n}} \lambda_g \lambda_{g-1} \prod_{i=1}^n \psi_i^{d_i} = \frac{(2g - 3 + n)!}{\prod\limits_{i=1}^n (2d_i - 1)!!}
\end{gather}
for any $d_1, \dots, d_n \geq 1$.
\item[$ii)$] For any $a_1,\dots,a_n\in\mathbb{Z}_{\geq 0}$ such that $a_1+\cdots+a_n=2g-3+n$, we have
\begin{align} \label{eq:keyeq0}
& 0 = \sum_{k=1}^n \frac{(-1)^k}{k!}
\sum_{\substack{I_1\sqcup\cdots\sqcup I_k = \llbracket n \rrbracket\\ I_j\neq \varnothing, \forall\, j\in \llbracket k \rrbracket}}
\sum_{\substack{d_1,\dots,d_k \in\mathbb{Z}_{\geq 0} \\ d_1+\cdots+d_k=g-2+k}}
\langle \tau_{d_1}\cdots \tau_{d_k}\rangle_g^{\star}\cdot\prod_{j=1}^k Q_{d_j+|I_j|-1}(a_{[I_j]}),
\end{align}
where
\begin{gather}
\langle \tau_{d_1}\cdots \tau_{d_k}\rangle_g^{\star}=
\frac{(2g - 3 + k)!}{\prod\limits_{i=1}^k (2d_i - 1)!!} \qquad \text{in case} \quad d_1, \dots, d_k \geq 1,
\end{gather}
and determined by the string equation
\begin{gather}\label{eq:string}
\langle \tau_{d_1}\cdots \tau_{d_k}\tau_0\rangle_g^{\star} = \sum_{\substack{j=1 \\ d_j\geq 1}}^k \langle \tau_{d_1-\delta_{1j}}\cdots \tau_{d_k-\delta_{kj}}\rangle^{\star}_g
\end{gather}
otherwise. Here $a_{[I_j]}$ denote $\sum_{\ell\in I_j} a_\ell$.
\end{enumerate}
\end{Corollary}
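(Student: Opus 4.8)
The plan is to show the equivalence by leveraging Proposition~\ref{CombIdentity}, which already tells us that the system of equations~\eqref{eq:keyeq0-1} (with $\langle\tau_{d_1}\cdots\tau_{d_k}\rangle_g$ the geometrically-defined normalized integrals) holds, and moreover that the system uniquely determines all the $\langle\tau_{d_1}\cdots\tau_{d_k}\rangle_g$ with $k\geq 2$ in terms of $\langle\tau_{g-1}\rangle_g$. The key observation is that equation~\eqref{eq:keyeq0} has exactly the same shape as~\eqref{eq:keyeq0-1}, but with the symbols $\langle\tau_{d_1}\cdots\tau_{d_k}\rangle_g$ replaced by the \emph{combinatorially-defined} quantities $\langle\tau_{d_1}\cdots\tau_{d_k}\rangle_g^\star$. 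Thus the strategy is: (a) verify that the $\langle\cdot\rangle_g^\star$ are a well-defined consistent system of numbers, i.e.\ that the string equation~\eqref{eq:string} together with the closed formula~\eqref{eq:FabersConj} for $d_i\geq 1$ is non-contradictory; (b) show that the geometric integrals $\langle\cdot\rangle_g$ \emph{always} satisfy the string equation~\eqref{eq:string}, regardless of whether Faber's conjecture holds; (c) combine with the uniqueness part of Proposition~\ref{CombIdentity}.

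Concretely, I would first recall that the integrals $\frac{1}{C_g}\int_{\Mbar_{g,k}}\lambda_g\lambda_{g-1}\prod\psi_i^{d_i}$ satisfy the usual string equation: pushing forward along the map forgetting a point carrying $\psi^0$ and using $\pi^*\psi_i = \psi_i - D_i$ together with $\lambda_g\lambda_{g-1}$ being a pullback from $\Mbar_{g,k}$ gives precisely~\eqref{eq:string} for the geometric $\langle\cdot\rangle_g$. (One also needs the dimension/degree bookkeeping so that only the stated terms survive, and the normalization by $C_g$ is harmless since it is the same constant on both sides.) Hence the geometric integrals and the $\star$-integrals satisfy the \emph{same} recursion~\eqref{eq:string} whenever one of the $d_i$ vanishes; they differ only potentially on the "all $d_i\geq 1$" locus, where the $\star$-version is \emph{defined} to equal $(2g-3+k)!/\prod(2d_i-1)!!$ and the geometric version equals that iff Faber's conjecture holds.

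For the direction $i)\Rightarrow ii)$: if Faber's conjecture holds, then $\langle\cdot\rangle_g = \langle\cdot\rangle_g^\star$ on the all-$d_i\geq 1$ locus, and since both satisfy the string equation they agree everywhere; substituting into~\eqref{eq:keyeq0-1} (which holds by Proposition~\ref{CombIdentity}) yields~\eqref{eq:keyeq0}. For the converse $ii)\Rightarrow i)$: suppose~\eqref{eq:keyeq0} holds for all admissible $a_i$. The numbers $\langle\cdot\rangle_g^\star$ satisfy the system~\eqref{eq:keyeq0-1} with $\langle\cdot\rangle_g^\star$ in place of $\langle\cdot\rangle_g$; but by the uniqueness statement of Proposition~\ref{CombIdentity}, that system has a \emph{unique} solution once $\langle\tau_{g-1}\rangle_g$ is fixed — and both systems share the same value $\langle\tau_{g-1}\rangle_g = \langle\tau_{g-1}\rangle_g^\star$ (for $n=1$: here $\langle\tau_{g-1}\rangle_g^\star$ should be read off from the normalization, matching Remark~\ref{rem:C-g}; one checks the $\star$-recursion is consistent with this base case, using that $\langle\tau_{g-1}\rangle_g^\star$ is the $k=1$ value and all $k\geq 2$ values with some $d_i=0$ reduce to it via~\eqref{eq:string}). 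Therefore $\langle\cdot\rangle_g = \langle\cdot\rangle_g^\star$ for all $k\geq 2$, which is exactly Faber's formula~\eqref{eq:FabersConj}.

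The main obstacle I anticipate is step (b)/(c) bookkeeping: carefully matching the base case so that the uniqueness argument applies, i.e.\ confirming that $\langle\tau_{g-1}\rangle_g^\star$ (defined via the combinatorial normalization / string equation) coincides with the actual geometric value $\langle\tau_{g-1}\rangle_g$, and that the two recursions genuinely "see" the same initial data. A subtlety is that Proposition~\ref{CombIdentity}'s uniqueness is phrased as determining everything in terms of $\langle\tau_{g-1}\rangle_g$ via the lexicographic order; I must check that the $\star$-system, with its string-equation-driven definition on the $d_i=0$ locus, feeds into that same triangular elimination, so that imposing~\eqref{eq:keyeq0} forces the $\star$-values to equal the geometric ones term by term. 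Once the string equation for the geometric $\lambda_g\lambda_{g-1}\psi$-integrals is in hand, the rest is a formal comparison of two solutions of the same linear triangular system with the same right-hand side.
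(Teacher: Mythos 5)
Your proposal is correct and follows essentially the same route as the paper: the paper's proof is exactly the two-line version of your argument, namely that by Proposition~\ref{CombIdentity} the geometric correlators satisfy the system and are recovered from it, so Faber's formula holds if and only if the conjectural values $\langle\cdot\rangle_g^{\star}$ also satisfy~\eqref{eq:keyeq0}. The bookkeeping you flag (the string equation for the $\lambda_g\lambda_{g-1}$-integrals, since $\lambda$-classes are pulled back under the forgetful map, and matching the one-point base case via the choice of $C_g$) is left implicit in the paper but is exactly what makes its terse proof work.
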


\begin{proof}By Proposition~\ref{CombIdentity}, the left-hand side of equation~\eqref{eq:FabersConj} satisfies equation~\eqref{eq:keyeq0}, and the integrals can be recovered from this equation. Therefore, both sides of equation~\eqref{eq:FabersConj} are equal if and only if the right-hand side also satisfies equation~\eqref{eq:keyeq0}.
\end{proof}

\section{Psi-classes of negative degree}\label{sec:NegativePsiClasses}

In the previous section, we showed that Theorem~\ref{thm:Faber} is equivalent to a system of combinatorial identities. The goal of this section is to reduce this system to a much nicer system of identities. In order to do this, we need to consider formal systems of correlators satisfying the string equation.

\subsection{Formal negative degrees of psi-classes}

\begin{Definition}\label{def:stringequation}Let $g\geq 2$. Consider a system of numbers $\big\langle\prod_{i=1}^k \tau_{d_i} \big\rangle_g^\bullet$ that depends on $d_1,\dots,d_k\in \mathbb{Z}$, $d_1+\dots+d_k = g-2+k$, and is symmetric in these variables. We say that this system of numbers \emph{satisfies the string equation} if
\begin{gather*}
\langle \tau_{d_1}\cdots \tau_{d_k}\tau_0\rangle_g^\bullet = \sum_{j=1}^k \langle \tau_{d_1-\delta_{1j}}\cdots \tau_{d_k-\delta_{kj}}\rangle_g^\bullet.
\end{gather*}
\end{Definition}

\begin{Example} \label{example:1}The system of numbers
\begin{gather*}
\bigg\langle \prod_{i=1}^k \tau_{d_i} \bigg\rangle_g^\bullet \coloneqq \begin{cases}
\displaystyle \bigg\langle\prod_{i=1}^k \tau_{d_i} \bigg\rangle_g^{\star}, & d_1,\dots, d_k\geq 0, \\
0, & \text{at least one } d_i \text{ is negative}
\end{cases}
\end{gather*}
satisfies the string equation, as follows, by definition, from equation~\eqref{eq:string}.
\end{Example}

\begin{Example} \label{example:2}The system of numbers $\big\langle \prod_{i=1}^k \tau_{d_i} \big\rangle_g^\bullet \coloneqq (2g-3+k)!/\prod_{i=1}^k (2d_i-1)!!$
also satisfies the string equation (this can be checked by direct inspection).
\end{Example}

\begin{Remark}These two examples coincide in the case when all $d_i$'s are positive and also in the case when all $d_i$'s except for one are positive and the remaining one is equal to zero. For other values of $(d_1,\dots,d_k)$ the numbers in these two examples are generally different.
\end{Remark}

The string equation allows to choose the values of all numbers $\big\langle\prod_{i=1}^k \tau_{d_i} \big\rangle_g^\bullet$, $\prod_{i=1}^{k} d_i \not=0$, $k\geq 1$ in an arbitrary way, and the rest of the numbers (where at least one index $d_i$ is equal to zero) are linear combinations of these initial values with non-negative integer coefficients.

\subsection[$Q$-polynomials and a refined string equation]{$\boldsymbol{Q}$-polynomials and a refined string equation}

Fix $g\geq 2$ and $n\geq 2$ and let $a_1,\dots,a_n$ be formal variables. Define $Q_i(a)\equiv 0 $ for $i<0$. Fix an arbitrary system of numbers $\big\langle\prod_{i=1}^k \tau_{d_i} \big\rangle_g^\bullet$, $d_1,\dots,d_k\in \mathbb{Z}$, $d_1+\dots+d_k = g-2+k$, symmetric in these variables and satisfying the string equation.

Consider the following expression
\begin{gather} \label{eq:expression}
\mathcal{E}_{g,n}(\vec{a}) \coloneqq
 \sum_{k=1}^n \frac{(-1)^k}{k!}
\sum_{\substack{I_1\sqcup\cdots\sqcup I_k = \llbracket n \rrbracket\\ I_j\neq \varnothing,\, \forall\, j\in \llbracket k \rrbracket}}
\sum_{\substack{d_1,\dots,d_k \in\mathbb{Z} \\ d_1+\cdots+d_k=g-2+k}}
 \langle \tau_{d_1}\cdots \tau_{d_k}\rangle_g^\bullet \cdot \prod_{j=1}^k Q_{d_j+|I_j|-1}(a_{[I_j]})
\end{gather}
as a polynomial in $a_1,\dots,a_n$ and a linear function in $\langle \tau_{d_1}\cdots \tau_{d_k}\rangle_g^\bullet$, $d_1\cdots d_k\not=0$.

\begin{Proposition}\label{prop:derivative} For any $d_1,\dots,d_k\in \mathbb{Z}$, $d_1+\dots+d_k = g-2+k$, $d_1\cdots d_k\not=0$, where at least one index $d_i$ is negative, we have
\begin{gather*}
\frac{\partial \mathcal{E}_{g,n}(\vec{a})}{\partial \langle \tau_{d_1}\cdots \tau_{d_k}\rangle_g^\bullet} \equiv 0.
\end{gather*}
\end{Proposition}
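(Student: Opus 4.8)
The plan is to compute the coefficient $\partial\mathcal{E}_{g,n}(\vec a)/\partial\langle\tau_{d_1}\cdots\tau_{d_k}\rangle_g^\bullet$ directly and show it is the zero polynomial. Call a correlator \emph{free} if none of its subscripts equals $0$. Applying the string equation of Definition~\ref{def:stringequation} repeatedly rewrites any correlator as a $\mathbb{Z}_{\geq 0}$-linear combination of free ones, and this terminates because the string equation strictly lowers the number of insertions; hence $\mathcal{E}_{g,n}(\vec a)$ becomes a polynomial in $\vec a$ that is linear in the free correlators. Writing $T\coloneqq\langle\tau_{d_1}\cdots\tau_{d_k}\rangle_g^\bullet$, the derivative $\partial\mathcal{E}_{g,n}/\partial T$ is the coefficient of $T$ in this expansion, which equals $\mathcal{E}_{g,n}$ evaluated on the unique system $\langle\cdot\rangle^{[T]}$ that satisfies the string equation and has $T\mapsto 1$ and every other free correlator $\mapsto 0$ (such a system exists and is unique by the discussion following Example~\ref{example:2}). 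So I would reduce the statement to showing $\mathcal{E}_{g,n}[\langle\cdot\rangle^{[T]}]\equiv 0$ in $\vec a$ whenever some $d_i<0$.

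The second step is to pin down which decorated partitions contribute to $\mathcal{E}_{g,n}[\langle\cdot\rangle^{[T]}]$. A correlator $\langle\tau_{e_1}\cdots\tau_{e_m}\rangle^{[T]}$ is nonzero only if the string reduction of $\langle\tau_{e_1}\cdots\tau_{e_m}\rangle$ contains $T$; since the string equation only deletes a $\tau_0$ and decreases one other subscript at each step, this forces $m\geq k$, forces the $m-k$ extra insertions to be zeros absorbed by the string equation, and forces each surviving subscript to be $\geq$ the matching subscript of $T$. In particular a subscript ending at a negative value $d_i<0$ is produced either directly by a block carrying a negative subscript --- which, by the convention $Q_j\equiv 0$ for $j<0$, must then have size at least $2$ --- or by decrementing one or more $\tau_0$'s along the way. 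The key bookkeeping fact is that a large proportion of the $Q$-factors involved are simply $Q_0\equiv 1$: a block $I$ carrying a subscript $d$ with $d+|I|-1=0$ contributes $Q_0$, and so does a singleton carrying subscript $0$; the remaining, not-obviously-trivial factors I would control with the multiplicativity relation $Q_m(a+m)Q_{m'}(b+m')=\binom{m+m'}{m}Q_{m+m'}(a+m+m')$, valid whenever $a+b+m+m'=-\frac{3}{2}$, recorded in the Remark following~\eqref{EdgeContributionComplicated}.

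Finally I would exhibit a sign-reversing cancellation on the set of contributing configurations, built from a pair of mutually inverse local moves centred at a negative subscript of $T$: a singleton carrying subscript $0$ whose $\tau_0$ is consumed (by the string equation) along that negative subscript can be \emph{merged} into the block realizing it, lowering the block's subscript by one, replacing the corresponding $\tau_0$ on the correlator side by an honest decrement, and decreasing the number of blocks by one --- hence flipping the overall sign --- while leaving the product of $Q$-weights unchanged (the factors involved being $Q_0$'s, or matched by the multiplicativity identity above); the inverse move \emph{splits}. Running these moves and summing over how many blocks realize a given negative subscript converts its contribution into an alternating binomial sum, which telescopes to zero. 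The delicate point --- and the real content of the proof --- is to make this correspondence both well-defined (a canonical choice of which block to merge or split, so configurations are weighted with exactly the right multiplicity) and globally sign-reversing: one must verify that the combinatorial multiplicities assemble into precisely the binomial coefficients that make the alternating sums vanish, and one must organize the moves at the several negative subscripts of $T$ so that they do not interfere. Once the pairing is in place, all contributions cancel in pairs with opposite sign and equal weight, so $\mathcal{E}_{g,n}[\langle\cdot\rangle^{[T]}]=0$, as asserted.
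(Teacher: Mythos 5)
Your opening reduction is fine: by linearity, the derivative with respect to $T=\langle\tau_{d_1}\cdots\tau_{d_k}\rangle_g^\bullet$ is indeed $\mathcal{E}_{g,n}$ evaluated on the string-equation system whose only nonzero initial value is $T=1$, and your description of which partitions and subscript assignments can contribute is essentially the same bookkeeping the paper performs with its ``refined correlators''. The gap is in your third step, which you yourself flag as ``the real content of the proof'' but do not carry out, and which, as formulated, would fail. Your merge/split move does not preserve the product of $Q$-weights: merging a singleton $\{1\}$ with subscript $0$ (weight $Q_0(a_1)=1$) into a block $I$ carrying subscript $d$ replaces the factor $Q_{d+|I|-1}(a_{[I]})$ by $Q_{(d-1)+|I|+1-1}(a_{[I]}+a_1)=Q_{d+|I|-1}(a_{[I]}+a_1)$, which differs from $Q_{d+|I|-1}(a_{[I]})$ as a polynomial in $a_1$. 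The multiplicativity identity you invoke only holds under the half-integer constraint $a+b+m+m'=-\tfrac32$, which is not satisfied by the integer primary fields occurring here, so it cannot be used to match these weights. Consequently the contributing configurations do not cancel in weight-preserving, sign-reversing pairs for generic $\vec a$; the vanishing is not configuration-by-configuration but only holds after summation, as a polynomial identity.

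This is exactly the difficulty the paper's proof is built around. After isolating the factor attached to a single block with negative $d_1$ (the third line of~\eqref{eq:exprderE}), the paper observes that it is a symmetric polynomial in the $a_p$, $p\in I_1$, of degree $2(d_1+|I_1|-1)<2|I_1|$, so it suffices to show it vanishes identically when any one variable is set to $0$ or to $-\tfrac12$ (divisibility by $\prod_p a_p\big(a_p+\tfrac12\big)$ would otherwise force too high a degree). At $a_1=0$ the $Q$-weights genuinely coincide and the derivative of the refined string equation~\eqref{eq:derrefstring} produces the cancellation you were hoping for; at $a_1=-\tfrac12$ one needs the extra identity $Q_p(a)-Q_p\big(a-\tfrac12\big)=-\tfrac{a}{2}\,Q_{p-1}(a-1)$ together with an induction on $|I_1|$ (equation~\eqref{eq:12reduction}). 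Without something playing the role of this evaluation-plus-degree argument (or a genuinely new identity replacing it), your alternating binomial sums do not telescope, so the proposal as it stands does not prove the proposition.
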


\begin{proof} Assume $d_1,\dots,d_\ell$ are negative, and the rest of the indices $d_i$ are positive. Let us fix $I_1\sqcup\cdots\sqcup I_k\subset \llbracket n \rrbracket$ that satisfy the condition $|I_i|+d_i-1\geq 0$ for any $i=1,\dots,\ell$.

Consider all terms in the expression $\mathcal{E}$ satisfying the following conditions:
\begin{itemize}\itemsep=0pt
\item The correlator factor is a coefficient $\big\langle \prod_{i=1}^k \prod_{j=1}^{m_i} \tau_{d_{ij}} \big\rangle_g^\bullet$ such that
\begin{itemize}\itemsep=0pt
\item for each $i=1,\dots,k$, we have $\sum_{j=1}^{m_i} d_{ij} = d_i + m_i- 1$;
\item for each $i=1,\dots,\ell$, at most one of $d_{ij}$, $j=1,\dots, m_i$ is negative. It is at least $ d_i$, and at least $ d_i+1$ if $m_i \geq 2$ (this ensures there exists a zero index to use the string equation);
\item for each $i=\ell+1,\dots,k$, all $d_{ij}$, $j=1,\dots, m_i$ are non-negative. Moreover, one index must be at least $d_i$, and at least $d_i+1$ if $m_i\geq 2$.
\end{itemize}
This list of conditions is equivalent to
$\frac{\partial \big\langle \prod_{i=1}^k \prod_{j=1}^{m_i} \tau_{d_{ij}} \big\rangle_g^\bullet}{\partial \langle \tau_{d_1}\cdots \tau_{d_k}\rangle_g^\bullet}
\not= 0$. In other words, the correlator in the denominator can be deduced from the one in the numerator via successive applications of the string equation.
\item The sets $I_{ij}$ satisfy $\sqcup_{j=1}^{m_i} I_{ij} = I_i$ for each $i=1,\dots,\ell$.
\item For each $i=1,\dots,k$ the sets $I_{ij}$ are arranged in such a way that $\min(I_{ij})<\min(I_{ij'})$ if and only if $j<j'$ (this condition is necessary to have control on the combinatorial factor).
\end{itemize}
We can refine (\ref{eq:expression}) as follows: we define ``refined correlators'' $\big\langle\prod_{i=1}^k \tau_{d_i} (J_i)\big\rangle_g^\bullet$, now depending formally on subsets $J_i \subset \llbracket n \rrbracket$, and subject to a natural refinement of the string equation
\begin{gather*}
\langle\tau_0(J_{k+1})\prod_{i=1}^k \tau_{d_i} (J_i)\rangle_g^\bullet =
\sum_{j=1}^k \langle\tau_{d_j-1}(J_j\sqcup J_{k+1})\prod_{\substack{i=1\\ i\not=j}}^k \tau_{d_i} (J_i)\rangle_g^\bullet .
\end{gather*}
We then define $\mathcal{E}^\textup{ref}_{g,n}(\vec{a})$ to be
\begin{gather*}
\mathcal{E}^\textup{ref}_{g,n}(\vec{a}) \coloneqq
 \sum_{k=1}^n \frac{(-1)^k}{k!}
\sum_{\substack{I_1\sqcup\cdots\sqcup I_k = \llbracket n \rrbracket\\ I_j\neq \varnothing, \, \forall\, j\in \llbracket k \rrbracket}}
\sum_{\substack{d_1,\dots,d_k \in\mathbb{Z} \\ d_1+\cdots+d_k=g-2+k}}
\langle \tau_{d_1}(I_1)\cdots \tau_{d_k}(I_k)\rangle_g^\bullet\cdot\prod_{j=1}^k Q_{d_j+|I_j|-1}(a_{[I_j]}).
\end{gather*}
Clearly, $\mathcal{E}^\textup{ref}_{g,n}(\vec{a})$ reduces to $\mathcal{E}_{g,n}(\vec{a})$ after setting $\tau_d (I) \to \tau_d$.

Using this notation, if we fix $m_i$, $d_{ij}$ and $I_{ij}$ for $i>1$, and let $m_1$, $d_{1j}$, and $I_{1j}$ vary in all possible ways such that the conditions above are satisfied, we can split the derivative $
{\partial \langle \prod_{i=1}^k \prod_{j=1}^{m_i} \tau_{d_{ij}} \rangle_g^\bullet}/
{\partial \langle \tau_{d_1}\cdots \tau_{d_k}\rangle_g^\bullet}$ into the sum of ``refined derivatives''
\begin{gather*}
 \frac
{\partial \Big\langle \prod\limits_{i=1}^k \prod\limits_{j=1}^{m_i} \tau_{d_{ij}}(I_{ij}) \Big\rangle_g^\bullet}
{\partial \Big\langle \prod\limits_{i=1}^k \tau_{d_i}(I_i)\Big\rangle_g^\bullet}
= \frac
{\partial \Big\langle \prod\limits_{i=1}^k \prod\limits_{j=1}^{m_i} \tau_{d_{ij}}(I_{ij}) \Big\rangle_g^\bullet}
{\partial \Big\langle \tau_{d_1}(I_1) \prod\limits_{i=2}^k \prod\limits_{j=1}^{m_i} \tau_{d_{ij}}(I_{ij}) \Big\rangle_g^\bullet}
\frac
{\partial \Big\langle\tau_{d_1}(I_1) \prod\limits_{i=2}^k \prod\limits_{j=1}^{m_i} \tau_{d_{ij}}(I_{ij}) \Big\rangle_g^\bullet}
{\partial \Big\langle \prod\limits_{i=1}^k \tau_{d_i}(I_i)\Big\rangle_g^\bullet} .
\end{gather*}
The derivative is clearly zero if the partition $\{I_{ij}\}$ is not a refinement of the partition $\{I_i\}$.

Thus we obtain the following expression for the derivative of $\mathcal{E}_{g,n}(\vec{a})$:
\begin{gather}
\frac{\partial \mathcal{E}_{g,n}(\vec{a})}{\partial \langle \tau_{d_1}\cdots \tau_{d_k}\rangle_g^\bullet} = \sum_{I_1\sqcup\cdots\sqcup I_k= \llbracket n \rrbracket} \frac{\partial \mathcal{E}^\textup{ref}_{g,n}(\vec{a})}{\partial \langle \tau_{d_1} (I_1) \dotsb \tau_{d_k}(I_k) \rangle^\bullet_g} \Bigg|_{\tau_d(I) = \tau_d}
\nonumber\\
= \frac{(-1)^k}{k!} \sum_{\substack{I_1\sqcup\cdots\sqcup I_k= \llbracket n \rrbracket\\ I_j\neq \varnothing, \forall\, j\in \llbracket k \rrbracket \\ m_i, d_{ij}, I_{ij} \text{ for } i\geq 2}} (-1)^{\sum\limits_{i=2}^k (m_i-1)}
\prod_{i=2}^k \prod_{j=1}^{m_i} Q_{d_{ij}+|I_{ij}|-1}(a_{[I_{ij}]}) \frac
{\partial \Big\langle \tau_{d_1}(I_1) \prod\limits_{i=2}^k \prod\limits_{j=1}^{m_i} \tau_{d_{ij}}(I_{ij}) \Big\rangle_g^\bullet}
{\partial \Big\langle \prod\limits_{i=1}^k \tau_{d_i}(I_i)\Big\rangle_g^\bullet} \nonumber\\
 \times \left(\sum_{m_1, d_{1j}, I_{1j}} (-1)^{ m_1-1} \prod_{j=1}^{m_1} Q_{d_{1j}+|I_{1j}|-1}(a_{[I_{1j}]}) \frac
{\partial \Big\langle \prod\limits_{i=1}^k \prod\limits_{j=1}^{m_i} \tau_{d_{ij}}(I_{ij}) \Big\rangle_g^\bullet}
{\partial \Big\langle \tau_{d_1}(I_1) \prod\limits_{i=2}^k \prod\limits_{j=1}^{m_i} \tau_{d_{ij}}(I_{ij}) \Big\rangle_g^\bullet}\right)\Bigg|_{\tau_d(I) = \tau_d} .\label{eq:exprderE}
\end{gather}
In order to prove the proposition, it is sufficient to show that the factor in the third line of this expression is always equal to zero. Note that this factor is a polynomial in the variables $a_p$, $p\in I_1$, of degree $2\left(d_1+m_1-1 + |I_1| - m_1\right) $. The degree of this polynomial is less than twice the number of its variables (since $d_1<0$). Therefore, in order to show the constant vanishing of this polynomial, it is sufficient to show that it constantly vanishes for two specific values of each of its variables, namely, at the points $a_p=0$ and $a_p=-1/2$ for each $p\in I_1$. Since this polynomial is symmetric in its variables, it is sufficient to prove this vanishing for just one variable.

We assume, for simplicity, that $1\in I_1$, and prove the vanishing for $a_1=0,-1/2$. In order to use the string equation, we split the terms in the third line of~\eqref{eq:exprderE} in two parts: those where $I_{1,1} = \{ 1\} $, and those where $ I_{1,1} \supsetneq \{ 1\} $ (as $ 1 \in I_{1,1} $ by the third bullet of conditions). The first part is parametrised by partitions $ I_{1,2} \sqcup \dotsb \sqcup I_{1,m_1} = I_1 \setminus \{ 1\} $, and the second part can be reparametrised by the same partitions, plus a choice of one of these sets which should also contain $1$. Hence, up to a common sign factor, we can split the third line of~\eqref{eq:exprderE} as terms of the form
\begin{gather} \label{eq:vanishing}
Q_{d_{11}+1-1}(a_1)\prod_{j=2}^{m_1} Q_{d_{1j}+|I_{1j}|-1}(a_{[I_{1j}]}) \frac
{\partial \Big\langle \prod\limits_{i=1}^k \prod\limits_{j=1}^{m_i} \tau_{d_{ij}}(I_{ij}) \Big\rangle_g^\bullet}
{\partial \Big\langle \tau_{d_1}(I_1) \prod\limits_{i=2}^k \prod\limits_{j=1}^{m_i} \tau_{d_{ij}}(I_{ij}) \Big\rangle_g^\bullet}
\\
-\sum_{r=2}^{m_1} \prod\limits_{j=2}^{m_1} Q_{d_{1j}+|I_{1j}|-1}(a_{[I_{1j}]} + a_1 \delta_{j,r})
\frac
{\partial \Big\langle \tau_{d_{1r}-1}(I_{1r}\sqcup\{1\})\prod\limits_{\substack{j=2\\ j\not=r}}^{m_1} \tau_{d_{1j}}(I_{1j})\prod\limits_{i=2}^k \prod\limits_{j=1}^{m_i} \tau_{d_{ij}}(I_{ij}) \Big\rangle_g^\bullet}
{\partial \Big\langle \tau_{d_1}(I_1) \prod\limits_{i=2}^k \prod\limits_{j=1}^{m_i} \tau_{d_{ij}}(I_{ij}) \Big\rangle_g^\bullet} .\nonumber
\end{gather}
In both the cases $a_1 = 0 $ and $a_1 = -1/2$, $d_{11}$ must be equal to zero here (otherwise $Q_{d_{11}}(a_1)=0$ in the first term and the indices $d_{ij}$ in the other terms do not add up to $d_i +m_i-1$). Then, for $a_1=0$ all $Q$-coefficients in~\eqref{eq:vanishing} are literally the same, so it vanishes using the following derivative of the refined string equation
\begin{gather}
\frac
{\partial \Big\langle \tau_0(\{ 1\} ) \prod\limits_{\substack{j=2}}^{m_1} \tau_{d_{1j}}(I_{1j})
\prod\limits_{i=2}^k \prod\limits_{j=1}^{m_i} \tau_{d_{ij}}(I_{ij}) \Big\rangle_g^\bullet}
{\partial \Big\langle \tau_{d_1}(I_1) \prod\limits_{i=2}^k \prod\limits_{j=1}^{m_i} \tau_{d_{ij}}(I_{ij}) \Big\rangle_g^\bullet}\nonumber\\
 \qquad {}=
\sum_{r=2}^{m_1}\frac
{\partial \Big\langle \tau_{d_{1r}-1}(I_{1r}\sqcup\{1\})\prod\limits_{\substack{j=2\\ j\not=r}}^{m_1} \tau_{d_{1j}}(I_{1j})\prod\limits_{i=2}^k \prod\limits_{j=1}^{m_i} \tau_{d_{ij}}(I_{ij}) \Big\rangle_g^\bullet}
{\partial \Big\langle \tau_{d_1}(I_1) \prod\limits_{i=2}^k \prod\limits_{j=1}^{m_i} \tau_{d_{ij}}(I_{ij}) \Big\rangle_g^\bullet} .\label{eq:derrefstring}
\end{gather}

The case $a_1=-1/2$ is more subtle. We use induction on $|I_1|$. Using the identity $Q_p(a)-Q_p(a-1/2) = - (a/2) \cdot Q_{p-1}(a-1)$ and the derivative of the refined string equation~\eqref{eq:derrefstring}, we can rewrite expression~\eqref{eq:vanishing} as
\begin{gather}\label{eq:12reduction}
-\frac{1}{2} \sum_{k=2}^{m_1} a_{[I_{1k}]} \cdot \prod\limits_{\substack{j=2}}^{m_1} Q_{\tilde d_{1j}+|I_{1j}|-1}(\tilde a_{[I_{1j}]})
\frac
{\partial \Big\langle \prod\limits_{\substack{j=2}}^{m_1} \tau_{\tilde d_{1j}}(I_{1j})\prod\limits_{i=2}^k \prod\limits_{j=1}^{m_i} \tau_{d_{ij}}(I_{ij}) \Big\rangle_g^\bullet}
{\partial \Big\langle \tau_{d_1}(I_1\setminus\{1\}) \prod\limits_{i=2}^k \prod\limits_{j=1}^{m_i} \tau_{d_{ij}}(I_{ij}) \Big\rangle_g^\bullet} ,
\end{gather}
where in the coefficient of $a_k$ we use the notation $\tilde a_{[I_{1p}]} \coloneqq a_{[I_{1p}]}-\delta_{pk}$, $p = 2,\dotsc,m_1 $, and $\tilde d_{1q} \coloneqq d_{1q} - \delta_{kq}$, $q=2,\dots,m_1$.

By our assumptions, $ d_1<0$ and $ |I_1| + d_1 -1\geq 0$, so $|I_1|\geq 2$. For the base case of the induction, $|I_1| = 2$, we then have $d_1 = -1$, so $ d_{11} + d_{12} = d_1+m_1-1 = 0$, and therefore $ d_{12} = 0$. In this case, the $k$-sum and $j$-product in equation~\eqref{eq:12reduction} collapse, yielding a coefficient $ Q_{\tilde d_{12}+|I_{12}|-1}(\tilde a_{I_{[12]}}) = Q_{-1}(a_{I_{[12]}}-1) = 0$, proving the basis step.

For the induction step, we see that for each $k$ the coefficient of $a_{[I_{1k}]}$ in~\eqref{eq:12reduction} is the polynomial in one fewer variables (namely, taking out the variable $a_1$) and size of $|I_1|$ one less (by remo\-ving~$1$) of exactly the same form as the summands in the third line of~\eqref{eq:exprderE}. Resumming over~$m_1$,~$d_{1j}$, and~$I_{1,j}$ with the sign coming from the third line of~\eqref{eq:exprderE}, this becomes equal to the third line of~\eqref{eq:exprderE}, which was zero by induction (where we use already that this third line vanishes for all $ a_i =0, -1/2$).

Thus, the third line in~\eqref{eq:exprderE} is equal to zero for $a_1=-1/2$ as well as $a_1 = 0$, which implies it vanishes constantly. This implies the proposition.
\end{proof}

\subsection[Applying formal negative degrees of $\psi$-classes to the combinatorial identity]{Applying formal negative degrees of $\boldsymbol{\psi}$-classes\\ to the combinatorial identity}

We use the result of the previous section to reduce the system of identities~\eqref{eq:keyeq0} to a simpler one.

\begin{Proposition} \label{prop:combequiv} Faber's conjecture $($Theorem {\rm \ref{thm:Faber})} is equivalent to the following system of combinatorial identities.

For any $g\geq 2$ and $n\geq 2$, for any $a_1,\dots,a_n\in\mathbb{Z}_{\geq 0}$, $a_1+\cdots+a_n=2g-3+n$,
\begin{gather} \label{eq:keyeq1}
0 =\sum_{k=1}^n \frac{(-1)^k}{k!}
\sum_{\substack{I_1\sqcup\cdots\sqcup I_k = \llbracket n \rrbracket\\ I_j\neq \varnothing, \forall\, j\in \llbracket k \rrbracket}}
\sum_{\substack{d_1,\dots,d_k \in\mathbb{Z} \\ d_1+\cdots+d_k=g-2+k}}
\frac{(2g-3+k)!}{\prod\limits_{i=1}^k (2d_i-1)!!}\cdot\prod_{j=1}^k Q_{d_j+|I_j|-1}(a_{[I_j]}).
\end{gather}
\end{Proposition}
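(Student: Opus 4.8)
The plan is to realise both \eqref{eq:keyeq0} and \eqref{eq:keyeq1} as the vanishing of the polynomial $\mathcal{E}_{g,n}(\vec{a})$ of \eqref{eq:expression} for two particular admissible systems of correlators (that is, systems that are symmetric and satisfy the string equation), and then to use Proposition~\ref{prop:derivative} to show that these two polynomials in $a_1,\dots,a_n$ are literally equal.

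First I would identify the two sides. Let $\mathcal{E}^{(1)}_{g,n}(\vec{a})$ be \eqref{eq:expression} built from the system of Example~\ref{example:1}: since that system vanishes on every correlator with a negative index, enlarging the summation range from $d_i\in\mathbb{Z}_{\geq 0}$ to $d_i\in\mathbb{Z}$ is harmless, and on non-negative indices the system equals $\langle\tau_{d_1}\cdots\tau_{d_k}\rangle_g^{\star}$; hence \eqref{eq:keyeq0} is exactly $\mathcal{E}^{(1)}_{g,n}(\vec{a})=0$. Let $\mathcal{E}^{(2)}_{g,n}(\vec{a})$ be \eqref{eq:expression} built from the system of Example~\ref{example:2}, $\langle\tau_{d_1}\cdots\tau_{d_k}\rangle_g^\bullet=(2g-3+k)!/\prod_{i=1}^k(2d_i-1)!!$; then \eqref{eq:keyeq1} is exactly $\mathcal{E}^{(2)}_{g,n}(\vec{a})=0$. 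Both systems are symmetric and satisfy the string equation (Examples~\ref{example:1} and~\ref{example:2}), so Proposition~\ref{prop:derivative} applies to each.

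Next I would extract the structural consequence of Proposition~\ref{prop:derivative}. Viewed as in \eqref{eq:expression}, $\mathcal{E}_{g,n}(\vec{a})$ is a linear function of the ``free'' correlators $\langle\tau_{d_1}\cdots\tau_{d_k}\rangle_g^\bullet$ with $d_1\cdots d_k\neq 0$, and its coefficients are polynomials in $\vec{a}$ that do not depend on the chosen system: they are assembled only from the $Q$-factors and from the (system-independent) rule by which zero-index correlators are expanded via the string equation. Proposition~\ref{prop:derivative} asserts that the coefficient of every free correlator having a negative index is the zero polynomial; and for $g\geq 2$ the index sum $d_1+\dots+d_k=g-2+k$ is strictly positive, so no free correlator can have all indices negative. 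Therefore there are universal polynomials $P_{d_1,\dots,d_k}(\vec{a})$, indexed by $1\leq k\leq n$ and tuples with all $d_i\geq 1$, $d_1+\dots+d_k=g-2+k$, such that for every admissible system
\begin{gather*}
\mathcal{E}_{g,n}(\vec{a})=\sum_{k=1}^n\ \sum_{\substack{d_1,\dots,d_k\geq 1\\ d_1+\dots+d_k=g-2+k}} P_{d_1,\dots,d_k}(\vec{a})\cdot\langle\tau_{d_1}\cdots\tau_{d_k}\rangle_g^\bullet .
\end{gather*}
Finally, the systems of Examples~\ref{example:1} and~\ref{example:2} take the same value $(2g-3+k)!/\prod_{i=1}^k(2d_i-1)!!$ on every correlator with all indices positive, so substituting into the displayed identity yields $\mathcal{E}^{(1)}_{g,n}(\vec{a})=\mathcal{E}^{(2)}_{g,n}(\vec{a})$ as polynomials in $\vec{a}$, hence at every admissible integer point. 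Thus \eqref{eq:keyeq0} holds for all $a_1,\dots,a_n$ if and only if \eqref{eq:keyeq1} does, and combining this with the Corollary relating \eqref{eq:keyeq0} to Theorem~\ref{thm:Faber} finishes the proof.

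The substantive input is Proposition~\ref{prop:derivative}, which is already proved; the rest is bookkeeping. The two points that need care are: checking that \eqref{eq:keyeq0} is literally $\mathcal{E}^{(1)}_{g,n}=0$, which hinges on the vanishing of negative-index correlators in the Example~\ref{example:1} system so that the mismatch of summation ranges is immaterial; and making precise that the polynomials $P_{d_1,\dots,d_k}(\vec{a})$ are genuinely system-independent, that is, that the dependence of $\mathcal{E}_{g,n}(\vec{a})$ on the choice of string-equation system enters only through the values of the free correlators, so that agreement of the two systems on the all-positive correlators that survive Proposition~\ref{prop:derivative} is all that is needed.
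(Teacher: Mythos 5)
Your proposal is correct and follows essentially the same route as the paper: both identities are recognized as the specialization of $\mathcal{E}_{g,n}(\vec{a})$ to the systems of Examples~\ref{example:1} and~\ref{example:2}, which share the same initial values on all-positive indices, and Proposition~\ref{prop:derivative} shows the negative-index initial values do not affect $\mathcal{E}_{g,n}(\vec{a})$, so the two specializations coincide. Your extra care about the summation range $d_i\in\mathbb{Z}_{\geq 0}$ versus $d_i\in\mathbb{Z}$ and about the system-independence of the coefficient polynomials only makes explicit what the paper leaves implicit.
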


\begin{proof} We have already shown that Faber's conjecture is equivalent to the system of identities~\eqref{eq:keyeq0}, where the correlators are replaced by the predicted value from the conjecture. So, it is sufficient to show that the system of identities~\eqref{eq:keyeq0} is equivalent to the system of identities~\eqref{eq:keyeq1}. Note that the right hand side of~\eqref{eq:keyeq0} is a specialization of the expression $\mathcal{E}_{g,n}(\vec{a})$ for the values of $\langle \tau_{d_1}\cdots \tau_{d_k}\rangle_g^\bullet$ given in Example~\ref{example:1}. The right hand side of~\eqref{eq:keyeq1} is a specialization of the expression $\mathcal{E}_{g,n}(\vec{a})$ for the values of $\langle \tau_{d_1}\cdots \tau_{d_k}\rangle_g^\bullet$ given in Example~\ref{example:2}.

As observed before, a system of numbers $\langle \tau_{d_1}\cdots \tau_{d_k}\rangle_g^\bullet$ satisfying the string equation, see Definition~\ref{def:stringequation}, is fixed by choosing all numbers for $ \prod_{i=1}^k d_i \neq 0$ arbitrarily and inferring the other cases from the string equation. We call the chosen numbers initial values.

The initial values of the system of numbers $\langle \tau_{d_1}\cdots \tau_{d_k}\rangle_g^\bullet$ given in Examples~\ref{example:1} and~\ref{example:2} coincide for all $d_1,\dots,d_k>0$ and differ when at least one of $d_i$'s is negative. Proposition~\ref{prop:derivative} implies that the initial values $\langle \tau_{d_1}\cdots \tau_{d_k}\rangle_g^\bullet$ with at least one $d_i$ negative have no impact on the value of $\mathcal{E}_{g,n}(\vec{a})$. Therefore, a specialization of the expression $\mathcal{E}_{g,n}(\vec{a})$ for the values of $\langle \tau_{d_1}\cdots \tau_{d_k}\rangle_g^\bullet$ given in Example~\ref{example:1} is equal to zero if and only if the same specialization of the expression $\mathcal{E}_{g,n}(\vec{a})$ for the values of $\langle \tau_{d_1}\cdots \tau_{d_k}\rangle_g^\bullet$ given in Example~\ref{example:2} is equal to zero. Therefore, the system of identities~\eqref{eq:keyeq0} is equivalent to the system of identities~\eqref{eq:keyeq1}.
\end{proof}

\section{The main combinatorial identity and its structure}\label{sec:CombStruc}

In the previous section we used formal negative degree psi-classes in order to simplify the system of combinatorial identities to which Faber's conjecture is equivalent (Proposition~\ref{prop:combequiv}). We now want to substitute the $Q$-polynomials by their definition and rearrange the terms to obtain the following statement.

\begin{Corollary}[of Proposition~\ref{prop:combequiv}]\label{cor:identity}
Faber's conjecture $($Theorem {\rm \ref{thm:Faber})} is equivalent to the following system of combinatorial identities.

For any $g\geq 2$ and $n\geq 2$, for any $a_1,\dots,a_n\in\mathbb{Z}_{\geq 0}$, $a_1+\cdots+a_n=2g-3+n$, we have
\begin{gather}
 0 =
\sum_{k=1}^n \frac{(-1)^k(2g-3+k)!}{k!}\nonumber\\
\hphantom{0 =}{}\times
\sum_{\substack{I_1\sqcup\cdots\sqcup I_k = \llbracket n \rrbracket\\ I_j\neq \varnothing, \forall\, j\in \llbracket k \rrbracket}}
\sum_{\substack{d_1,\dots,d_k \in\mathbb{Z}_{\geq 0} \\ d_1+\cdots+d_k=g-2+n}}
\prod_{j=1}^k \binom{2a_{[I_j]}+1}{2d_j} \frac{(2d_j-1)!!}{(2d_j+1-2|I_j|)!!}.\label{eq:keyeq}
\end{gather}

Here by $a_{[I_j]}$ we denote $\sum_{\ell\in I_j} a_\ell$ and by $|I_j|$ we denote the cardinality of the set $I_j\subset \{1,\dots,n\}$, $j=1,\dots,k$.
\end{Corollary}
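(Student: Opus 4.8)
The plan is to derive \eqref{eq:keyeq} from \eqref{eq:keyeq1} by a purely formal rewriting: substitute the definition of the $Q$-polynomials, reindex the inner sum, and match powers of the variables $a_{[I_j]}$. Concretely, I would start from the statement of Proposition~\ref{prop:combequiv} and work term-by-term on a fixed partition $I_1 \sqcup \cdots \sqcup I_k$. Since the correlator factor $(2g-3+k)!/\prod_i (2d_i-1)!!$ depends only on the $d_i$ (not on the sets $I_j$), the whole manipulation factorizes over $j$, and it suffices to understand, for a single set $I$ of size $m := |I|$ and single index $d \in \Z$ with $d + m - 1 \geq 0$, the quantity $Q_{d+m-1}(a_{[I]})/(2d-1)!!$ and rewrite it in terms of a binomial coefficient $\binom{2a_{[I]}+1}{2d'}$ with an appropriate shifted index $d' = d + m - 1$.

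The key computation is the following elementary identity for the $Q$-polynomials. By definition
\begin{gather*}
Q_M(a) = \frac{(-1)^M}{2^M M!}\prod_{k=1}^{2M}\Bigl(a+1-\frac{k}{2}\Bigr)
= \frac{(-1)^M}{2^M M!}\prod_{k=1}^{2M}\frac{2a+2-k}{2}
= \frac{(-1)^M}{4^M M!}\prod_{k=1}^{2M}(2a+2-k),
\end{gather*}
and the product $\prod_{k=1}^{2M}(2a+2-k) = (2a+1)(2a)\cdots(2a+2-2M)$ is a falling factorial of length $2M$ starting at $2a+1$, hence equals $(2a+1)!/(2a+1-2M)!$ in the usual gamma-function sense. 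Thus, up to the explicit sign and powers of $2$, $Q_M(a)$ is $\binom{2a+1}{2M}(2M)!/(4^M M!) = \binom{2a+1}{2M}(2M-1)!!/2^M$. Writing $M = d + |I| - 1$ and collecting the sign $(-1)^{\sum_j M_j}$ together with $(-1)^k$ and the powers $2^{\sum M_j}$ against the prefactor and the constraint $\sum d_j = g-2+k$ versus $\sum M_j = g - 2 + n$ (which accounts for shifting the degree constraint in the inner sum from $g-2+k$ to $g-2+n$), one lands precisely on the summand $\binom{2a_{[I_j]}+1}{2d_j}\frac{(2d_j-1)!!}{(2d_j+1-2|I_j|)!!}$ after renaming $d_j \mapsto d_j$ in the shifted variable; note that $(2d-1)!!/(2M - 2|I|+1)!! = (2d-1)!!/(2d_{\mathrm{new}} + 1 - 2|I|)!!$ with $d_{\mathrm{new}} = M$, which is exactly the claimed factor, and $Q_M \equiv 0$ for $M < 0$ reproduces the convention that makes the $d_j \in \Z_{\geq 0}$ range correct.

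The main obstacle — really the only place requiring care — is bookkeeping of signs, powers of two, and the index shift so that every stray factor cancels and the degree constraint transforms correctly from $d_1 + \cdots + d_k = g-2+k$ in \eqref{eq:keyeq1} to $d_1 + \cdots + d_k = g-2+n$ in \eqref{eq:keyeq}; the shift is by $\sum_j(|I_j|-1) = n-k$, which is why the total changes by $n - k$ and why the surviving factor $(2g-3+k)!/k!$ (coming from the correlator and the $1/k!$) must be pulled outside the partition sum. Once the per-set identity $Q_M(a) = (-1)^M 2^{-M}\binom{2a+1}{2M}(2M-1)!!$ is established and the global sign $(-1)^{k + \sum_j M_j} = (-1)^{k + (g-2+n)}$ is seen to be an overall constant (absorbable since the right side is $0$) while the powers $2^{-\sum M_j}$ likewise factor out as a nonzero overall constant, equations \eqref{eq:keyeq1} and \eqref{eq:keyeq} are literally the same identity, and the Corollary follows from Proposition~\ref{prop:combequiv}.
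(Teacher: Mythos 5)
Your proposal is essentially the paper's own proof: rewrite $Q_m(a)$ for non-negative integer argument as a binomial coefficient times a double factorial times a sign and a power of two, relabel $d_j+|I_j|-1 \mapsto d_j$ (which shifts the degree constraint from $g-2+k$ to $g-2+n$ by $\sum_j(|I_j|-1)=n-k$), and divide out the resulting overall nonzero constant. One small arithmetic slip: converting each factor $a+1-\tfrac{k}{2}=\tfrac{2a+2-k}{2}$ gives $Q_M(a)=\frac{(-1)^M}{2^{3M}M!}\prod_{k=1}^{2M}(2a+2-k)=(-1)^M\,2^{-2M}\binom{2a+1}{2M}(2M-1)!!$, not $(-1)^M 2^{-M}\binom{2a+1}{2M}(2M-1)!!$, but since the power of two only contributes the fixed overall factor $2^{-2(g-2+n)}$ this does not affect the argument.
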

\begin{proof}Recall that, for $m\geq 0$, $Q_m(a) = ((-1)^m / 2^{3m}m!) \cdot \prod_{i=0}^{2m-1} (2a+1-i)$. If its argument is a non-negative integer, we can rewrite $Q_m(a)$ as $\binom{2a+1}{2m}\cdot (2m-1)!! \cdot (-1)^m/2^{2m}$. For $m<0$, $Q_m(a)\equiv 0$. Then it is easy to see that equation~\eqref{eq:keyeq} is obtained from equation~\eqref{eq:keyeq1} by the relabelling $d_j + |I_j|-1 \rightsquigarrow d_j$ and dividing by a common factor of $(-1)^{g-2+n}/2^{2(g-2+n)}$.
\end{proof}

In the rest of the paper we refer to equation (\ref{eq:keyeq}) as the main combinatorial identity. This section is devoted to a purely combinatorial analysis of this identity. Clearly, since Faber's conjecture is proved, the main combinatorial identity holds true. However, we are interested in an independent proof of it, in order to obtain a new proof of Faber's conjecture. We produce such a proof for $n \leq 5$ in the next section.

\begin{Question} \label{ques:mainquestion}
Is there a purely combinatorial way to prove the combinatorial identity \eqref{eq:keyeq} for all $n$?
\end{Question}

In fact, one of the purposes of the present article is to pose the question to the combinatorial community about a possible enumerative interpretation of our identity.

\subsection{Polynomials vanishing in the integer points of some simplices}
We denote the right hand side of~\eqref{eq:keyeq} by $P(a_1,\dots,a_n)$. It can be considered as a polynomial of degree $2g-4+2n$ in $a_1,\dots,a_n$ (since the binomial coefficient $\binom{2a_{[I]}+1}{2d}$ is naturally a polynomial of degree $2d$ in $a_{[I]}$). We can also rewrite it as
\begin{gather*}
 \sum_{k=1}^n \frac{(-1)^k(2g-3+k)!}{k!}
\!\!\!\!\!\!\sum_{\substack{I_1\sqcup\cdots\sqcup I_k = \llbracket n \rrbracket\\ I_j\neq \varnothing, \,\forall\, j\in \llbracket k \rrbracket}}
\!\sum_{\substack{f_1,\dots,f_k \in\mathbb{Z}_{\geq 0} \\ f_1+\cdots+f_k=g-1}}
\!\prod_{j=1}^k \binom{2a_{[I_j]}+1}{2f_j+1} \frac{(2a_{[I_j]}-2f_j-1)!!}{(2a_{[I_j]}-2f_j+1-2|I_j|)!!},
\end{gather*}
and refer to it as $R(a_1,\dots,a_n)$. The function $R(a_1,\dots,a_n)$ is also a polynomial in $a_1,\dots,a_n$, where each term in the sum over $k=1,\dots,n$ has degree $\sum_{j=1}^k 2f_j+|I_j| = 2g-2+n$, so the total degree of $R$ is $2g-2+n$. Note that $P\not=R$ (they even have different degrees); from the construction they coincide only on the simplex $a_1,\dots,a_n\in\mathbb{Z}_{\geq 0}$, $a_1+\cdots+a_n=2g-3+n$.

\begin{Proposition}
We have $P|_{a_i=0}\equiv 0$, $i=1,\dots,n$.
\end{Proposition}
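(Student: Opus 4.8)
The plan is to evaluate $P(a_1,\dots,a_n)$ at $a_i=0$ and show that the resulting polynomial in the remaining variables vanishes identically. By symmetry it suffices to set $a_n=0$. When $a_n=0$, the sum constraint becomes $a_1+\cdots+a_{n-1}=2g-3+n$, and in the definition of $P$ the variable $a_n$ enters only through the block $I_j$ that contains the index $n$: for that block, $a_{[I_j]}$ equals $a_{[I_j\setminus\{n\}]}$, while the cardinality $|I_j|$ is one larger than $|I_j\setminus\{n\}|$. So the natural approach is to partition the sum over set-partitions $I_1\sqcup\cdots\sqcup I_k=\llbracket n\rrbracket$ according to which block contains $n$ and whether that block is the singleton $\{n\}$ or has size $\geq 2$, and then try to recognize a telescoping/cancellation between the ``$\{n\}$ is its own block'' terms (with $k$ blocks total, one of which is the singleton, so effectively a partition of $\llbracket n-1\rrbracket$ into $k-1$ blocks) and the ``$n$ is absorbed into a larger block'' terms.

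First I would write, for a fixed partition $J_1\sqcup\cdots\sqcup J_{k}=\llbracket n-1\rrbracket$ into nonempty blocks, the two families of contributions to $P|_{a_n=0}$ that it generates: (i) taking $\{n\}$ as an extra $(k{+}1)$-st block — this contributes, after summing over the $\psi$-degree $d$ sitting on the singleton, a factor $\sum_{d\geq 0}\binom{1}{2d}\frac{(2d-1)!!}{(2d-1)!!}$ restricted to the allowed range, which collapses since $\binom{1}{2d}=0$ for $d\geq 1$, leaving only $d=0$ and hence the factor $\binom{1}{0}\cdot\frac{(-1)!!}{(-1)!!}=1$ (one must be slightly careful that $|I_j|=1$ makes the ratio $(2d_j-1)!!/(2d_j+1-2|I_j|)!!=(2d-1)!!/(2d-1)!!=1$); (ii) appending $n$ to $J_r$ for some $r\in\{1,\dots,k\}$, which replaces the $r$-th local factor $\binom{2a_{[J_r]}+1}{2d_r}\frac{(2d_r-1)!!}{(2d_r+1-2|J_r|)!!}$ by $\binom{2a_{[J_r]}+1}{2d_r}\frac{(2d_r-1)!!}{(2d_r-1-2|J_r|)!!}$ (cardinality up by one). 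The combinatorial prefactor changes too: in case (i) we have $k{+}1$ blocks with prefactor $\frac{(-1)^{k+1}(2g-3+(k+1))!}{(k+1)!}$, while in case (ii) we have $k$ blocks with prefactor $\frac{(-1)^{k}(2g-3+k)!}{k!}$, and there are $k$ choices of $r$ but a $1/k!$ versus $1/(k+1)!$ normalization to track.

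The heart of the argument is then an identity, for each fixed $r$ and fixed values of the $d_j$ for $j\neq r$, between $\sum_{d_r}$ of the case-(ii) local factor and the single case-(i) term, using the hockey-stick/Vandermonde-type identity
\begin{gather*}
\sum_{d\geq 0}\binom{2a+1}{2d}\left(\frac{(2d-1)!!}{(2d-1-2m)!!}-\frac{(2d-1)!!}{(2d+1-2m)!!}\right)
\end{gather*}
telescoping appropriately (here $m=|J_r|$), so that summing over $r=1,\dots,k$ and over the partition $\{J_1,\dots,J_k\}$ exactly cancels the case-(i) total after accounting for the $(-1)$ and factorial prefactor bookkeeping; note the total-degree constraint $d_1+\cdots+d_k=g-2+n$ shifts by one ($g-2+n\rightsquigarrow g-2+(n-1)+1$) which is precisely compatible since one block lost a point. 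I expect the main obstacle to be precisely this bookkeeping: matching the signs $(-1)^k$ vs.\ $(-1)^{k+1}$, the ratios $(2g-3+k)!/k!$ vs.\ $(2g-3+k+1)!/(k+1)!$ (which introduce an extra factor $(2g-2+k)/(k+1)$ that must be absorbed by the count of insertion positions and the double-factorial telescoping), and handling the degenerate double factorials $(2d_j+1-2|I_j|)!!$ when the arguments are negative odd integers, where one should consistently use the convention making $Q_m(a)$ polynomial. An alternative, possibly cleaner, route is to bypass the combinatorics entirely: since $P$ agrees on the simplex $a_1+\cdots+a_n=2g-3+n$ with the value of the (true) combinatorial identity \eqref{eq:keyeq}, and $P|_{a_n=0}$ is a polynomial of the right degree in $n-1$ variables, one can instead argue via Corollary~\ref{cor:identity} together with the string-equation structure of Proposition~\ref{prop:derivative} that setting $a_n=0$ corresponds to the forgetful pushforward and reproduces the $n-1$-variable identity; I would check whether the cleanest writeup uses this geometric/inductive reading rather than the raw double-factorial manipulation.
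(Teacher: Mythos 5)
Your decomposition is the same as the paper's: fix a partition $J_1\sqcup\cdots\sqcup J_k=\llbracket n-1\rrbracket$, compare the terms of $P(a_1,\dots,a_{n-1},0)$ in which $\{n\}$ forms its own block with those in which $n$ is absorbed into some $J_r$, and use $\binom{1}{2d}=0$ for $d\geq 1$ to collapse the singleton factor to $1$. But the decisive cancellation is left unproved, and the mechanism you propose for it does not apply. There is no sum over $d_r$ to telescope: with the other degrees fixed, the constraint $d_1+\cdots+d_k=g-2+n$ determines $d_r$, and in the singleton case only the degree $0$ survives at $a_n=0$, so your displayed ``hockey-stick'' sum over $d$ never occurs in the expression. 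The actual cancellation is pointwise in the degree tuple and in the partition: appending $n$ to $J_r$ multiplies the baseline factor by exactly $(2d_r+1-2|J_r|)$, since
\begin{gather*}
\frac{(2d_r-1)!!}{(2d_r-1-2|J_r|)!!}=(2d_r+1-2|J_r|)\cdot\frac{(2d_r-1)!!}{(2d_r+1-2|J_r|)!!},
\end{gather*}
and summing these multipliers over $r$ gives $\sum_{r=1}^k(2d_r+1-2|J_r|)=2(g-2+n)+k-2(n-1)=2g-2+k$, which is exactly what the singleton terms contribute with the opposite sign: the $k+1$ insertion positions turn their prefactor $(-1)^{k+1}(2g-2+k)!/(k+1)!$ into $-(2g-2+k)\cdot(-1)^{k}(2g-3+k)!/k!$. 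Since this linear identity holds for each partition and each degree tuple separately, and nowhere uses $a_1+\cdots+a_n=2g-3+n$, one gets $P|_{a_n=0}\equiv 0$ as a polynomial identity. Your write-up names all the ingredients (the factor $(2g-2+k)/(k+1)$, the insertion count, the double-factorial ratio) but never closes the loop, and the ``telescoping over $d_r$'' framing is the wrong closing move.

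Your fallback route is also not viable. The polynomial $P(a_1,\dots,a_{n-1},0)$ restricted to $a_1+\cdots+a_{n-1}=2g-3+n$ is not an instance of the $(n-1)$-variable identity \eqref{eq:keyeq}, which lives on the simplex $a_1+\cdots+a_{n-1}=2g-4+n$ and has degree sum $g-2+(n-1)$ rather than $g-2+n$; moreover the blocks containing $n$ carry a shifted cardinality, so no forgetful-pushforward reading identifies the two expressions. Even if it did, that would only give vanishing at integer points of a simplex, whereas the proposition asserts identical vanishing of the polynomial (and the degrees involved are too large for vanishing on the simplex to force identical vanishing). Finally, invoking the truth of \eqref{eq:keyeq} through Faber's theorem runs against the stated goal of an independent combinatorial proof.
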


\begin{proof}
Since $P$ is symmetric in its variables, it is enough to prove this proposition for $a_n=0$. Consider an arbitrary splitting $I_1\sqcup\cdots \sqcup I_{k} =\{1,\dots,n-1\}$. We want to append this splitting with the element~$n$: either we add $\{n\}$ as one of the sets (there are $k+1$ ways to do this, since we can choose the number of this set from $1$ to $k+1$ shifting the indices of $I_j$ accordingly), or we append $n$ to one of the existing sets $I_\ell$, $\ell=1,\dots k$. Consider the sum of all terms in $P$ that correspond to these choices of splitting of $\{1,\dots,n\}$. Since the first $k+1$ terms are all equal to each other, we can assume that we have $k+1$ copies of the case $I_{k+1} = \{n\}$ instead. Therefore, if we split the terms of $P$ in this way, we get summands of the following form:
\begin{gather*}
 (k+1) \frac{(-1)^{k+1}(2g-3+k+1)!}{(k+1)!}\!\!\!\! \sum_{\substack{d_1+\dots+d_{k+1}\\=g-2+n}} \prod_{j=1}^k \binom{2a_{[I_j]}+1}{2d_j} \frac{(2d_j-1)!!}{(2d_j+1-2|I_j|)!!} \cdot \binom{2a_n+1}{2d_{k+1}}\\
\qquad{} +
 \frac{(-1)^k(2g-3+k)!}{k!}\!\!\!\!\!\! \sum_{\substack{d_1+\dots+d_{k}\\=g-2+n}} \sum_{\ell=1}^k \prod_{j=1}^k \binom{2a_{[I_j]}+2a_n\delta_{j,\ell}+1}{2d_j} \frac{(2d_j-1)!!}{(2d_j+1-2|I_j|)!!}.
\end{gather*}
If $a_n=0$, then the first summand is nontrivial only for $d_{k+1}=0$. So, if we substitute $a_n=0$, then this expression is equal to
\begin{gather*}
 \frac{(-1)^k(2g-3+k)!}{k!}\sum_{\substack{d_1+\dots+d_{k}\\=g-2+n}}
\prod_{j=1}^k \binom{2a_{[I_j]}+1}{2d_j} \frac{(2d_j-1)!!}{(2d_j+1-2|I_j|)!!} \\
\qquad{}\times \left(-(2g-2+k)+ \sum_{\ell=1}^k (2d_\ell+1-2|I_\ell|)\right).
\end{gather*}
Note that the last factor is equal to zero. Since the definition of $P$ reduces to the sum over $I_1\sqcup\cdots \sqcup I_{k} =\{1,\dots,n-1\}$ of the terms that we considered here, and we never used the restriction of $P$ to the simplex $a_1+\cdots+a_n=2g-3+n$, we have $P(a_1,\dots,a_{n-1},0)\equiv 0$.
\end{proof}

\begin{Corollary}The function $\tilde P(a_1,\dots,a_n)\coloneqq P(a_1,\dots,a_n)/\prod\limits_{i=1}^n a_i$ is a polynomial in $a_1,\dots, a_n$ of degree $2g-4+n$.
\end{Corollary}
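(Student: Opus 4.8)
The plan is to deduce the Corollary directly from the preceding Proposition ($P|_{a_i=0}\equiv 0$ for all $i$) together with unique factorization in the polynomial ring $\mathbb{Q}[a_1,\dots,a_n]$. First I would record that $P$ genuinely lies in $\mathbb{Q}[a_1,\dots,a_n]$: every summand of the right-hand side of~\eqref{eq:keyeq} is a rational multiple of a product of the polynomials $\binom{2a_{[I_j]}+1}{2d_j}$, each of which is a polynomial in the $a_\ell$ (of degree $2d_j$ in $a_{[I_j]}$).

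Next, I would reinterpret the vanishing $P|_{a_i=0}\equiv 0$ as the statement that $P$ lies in the kernel of the substitution homomorphism $\mathbb{Q}[a_1,\dots,a_n]\to \mathbb{Q}[a_1,\dots,\widehat{a_i},\dots,a_n]$, $a_i\mapsto 0$. That kernel is exactly the principal ideal $(a_i)$, so $a_i\mid P$ in $\mathbb{Q}[a_1,\dots,a_n]$ for each $i=1,\dots,n$. Since $\mathbb{Q}[a_1,\dots,a_n]$ is a UFD and $a_1,\dots,a_n$ are pairwise non-associate primes, simultaneous divisibility by each $a_i$ upgrades to divisibility by the product, so $\prod_{i=1}^n a_i\mid P$. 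Hence $\tilde P\coloneqq P/\prod_{i=1}^n a_i$ is a bona fide element of $\mathbb{Q}[a_1,\dots,a_n]$, i.e.\ a polynomial.

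For the degree, I would use that $\prod_{i=1}^n a_i$ is homogeneous of degree $n$, so $\deg\tilde P=\deg P-n$. Combined with $\deg P=2g-4+2n$ (as already noted above; concretely, the top-degree part of $P$ comes only from the $k=n$ term, where the double-factorial factors cancel to leave $\frac{(-1)^n(2g-3+n)!}{n!}\sum_{d_1+\cdots+d_n=g-2+n}\prod_{j=1}^n \binom{2a_j+1}{2d_j}$, whose degree-$(2g-4+2n)$ component is a nonzero constant times the coefficient of $z^{g-2+n}$ in $\prod_{j=1}^n\cosh(2a_j\sqrt{z})$), this yields $\deg\tilde P=2g-4+n$.

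The only point requiring care — and the step I would flag as the mild obstacle — is the divisibility bookkeeping: translating ``$P$ vanishes on the hyperplane $\{a_i=0\}$'' into ``$a_i$ divides $P$'' (which is just primality of the ideal $(a_i)$, equivalently exactness of the substitution map), and then invoking coprimality of the distinct variables to pass from $n$ separate divisibilities to a single divisibility by their product. Everything after that is elementary degree counting.
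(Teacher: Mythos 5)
Your main argument is exactly the intended one (the paper treats the corollary as immediate): $P$ is a genuine element of $\mathbb{Q}[a_1,\dots,a_n]$, the preceding proposition gives $P|_{a_i=0}\equiv 0$ identically (its proof never uses the simplex constraint), hence $a_i\mid P$ for each $i$, and since the variables are pairwise non-associate primes in the UFD $\mathbb{Q}[a_1,\dots,a_n]$ the product $\prod_i a_i$ divides $P$; dividing a polynomial of degree $2g-4+2n$ by a product of $n$ distinct variables leaves a polynomial of degree $2g-4+n$. That is the whole content, and it is correct.

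One caveat: your parenthetical justification of the \emph{exact} top degree of $P$ is wrong. In~\eqref{eq:keyeq} the inner sum runs over $d_1+\cdots+d_k=g-2+n$ for \emph{every} $k$, so each summand, for every $k$ from $1$ to $n$, is a polynomial of degree $2\sum_j d_j=2g-4+2n$ in the $a_\ell$ (and the double-factorial denominators $(2d_j+1-2|I_j|)!!$ at negative odd arguments are nonzero rationals, so these terms do not drop out). Hence the degree-$(2g-4+2n)$ part of $P$ is a sum over all $k$, not just the $k=n$ term, and your identification of it with the $\cosh$ coefficient does not follow; establishing non-cancellation would need a separate argument. Fortunately this is inessential: the paper itself only records the degree of $P$ (and hence of $\tilde P$) as coming from the degree count of the binomial factors, i.e.\ as an upper bound, and that is all that is used later (small degree versus many vanishing lattice points); your divisibility argument already yields $\deg\tilde P\leq 2g-4+n$, which suffices.
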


So, we have a collection of symmetric polynomials of quite small degree (that is, smaller than what one expects trying to construct such non-trivial polynomials using the Lagrange interpolation, for instance) vanishing in all integer points of the certain simplices:
\begin{itemize}\itemsep=0pt
\item $P(a_1,\dots,a_n)$ is a polynomial of degree $2g-4+2n$ that vanishes at all integer points of the simplex $a_1,\dots,a_n\geq 0$, $a_1+\cdots+a_n=2g-3+n$.
\item $R(a_1,\dots,a_n)$ is a polynomial of degree $2g-2+n$ that vanishes at all integer points of the simplex $a_1,\dots,a_n\geq 0$, $a_1+\cdots+a_n=2g-3+n$.
\item $\tilde P(a_1+1,\dots,a_n+1)$ is a polynomial of degree $2g-4+n$ that vanishes at all integer points of the simplex $a_1,\dots,a_n\geq 0$, $a_1+\cdots+a_n=2g-3$.
\end{itemize}

\subsection[Combinatorial reduction of the identity for $a_i=1$]{Combinatorial reduction of the identity for $\boldsymbol{a_i=1}$}
In this section we give a combinatorial reduction of the identity~\eqref{eq:keyeq} in the case one of the arguments $a_i$ is equal to $1$.

\begin{Proposition}
For any $g\geq 2$ and $n\geq 1$, for any $a_1,\dots,a_n\in\mathbb{Z}_{\geq 0}$, we have
\begin{gather} \label{eq:reduction1}
 P(a_1,\dots,a_n,1) - P(a_1,\dots,a_n,0)= P(a_1,\dots,a_n) \cdot \left(4\sum_{i=1}^n a_i -8g+10-2n\right).
\end{gather}
\end{Proposition}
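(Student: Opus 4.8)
The strategy is to expand $P(a_1,\dots,a_n,1)$ by separating, in every ordered set partition $I_1\sqcup\cdots\sqcup I_k=\llbracket n+1\rrbracket$, the fate of the last element $n+1$: either it forms its own singleton block or it is adjoined to one of the blocks $I_\ell$ made out of $\{1,\dots,n\}$. This is exactly the bookkeeping already carried out in the proof that $P|_{a_i=0}\equiv 0$, so I would reuse that decomposition verbatim, only now evaluating at $a_{n+1}=1$ rather than $a_{n+1}=0$. Writing $b:=a_{n+1}$, for a fixed ordered partition $I_1\sqcup\cdots\sqcup I_k=\llbracket n\rrbracket$ the associated contribution to $P(a_1,\dots,a_n,b)$ is
\begin{gather*}
 (k+1)\frac{(-1)^{k+1}(2g-2+k)!}{(k+1)!}\sum_{\substack{d_1+\cdots+d_{k+1}\\ =g-2+n+1}}\binom{2b+1}{2d_{k+1}}\prod_{j=1}^k\binom{2a_{[I_j]}+1}{2d_j}\frac{(2d_j-1)!!}{(2d_j+1-2|I_j|)!!}\\
 {}+\frac{(-1)^k(2g-3+k)!}{k!}\sum_{\substack{d_1+\cdots+d_k\\ =g-2+n+1}}\sum_{\ell=1}^k\prod_{j=1}^k\binom{2a_{[I_j]}+2b\,\delta_{j\ell}+1}{2d_j}\frac{(2d_j-1)!!}{(2d_j+1-2|I_j|)!!}.
\end{gather*}
Subtracting the same expression at $b=0$ kills the $d_{k+1}=0$ part of the first line (which is exactly what survives at $b=0$), so the difference $P(a_1,\dots,a_n,1)-P(a_1,\dots,a_n,0)$ is a sum, over ordered partitions of $\llbracket n\rrbracket$, of finitely many explicit terms.

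Next I would carry out the substitution $b=1$, using $\binom{3}{0}=1$, $\binom{3}{2}=3$ (and $\binom{3}{2d}=0$ for $d\ge 2$), and $\binom{2a+3}{2d}-\binom{2a+1}{2d}=\binom{2a+1}{2d-1}+\binom{2a+1}{2d-2}$ together with standard Pascal manipulations, to rewrite each term back in terms of the "unshifted" binomials $\binom{2a_{[I_j]}+1}{2d_j}$ appearing in $P(a_1,\dots,a_n)$. The point is that after these reductions every summand becomes $P(a_1,\dots,a_n)$ (restricted to the fixed partition) multiplied by an explicit rational function of the $d_j$'s, $|I_j|$'s, $k$, $g$ and $n$; summing the resulting linear-in-$d_\ell$ expressions against the identity $\sum_\ell(2d_\ell+1-2|I_\ell|)=2(g-2+n+1)+k-2(n)=2g-2+k$ (the analogue of the "last factor is zero" step, but now with the shifted degree constraint $d_1+\cdots+d_k=g-2+n+1$) collapses everything to a single scalar multiple of $P(a_1,\dots,a_n)$. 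Tracking the numerical prefactors $(2g-2+k)!/(k+1)!$ versus $(2g-3+k)!/k!$ across the two lines, and the contributions $3$ from the $d_{k+1}=1$ singleton and the Pascal-split pieces from the $\ell$-sum, should assemble precisely into the claimed factor $4\sum_{i=1}^n a_i-8g+10-2n$.

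The main obstacle is purely organizational: there are several sources of terms (the $d_{k+1}=1$ singletons, the $\binom{2a+1}{2d-1}$ pieces, the $\binom{2a+1}{2d-2}$ pieces), they live with different shifts of the summation index $d_\ell$, and one must re-index all of them so that the common factor $\prod_j\binom{2a_{[I_j]}+1}{2d_j}\frac{(2d_j-1)!!}{(2d_j+1-2|I_j|)!!}$ is visible, after which a linear identity in the $d_\ell$'s (depending on whether $\ell$ was the block enlarged by $n+1$) does the cancellation. I expect that, just as in the $a_i=0$ proposition, one never needs the simplex constraint $a_1+\cdots+a_n=2g-3+n$ for this identity, so it holds as an identity of polynomials; I would state it that way and note that the degree of $P$ drops by $1$ when one of the arguments is fixed to an integer, consistent with the factor on the right being affine in the $a_i$.
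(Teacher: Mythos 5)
Your plan follows essentially the same route as the paper's proof: split the ordered partitions of $\llbracket n+1 \rrbracket$ according to whether $n+1$ forms a singleton block or is adjoined to an existing block of $\llbracket n \rrbracket$, note that subtracting the value at $a_{n+1}=0$ cancels the $d_{k+1}=0$ singleton contribution, and convert the remaining terms into summands of $P(a_1,\dots,a_n)$ by binomial manipulations; your remark that the identity holds as a polynomial identity without the simplex constraint also matches the paper. However, two points need attention before this is a proof. First, the Pascal identity you invoke is wrong as stated: one has $\binom{2a+3}{2d}-\binom{2a+1}{2d}=2\binom{2a+1}{2d-1}+\binom{2a+1}{2d-2}$, not $\binom{2a+1}{2d-1}+\binom{2a+1}{2d-2}$; used literally it would produce wrong coefficients. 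Moreover the double-factorial ratio also changes under the shift $d\mapsto d-1$, and the clean way to handle both at once (as the paper does) is the single identity
\begin{gather*}
\binom{2(a_{[J]}+1)+1}{2d}\frac{(2d-1)!!}{(2d-1-2|J|)!!}
=\binom{2a_{[J]}+1}{2d}\frac{(2d-1)!!}{(2d-1-2|J|)!!}\\
\qquad{}+\binom{2a_{[J]}+1}{2\tilde d}\frac{(2\tilde d-1)!!}{(2\tilde d+1-2|J|)!!}\,\big(4a_{[J]}+3-2\tilde d\big),
\qquad \tilde d=d-1.
\end{gather*}

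Second, the decisive step is exactly the one you defer with ``should assemble precisely into the claimed factor'': you must show that the total coefficient attached to a fixed summand of $P(a_1,\dots,a_n)$ is independent of the partition $I_1\sqcup\cdots\sqcup I_k=\llbracket n \rrbracket$ and of $d_1,\dots,d_k$, and equals $4\sum_i a_i-8g+10-2n$. In the paper this is a short count: a fixed summand with $d_1+\cdots+d_k=g-2+n$ arises in $k+1$ ways, namely $J=I_j$ for some $j$ (contributing $4a_{[I_j]}+3-2d_j$ via the identity above) or $J=\varnothing$ (where $\binom{3}{2}=3$, the degree shift forcing $d_{k+1}=1$, and the change of prefactor from $k$ to $k+1$ blocks contribute $-3(2g-2+k)$), and then $\sum_{j=1}^k\big(4a_{[I_j]}+3-2d_j\big)-3(2g-2+k)=4\sum_i a_i-8g+10-2n$. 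Carrying out this computation (or an equivalent one with correctly tracked Pascal terms and factorial ratios) is what remains to complete your argument.
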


\begin{proof}Let $\{n+1\}\sqcup J\subset \{1,\dots,n+1\}$. Consider the corresponding factor in a summand in $P(a_1,\dots,a_n,1)$ assuming $I_j\coloneqq\{n+1\}\sqcup J$, for some $j$, and denoting the corresponding index~$d_j$ by~$d$. We have the following decomposition:
\begin{gather}
\binom{2(a_{[J]}+1)+1}{2d} \frac{(2d-1)!!}{(2d-1-2|J|)!!} = \binom{2(a_{[J]}+0)+1}{2d} \frac{(2d-1)!!}{(2d-1-2|J|)!!}
\notag\\
\qquad{} + \binom{2a_{[J]}+1}{2\tilde d} \frac{(2\tilde d-1)!!}{(2\tilde d+1-2|J|)!!} \cdot \big(4a_{[J]}+3-2\tilde d\big),\label{eq:decompositiona1}
\end{gather}
where $\tilde d = d-1$. The first term on the right hand side is equal to the corresponding factor in the same summand in $P(a_1,\dots,a_n,0)$.
The second term gives a summand in $P(a_1,\dots,a_n)$ with a coefficient.

There are $(k+1)$ ways to obtain the summand
\begin{gather*}
(-1)^k(2g-3+k)!\cdot \prod_{j=1}^k \binom{2a_{[I_j]}+1}{2d_j} \frac{(2d_j-1)!!}{(2d_j+1-2|I_j|)!!}
\end{gather*}
in $P(a_1,\dots,a_n)$ (here $I_1\sqcup\cdots\sqcup I_k = \llbracket n \rrbracket$, $d_1+\cdots+d_k = g-2+n$, and we omit the factor $1/k!$ that controls the permutations of the sets $I_1,\dots, I_k$) from the second term of the decomposition~\eqref{eq:decompositiona1}: either $J=I_j$, $j=1,\dots, k$, or $J=\varnothing$. In the latter case, the extra coefficient that we get is equal to $-3(2g-2+k)$. Thus, the total coefficient of this summand is equal to
\begin{gather*}
\sum_{j=1}^k (4a_{[J]}+3-2d_j) -3(2g-2+k) = 4\sum_{i=1}^n a_i -8g+10-2n,
\end{gather*}
which does not depend on the choice of $I_1\sqcup\cdots\sqcup I_k = \llbracket n \rrbracket$ and $d_1+\cdots+d_k = g-2+n$. This implies equation~\eqref{eq:reduction1}.
\end{proof}

If we restrict equation~\eqref{eq:reduction1} to the simplex $a_1+\cdots+a_n=2g-3+n$ and use that $P(a_1,\dots,a_n,0)\equiv 0$
(without any assumptions on $a_1,\dots,a_n$), we obtain the following corollary:

\begin{Corollary} For any $g\geq 2$ and $n\geq 1$, for any $a_1,\dots,a_n\in\mathbb{Z}_{\geq 0}$, $a_1+\cdots+a_n=2g-3+n$, we have
\begin{gather*}
P(a_1,\dots,a_n,1) = (2n-2) P(a_1,\dots,a_n).
\end{gather*}
In particular, $P(2g-2,1)=0$, and for $n\geq 2$ the vanishing of $P(a_1,\dots,a_n)$ implies the vanishing of $P(a_1,\dots,a_n,1)$.
\end{Corollary}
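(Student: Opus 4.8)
The plan is to read this corollary straight off equation~\eqref{eq:reduction1} of the preceding proposition by restricting to the relevant hyperplane. First I would take the identity
\[
P(a_1,\dots,a_n,1) - P(a_1,\dots,a_n,0) = P(a_1,\dots,a_n)\cdot\Bigl(4\sum_{i=1}^n a_i - 8g + 10 - 2n\Bigr),
\]
which, crucially, holds with no constraint on the $a_i$, and impose $a_1+\cdots+a_n = 2g-3+n$. On this simplex the linear factor on the right collapses to the constant $4(2g-3+n) - 8g + 10 - 2n = 2n-2$, depending only on $n$.

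It then remains to kill the term $P(a_1,\dots,a_n,0)$. Here I would invoke the earlier proposition asserting $P|_{a_i=0}\equiv 0$, applied to the $(n+1)$-st slot: it gives $P(a_1,\dots,a_n,0)\equiv 0$ as a polynomial identity, hence in particular at every point of the simplex. Combining the two observations yields $P(a_1,\dots,a_n,1) = (2n-2)\,P(a_1,\dots,a_n)$, which is the assertion. The two ``in particular'' statements are then immediate: taking $n=1$ makes the coefficient $2n-2$ vanish while the simplex condition forces $a_1 = 2g-2$, so $P(2g-2,1)=0$; and for $n\geq 2$ the right-hand side is a scalar multiple of $P(a_1,\dots,a_n)$, so the vanishing of the latter forces that of $P(a_1,\dots,a_n,1)$.

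I do not expect any genuine obstacle in this argument. All of the combinatorial work has already been carried out in the proof of the previous proposition (the binomial decomposition~\eqref{eq:decompositiona1} together with the count of the $(k+1)$ ways a given summand of $P(a_1,\dots,a_n)$ arises, and the handling of the exceptional case $J=\varnothing$ contributing $-3(2g-2+k)$), and the vanishing $P|_{a_i=0}\equiv 0$ is likewise already established; what is left here is only the one-line arithmetic of the linear substitution on the simplex.
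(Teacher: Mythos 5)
Your proposal is correct and matches the paper's own argument: restrict the identity~\eqref{eq:reduction1} to the simplex $a_1+\cdots+a_n=2g-3+n$, where the linear factor becomes $4(2g-3+n)-8g+10-2n=2n-2$, and use the unconditional vanishing $P(a_1,\dots,a_n,0)\equiv 0$ from the earlier proposition. The derivation of the two ``in particular'' statements is likewise exactly as in the paper.
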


\subsection{A conjectural refinement of the identity}
In this section we formulate a conjectural refinement of the identity~\eqref{eq:keyeq}, which gives a natural strategy for its combinatorial proof. In particular, it allows to prove it for $n\leq 5$ for all $g$.

We replace each factor $\binom{2a_{[I]}+1}{2d}$ in each summand of the identity by the sum $\binom{2a_{[I]}}{2d}+\binom{2a_{[I]}}{2d-1}$. Then we collect all terms with the fixed number of factors where we have chosen to decrease $2d$ to $2d-1$. Let us define $P_{n,t}$ as
\begin{gather*}
\sum_{k=1}^n \frac{(-1)^k(2g-3+k)!}{k!}\\
\qquad{}\times
\sum_{\substack{I_1\sqcup\cdots\sqcup I_k = \llbracket n \rrbracket\\ I_j\neq \varnothing, \, \forall\, j\in \llbracket k \rrbracket}}
\sum_{\substack{d_1,\dots,d_k \in\mathbb{Z}_{\geq 0} \\ d_1+\cdots+d_k=g-2+n}}
\sum_{\substack{A\subset \llbracket k \rrbracket \\ |A|=n-t}}
\prod_{j=1}^k \binom{2a_{[I_j]}}{2d_j-\delta_{j\in A}} \frac{(2d_j-1)!!}{(2d_j+1-2|I_j|)!!}
\end{gather*}
for $t=0,\dots,n$. Here $\delta_{j\in A}$ is equal to $1$ for $j\in A$ and to $0$ otherwise.
For instance,
\begin{gather*}
P_{n,0} =(-1)^n(2g-3+n)!
\sum_{\substack{o_1,\dots,o_n \in(2\mathbb{Z}+1)_{>0} \\ o_1+\cdots+o_n=2g-4+n}}
\prod_{j=1}^n \binom{2a_j}{o_j}, \\
P_{n,1} =(-1)^n(2g-3+n)! \sum_{i=1}^n
\sum_{\substack{o_1,\dots,
\widehat{o_i}
,\dots o_n \in(2\mathbb{Z}+1)_{>0}
\\
e_i\in (2\mathbb{Z})_{\geq 0}
\\
o_1+\cdots\widehat{o_i} \cdots+o_n+ e_i=2g-3+n}}
\prod_{\substack{j=1\\ j\not=i}}^n \binom{2a_j}{o_j} \binom {2a_i}{e_i} \\
\hphantom{P_{n,0} =}{}
+(-1)^{n-1}(2g-4+n)! \sum_{\substack{i,\ell=1\\ i<\ell}}^n
\sum_{\substack{o_1,\dots,
\widehat{o_i}, \dots, \widehat{o_\ell}
,\dots o_n \in(2\mathbb{Z}+1)_{>0}
\\
o_{i\ell}\in (2\mathbb{Z}+1)_{> 0}
\\
o_1+\cdots\widehat{o_i},\widehat{o_\ell} \cdots+o_n+ o_{i\ell}=2g-3+n}}
\prod_{\substack{j=1\\ j\not=i,\ell}}^n \binom{2a_j}{o_j} \binom {2a_i+2a_\ell}{o_{i\ell}}\cdot o_{i\ell},
\\
 P_{n,n} =
 \sum_{k=1}^n \frac{(-1)^k(2g-3+k)!}{k!}
 \sum_{\substack{I_1\sqcup\cdots\sqcup I_k \\ = \{1,\dots,n\}}}
 \sum_{\substack{e_1,\dots,e_k \in(2\mathbb{Z})_{\geq 0} \\ e_1+\cdots+e_k=2g-4+2n}}
 \prod_{j=1}^k \binom{2a_{[I_j]}}{e_j} \frac{(e_j-1)!!}{(e_j+1-2|I_j|)!!}.
\end{gather*}
Here we use the notation $o_\bullet$ (resp., $e_\bullet$) to stress that these are odd (resp., even) non-negative numbers, and $\widehat{o_i}$ means that this particular index is skipped.

Denote by $A_n$ the sum $(-1)^n(2g-4+n)! \sum_{\substack{o_1,\dots,o_n \in(2\mathbb{Z}+1)_{>0} \\ o_1+\cdots+o_n=2g-4+n}}
\prod_{j=1}^n \binom{2a_j}{o_j}$.

\begin{Conjecture}\label{conj:comb}
For any $n\geq 2$ and $t=0,\dots,n$, $a_1,\dots,a_n\in\mathbb{Z}_{\geq 0}$, $a_1+\cdots+a_n=2g-3+n$, we have
\begin{gather} \label{eq:combconj}
P_{n,t} = (-1)^t \left[\left(\binom{n-1}{t}-\binom{n-1}{t-1}\right)(2g-3+n+t) +2(t-1)\binom{n-1}{t-1}\right] A_n.
\end{gather}
\end{Conjecture}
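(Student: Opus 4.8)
The plan is to treat Conjecture~\ref{conj:comb} as a refinement of the already-established main combinatorial identity~\eqref{eq:keyeq}, and to prove it by separating the dependence on $t$ from the dependence on the $a_i$. The starting observation is that summing~\eqref{eq:combconj} over $t=0,\dots,n$ must reproduce $0 = P(a_1,\dots,a_n)$ (the main identity, which holds since Faber's conjecture is a theorem), so the coefficients on the right-hand side are forced to satisfy $\sum_{t=0}^n (-1)^t[(\binom{n-1}{t}-\binom{n-1}{t-1})(2g-3+n+t)+2(t-1)\binom{n-1}{t-1}] = 0$; I would first verify this binomial identity directly (it is a short generating-function computation: differentiate $(1-x)^{n-1}$ and evaluate appropriately at $x=1$, using $\sum_t (-1)^t\binom{n-1}{t} = 0$ and $\sum_t (-1)^t t\binom{n-1}{t}=0$ for $n\geq 3$, treating $n=2$ separately). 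This is a consistency check and also suggests the structure: each $P_{n,t}$ should be a universal scalar times the single polynomial $A_n$.

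\textbf{Key steps.} First I would establish that $P_{n,t}$ is, as a polynomial in $a_1,\dots,a_n$ restricted to the simplex $a_1+\cdots+a_n=2g-3+n$, proportional to $A_n$; the natural way is an induction on $n$ using the reduction lemmas already proved in the paper — in particular the $a_i=0$ vanishing ($P|_{a_i=0}\equiv 0$) and the $a_i=1$ recursion of equation~\eqref{eq:reduction1}, suitably refined at the level of each $P_{n,t}$. Concretely, one splits off the last index $n+1$ in the definition of $P_{n+1,t}$, as in the proof of the $a_i=1$ reduction: the element $n+1$ either forms its own block $\{n+1\}$ or joins an existing block $I_\ell$, and in the second case the binomial $\binom{2(a_{[J]}+a_{n+1})+1}{2d}$ is expanded via the Chu–Vandermonde-type decomposition~\eqref{eq:decompositiona1}. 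Tracking how this splitting interacts with the auxiliary set $A\subset\llbracket k\rrbracket$ (the choice of which blocks get $2d_j\to 2d_j-1$) gives a recursion expressing $P_{n+1,t}$ in terms of $P_{n,t}$ and $P_{n,t-1}$ with explicit polynomial-in-$(\sum a_i, g, n)$ coefficients; comparing with the Pascal-type recursion satisfied by the conjectured right-hand side coefficients then closes the induction.

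\textbf{Base case and bookkeeping.} For the base of the induction I would check $n=2$ directly: there $t\in\{0,1,2\}$, the block structures are just $\{1,2\}$ or $\{1\}\sqcup\{2\}$, and both $P_{2,t}$ and $A_2$ are explicit finite sums of products of two binomials $\binom{2a_1}{o_1}\binom{2a_2}{o_2}$ with $o_1+o_2$ fixed, so~\eqref{eq:combconj} becomes an identity among such sums that can be verified by a generating-function argument in two variables (or, since it is claimed for $n\leq 5$ anyway, by the finite computation indicated in the appendix). The remaining bookkeeping is to keep precise control of the combinatorial prefactors $(-1)^k(2g-3+k)!/k!$ and of how they shift when a block is enlarged ($(2g-3+k)!\to(2g-3+(k-1))!$, which is the source of the $(2g-4+n)!$ terms and hence of the $\binom{n-1}{t-1}$ contributions in the conjectured formula), and to verify that the "defect" terms coming from the $J=\varnothing$ case in the decomposition~\eqref{eq:decompositiona1} assemble, after using $\sum_j(2d_j+1-2|I_j|) = 2g-2+k$ on the simplex, into exactly the linear-in-$t$ correction $2(t-1)\binom{n-1}{t-1}(2g-3+n+t)^{-1}\cdots$ — i.e. the discrepancy between the two bracketed terms in~\eqref{eq:combconj}.

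\textbf{Main obstacle.} The genuinely hard part is the combinatorial bookkeeping in the induction step: the set $A$ of "decreased" blocks does not behave uniformly under enlarging a block (whether the enlarged block $I_\ell\cup\{n+1\}$ lies in $A$ or not changes which of the three terms of a Pascal-type expansion survives, and changes $d_\ell$ by $0$ or $1$), so one must carefully match, term by term, the contributions of $P_{n+1,t}$ against a $\Z$-linear combination $\alpha\,P_{n,t}+\beta\,P_{n,t-1}$ whose coefficients $\alpha,\beta$ themselves depend on $t,n,g$ and on $\sum a_i$. Getting these coefficients to agree with the Pascal recursion $\binom{n}{t}=\binom{n-1}{t}+\binom{n-1}{t-1}$ (and the analogous recursion with the extra linear-in-$t$ weight) is where all the real content sits; everything else is either already in the paper or a routine generating-function identity.
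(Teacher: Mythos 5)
There is a genuine gap, and an important contextual point: the statement you are proving is left as a \emph{conjecture} in the paper. The authors prove it only for $n\leq 5$ (all $t$) and for $t=0,1,2,3$ (all $n$), see Proposition~\ref{prop:conjtrue} and Appendix~\ref{sec:ProofSpecialCases}, by direct manipulations built on the reflection identity of Lemma~\ref{lemma:comb} and Chu--Vandermonde expansions; moreover Remark~\ref{rmk:conjimpliesid} states explicitly that \eqref{eq:combconj} does \emph{not} follow from the main identity \eqref{eq:keyeq}, so the known truth of Faber's conjecture cannot be used to shortcut the $t$-refined statement. Your ``consistency check'' (summing over $t$ and verifying the coefficients cancel) only re-derives the implication in Remark~\ref{rmk:conjimpliesid}, i.e., that the conjecture implies the main identity, which carries no weight towards proving the conjecture itself.

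The concrete gaps in the proposed induction are these. First, the reduction lemmas you want to invoke, namely $P|_{a_i=0}\equiv 0$ and the $a_i=1$ recursion \eqref{eq:reduction1}, are proved in the paper only for the total sum $P=\sum_t P_{n,t}$; whether refined versions hold for each fixed $t$ is precisely part of what must be proved, and the mechanisms of those proofs (the cancellation $-(2g-2+k)+\sum_{\ell}\bigl(2d_\ell+1-2|I_\ell|\bigr)=0$, and the decomposition \eqref{eq:decompositiona1} with its shift $d\to d-1$) do not obviously respect the grading by $|A|=n-t$, because the split $\binom{2a+1}{2d}=\binom{2a}{2d}+\binom{2a}{2d-1}$ defining that grading interacts nontrivially with both. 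Second, even granting a refined recursion at the specializations $a_{n+1}\in\{0,1\}$, these two values cannot determine $P_{n+1,t}$ as a polynomial in $a_{n+1}$, whose degree in that single variable is of order $2g$; so comparing two specializations against a Pascal-type recursion cannot close the induction, and indeed the paper's partial proofs avoid exactly this by working with all variables simultaneously via Lemma~\ref{lemma:comb}. Third, the term-by-term coefficient matching between $P_{n+1,t}$ and $\alpha P_{n,t}+\beta P_{n,t-1}$, which you yourself describe as ``where all the real content sits,'' is not carried out. As written, the proposal establishes nothing beyond what the paper already contains, and a complete argument along these lines would in fact be a new result answering an open question, so the burden of detail is correspondingly higher.
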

Observe that the right hand side is equal to
\begin{gather*}
(-1)^t \left[\binom{n-1}{t} (2g-2+n+(t-1))-\binom{n-1}{t-1}(2g-2+n-(t-1)) \right] A_n.
\end{gather*}

\begin{Remark} \label{rmk:conjimpliesid}
This conjecture does not follow from identity~\eqref{eq:keyeq}, so the equivalence of Faber's conjecture and identity~\eqref{eq:keyeq} does not prove equation~\eqref{eq:combconj}. On the other hand, let us prove that equation~\eqref{eq:combconj} implies the main combinatorial identity~\eqref{eq:keyeq}. Indeed,
\begin{gather*}
\sum_{t=0}^n (-1)^t \left[\binom{n-1}{t} (2g-2+n+(t-1))-\binom{n-1}{t-1}(2g-2+n-(t-1))\right]\\
\qquad{} = (2g-3+n) \sum_{t=0}^n (-1)^t \binom{n-1}{t}+ (n-1) \sum_{t=0}^n (-1)^t \binom{n-2}{t-1}\\
\qquad\quad{} - (2g-2+n) \sum_{t=0}^n (-1)^t \binom{n-1}{t-1} + (n-1) \sum_{t=0}^n (-1)^t \binom{n-2}{t-2} =0.
\end{gather*}
Therefore, a combinatorial proof of Conjecture~\ref{conj:comb} would immediately give a new proof of Faber's conjecture.
\end{Remark}

\begin{Proposition}\label{prop:conjtrue}
Conjecture~{\rm \ref{conj:comb}} is true for $n\leq 5$, any $t$, and for $t=0,1,2,3$, any $n$.
\end{Proposition}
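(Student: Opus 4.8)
The plan is to establish the listed cases of Conjecture~\ref{conj:comb} by direct manipulation of the finite sums defining $P_{n,t}$, always using the simplex constraint $a_1+\cdots+a_n=2g-3+n$. The case $t=0$ is immediate from the formula for $P_{n,0}$ already displayed in the text: it equals $(-1)^n(2g-3+n)!\sum\prod_j\binom{2a_j}{o_j}$ over odd $o_j>0$ summing to $2g-4+n$, hence equals $\frac{(2g-3+n)!}{(2g-4+n)!}A_n=(2g-3+n)A_n$, which is the right-hand side of~\eqref{eq:combconj} at $t=0$.

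For $t=1,2,3$ and arbitrary $n$, I would organise the double sum over set partitions $I_1\sqcup\cdots\sqcup I_k$ and subsets $A$ by \emph{shape}: the multiset of block sizes $|I_j|$ together with the information of which blocks carry the odd lower index $\binom{2a_{[I_j]}}{2d_j-1}$ and which carry the even one $\binom{2a_{[I_j]}}{2d_j}$. For fixed small $t$ only boundedly many shapes give nonzero contributions, since every merge of blocks and every even lower index is ``paid for'' out of a budget controlled by $t$, so a contributing configuration is close to the all-singletons, all-odd configuration underlying $A_n$. For each shape one performs the internal summations over the degree variables $d_j$ by means of the odd-parity refinement of the Chu--Vandermonde identity $\sum_{o_1+o_2=o}\binom{2a}{o_1}\binom{2b}{o_2}=\binom{2a+2b}{o}$ and its variants, using $\sum_i a_i=2g-3+n$ to absorb the last surviving factor; the outcome of each shape is an explicit scalar (a polynomial in $g$ and $n$ built from the block sizes) times $A_n$. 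It then remains to sum these scalars over all shapes and over $k$ and to check that the resulting alternating sum of products of binomial coefficients collapses to the bracketed coefficient in~\eqref{eq:combconj}, a computation of the same flavour as the one in Remark~\ref{rmk:conjimpliesid}.

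The step I expect to be the main obstacle is precisely this last collapse: the signs $(-1)^k$ and the factorials $(2g-3+k)!$ must conspire so that the contributions of the coarser partitions ($k<n$) cancel down to the clean closed form, and organising the terms so that these cancellations become visible is the heart of the argument, exactly as for the original identity~\eqref{eq:keyeq}.

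Finally, for $n\le5$ the only pairs not already covered are $(n,t)\in\{(4,4),(5,4),(5,5)\}$. Here $k\le5$, so the partition sum has only finitely many shapes and can be handled by the very same shape-by-shape resummation; alternatively, after restriction to the simplex both sides of~\eqref{eq:combconj} are polynomials in the $n-1\le4$ independent variables $a_1,\dots,a_{n-1}$ of controlled degree, so the identity can be certified by evaluation at sufficiently many integer points, the number of checks being reduced by the $P_{n,t}$-analogues of the vanishing $P|_{a_i=0}\equiv0$ and of the reduction~\eqref{eq:reduction1}. I do not expect these low-$n$ cases to be harder than the generic-$t$ argument.
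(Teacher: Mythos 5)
Your $t=0$ case is fine, and your general plan (expand every $\binom{2a_{[I_j]}}{2d_j-\delta}$ by Chu--Vandermonde into products of $n$ binomials $\binom{2a_i}{f_i}$ with tracked parities, collect terms by configuration, and reduce each configuration to a multiple of $A_n$) is the same skeleton as the paper's. But the step you wave through --- ``the outcome of each shape is an explicit scalar times $A_n$'' --- is precisely the nontrivial point, and the tool you invoke for it does not exist as stated: the ``odd-parity refinement'' $\sum_{o_1+o_2=o}\binom{2a}{o_1}\binom{2b}{o_2}=\binom{2a+2b}{o}$ with $o_1,o_2$ odd is false (take $a=b=1$, $o=2$: the left side is $4$, the right side is $6$). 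After the Chu--Vandermonde expansion you are left with parity-restricted sums of $\prod_i\binom{2a_i}{f_i}$ carrying polynomial weights in the $f_i$ (factors like $e_1$, $e_1e_2$, $o_1(o_1-1)$, $e_1(e_1-1)(e_1-2)$ coming from the double factorial ratios), with some lower indices forced even and with totals $\sum_i f_i$ differing from $2g-4+n$; none of these is, on its face, proportional to $A_n$. The paper closes this gap with a reflection lemma, $\binom{2a}{f}(f)_t=\binom{2a}{2a-f+t}\,(2a-f+t)_t$, summed over compositions with prescribed parities, which together with the constraint $\sum_i a_i=2g-3+n$ flips parities and shifts the total degree, converting each weighted even-containing sum into the all-odd sum defining $A_n$ (in particular it shows $A_n$ can equivalently be computed at total degree $2g-2+n$). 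Some such identity is indispensable; without it your shape-by-shape evaluation does not terminate in multiples of $A_n$, and the final ``collapse of scalars'' you defer to cannot even be set up.

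Your fallback for $(n,t)\in\{(4,4),(5,4),(5,5)\}$ is also not a proof as described. The identity is asserted for every $g\geq 2$, and the degree of $P_{n,t}$ as a polynomial in $a_1,\dots,a_{n-1}$ on the simplex grows with $g$, so ``evaluation at sufficiently many integer points'' is a $g$-dependent, unbounded family of checks, not a finite certification; moreover the $P_{n,t}$-analogues of $P|_{a_i=0}\equiv 0$ and of the reduction~\eqref{eq:reduction1} that you propose to use to cut down the checks were proved in the paper only for $P$ itself, not for the individual $P_{n,t}$, so they would have to be established separately. In the paper these low-$n$ cases are handled by the same Chu--Vandermonde-plus-reflection mechanism (made feasible because for $t=n$ or $t=n-1$ all lower indices are even or all but one are), not by interpolation.
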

We prove this proposition in Appendix~\ref{sec:ProofSpecialCases}.

\begin{Remark} Note that surprisingly this proposition is also true for $n=1$, though in this case we have no identity~\eqref{eq:keyeq}. Indeed, for $n=1$ we have
\begin{gather*}
A_1 = -(2g-3)! \binom{4g-4}{2g-3}, \\
P_{1,0} = -(2g-2)! \binom{4g-4}{2g-3} = (2g-2)A_1, \\
P_{1,1} = -(2g-2)! \binom{4g-4}{2g-2} = (2g-1)A_1,
\end{gather*}
which matches exactly equation~\eqref{eq:combconj} for $n=1$ and $t=0,1$.
\end{Remark}

\subsection{An equivalent formulation of the conjecture}
In this section we reformulate Conjecture~\ref{conj:comb} via a $3$-term recursion in the $P_{n,t}$. Let $\tilde{P}_{n,t}$ be
\begin{gather*}
\tilde{P}_{n,t} \coloneqq (-1)^t \left[\left(\binom{n-1}{t}-\binom{n-1}{t-1}\right)(2g-3+n+t) +2(t-1)\binom{n-1}{t-1}\right] A_n.
\end{gather*}

\begin{Proposition} Let $n\geq 2$ and $a_1,\dots,a_n\in\mathbb{Z}_{\geq 0}$, $a_1+\cdots+a_n=2g-3+n$. The following three statements, are equivalent:
\begin{enumerate}\itemsep=0pt
\item[$i)$] Conjecture~{\rm \ref{conj:comb}} holds:
\begin{gather*}
P_{n,t} = \tilde{P}_{n,t} \qquad \text{for all $t = 0,1, \dotsc, n$}.
\end{gather*}
\item[$ii)$] The $P_{n,t}$ obey the following $3$-term recursion for all $ t = 0, \dotsc, n$:
\begin{gather*}
(t+1) P_{n, t+1} + (n - (t+1))P_{n , t} = \frac{(-1)^t (2g - 1)}{(2g - 3 + n)} \binom{n}{t} P_{n,0}.
\end{gather*}
\item[$iii)$] The following expression does not depend on $t$:
\begin{gather*}
t! (n-t)! (-1)^t \left[(t+1) P_{n, t+1} + (n - (t+1))P_{n , t} \right] .
\end{gather*}
\end{enumerate}
\begin{proof}
Let
$S_{g,n}(x) \coloneqq \sum_{t=0}^n P_{n,t} x^t$,
for which we already know the values
\begin{gather*}
S_{g,n}(0)= P_{n,0} ,\\
S_{g,n}(1) = \sum_{t=0}^n P_{n,t}\\
\hphantom{S_{g,n}(1)}{} =\sum_{k=1}^n \frac{(-1)^k(2g-3+k)!}{k!}
\!\!\!\!\sum_{\substack{I_1\sqcup\cdots\sqcup I_k = \llbracket n \rrbracket\\ I_j\neq \varnothing,\, \forall\, j\in \llbracket k \rrbracket}}
\sum_{\substack{d_1,\dots,d_k \in\mathbb{Z}_{\geq 0} \\ \sum_i d_i =g-2+n}}
\prod_{j=1}^k \binom{2a_{[I_j]}+1}{2d_j} \frac{(2d_j-1)!!}{(2d_j+1-2|I_j|)!!}.
\end{gather*}
Let us now compute the generating polynomial for the $\tilde{P}_{n,t}$
\begin{gather*}
\tilde{S}_{g,n}(x) \coloneqq \sum_{t=0}^n \tilde{P}_{n,t} x^t \\
\hphantom{\tilde{S}_{g,n}(x)}{} = \sum_{t=0}^n (-x)^t \left[\left(\binom{n - 1}{t}-\binom{n - 1}{t - 1}\right)(2g-3+n+t) +2(t - 1)\binom{n - 1}{t - 1}\right] A_n.
\end{gather*}
First substitute $A_n = P_{n,0}/(2g - 3 + n)$ and expand the whole expression in terms of the type
\begin{gather*}
\sum_{t=0}^m \binom{m}{t} p(t) (-x)^t x^a,
\end{gather*}
where $p(t)$ is a polynomial in $t$ and $a$ is an integer. For each such summand substitute $p(t)$ with $p\big(x \frac{{\rm d}}{{\rm d}x}\big)$, apply Newton binomial theorem to $\sum_{t=0}^m \binom{m}{t} (-x)^t = (1-x)^n$, and finally apply the operator $p\big(x \frac{{\rm d}}{{\rm d}x}\big)$ to the summand. Collecting the summands' resulting contributions together gives
\begin{gather}
\tilde{S}_{g,n}(x) = \frac{(1-x)^{n-1}}{(2g - 3 + n)}\left( (2g-1)(x+1) + (n - 2)\right)P_{n,0}.
\end{gather}
Observe that
$
S_{g,n}(0) = \tilde{S}_{g,n}(0),
$
and $\tilde{S}(x)$ satisfies the non-homogeneous first order ODE
\begin{align}
(2g - 3 + n )\left[ (1-x) f'(x) + ( n -1 ) f(x) \right] = (1-x)^n (2g - 1 ) P_{n,0},
\end{align}
which $S_{g,n}(x)$ also satisfies if and only if
\begin{gather}\label{eq:propPnt}
(t+1) P_{n, t+1} + (n - (t+1))P_{n , t} = \frac{(-1)^t (2g - 1)}{(2g - 3 + n)} \binom{n}{t} P_{n,0},
\end{gather}
for $t = 0, 1, \dots, n$. This proves the equivalence between $(i)$ and $(ii)$. Clearly $(ii)$ implies $(iii)$. Let us see that $(iii)$ also implies $(ii)$. Assuming $(iii)$, we can evaluate $(ii)$ at any $t$. Let us pick $t=0$ for simplicity. Then $(ii)$ reads
\begin{align*}
P_{n, 1} &=\frac{ (2g - 1)}{(2g - 3 + n)} P_{n,0} - (n-1)P_{n,0}=\frac{ (2g - 1) - (n - 1)(2g - 3 + n )}{(2g - 3 + n)} P_{n,0}\\
&=\frac{ (2-n)(2g -2 +n)}{(2g - 3 + n)} P_{n,0}\\
&= - \left[\left(\binom{n-1}{1}-\binom{n-1}{0}\right)(2g-2+n) +2 \cdot 0 \cdot \binom{n-1}{0}\right] A_n,
\end{align*}
which holds true from the case $t=1$ in Proposition~\ref{prop:conjtrue}. This concludes the proof of the proposition.
\end{proof}
\end{Proposition}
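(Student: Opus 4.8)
The plan is to pass to the generating polynomial $S_{g,n}(x)\coloneqq\sum_{t=0}^n P_{n,t}x^t$ and reduce the whole equivalence to a first-order linear ODE whose polynomial solution is pinned down by a single initial value.

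First I would record the elementary identity $P_{n,0}=(2g-3+n)A_n$, which is immediate from the explicit formulas for $P_{n,0}$ and $A_n$ (they differ only by the factor $(2g-3+n)!/(2g-4+n)!$). Writing $\tilde S_{g,n}(x)\coloneqq\sum_{t=0}^n\tilde P_{n,t}x^t$, I would then expand $\tilde S_{g,n}$ into pieces of the shape $\sum_t\binom{m}{t}p(t)(-x)^t x^a$, replace each polynomial factor $p(t)$ by the Euler operator $p\big(x\frac{d}{dx}\big)$ acting on $(1-x)^m=\sum_t\binom{m}{t}(-x)^t$, and collect terms. This yields the closed form $\tilde S_{g,n}(x)=\frac{(1-x)^{n-1}}{2g-3+n}\big((2g-1)(x+1)+(n-2)\big)P_{n,0}$, and a one-line differentiation shows it solves the inhomogeneous equation $(2g-3+n)\big[(1-x)f'(x)+(n-1)f(x)\big]=(1-x)^n(2g-1)P_{n,0}$ with $f(0)=P_{n,0}$.

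Since $S_{g,n}(0)=P_{n,0}=\tilde S_{g,n}(0)$ and a first-order linear ODE has a unique polynomial solution with prescribed value at $0$, statement $(i)$ (i.e.\ $S_{g,n}=\tilde S_{g,n}$) is equivalent to $S_{g,n}$ satisfying this ODE. Extracting the coefficient of $x^t$ from the ODE converts that condition into exactly the recursion of $(ii)$, giving $(i)\Leftrightarrow(ii)$. The implication $(ii)\Rightarrow(iii)$ is then immediate: substituting the recursion collapses the bracketed quantity in $(iii)$ to $\frac{n!(2g-1)}{2g-3+n}P_{n,0}$, which is visibly independent of $t$.

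The one step needing an external input is $(iii)\Rightarrow(ii)$: knowing that the bracketed expression is constant in $t$ does not by itself fix its value, whereas $(ii)$ prescribes that value. To anchor the constant I would evaluate at $t=0$, which requires the explicit value of $P_{n,1}$; this is precisely the $t=1$ instance of Proposition~\ref{prop:conjtrue}, valid for all $n$. Feeding $P_{n,1}=\tilde P_{n,1}=\frac{(2-n)(2g-2+n)}{2g-3+n}P_{n,0}$ (after the arithmetic simplification $(2g-1)-(n-1)(2g-3+n)=(2-n)(2g-2+n)$) into $(iii)$ at $t=0$ reproduces $(ii)$ at $t=0$, and by the assumed $t$-independence $(ii)$ then holds for every $t$. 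I expect this borrowing of the already-established $t=1$ case to be the only genuine subtlety; everything else is the routine bookkeeping of translating a polynomial ODE into a coefficient recursion.
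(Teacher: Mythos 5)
Your proposal is correct and follows essentially the same route as the paper: generating polynomials $S_{g,n}$, $\tilde S_{g,n}$, the Euler-operator computation of the closed form of $\tilde S_{g,n}$, the first-order ODE whose coefficient extraction yields the recursion in $(ii)$, and anchoring $(iii)\Rightarrow(ii)$ at $t=0$ via the $t=1$ case of Proposition~\ref{prop:conjtrue}. The only difference is that you spell out the uniqueness of the polynomial solution with prescribed value at $0$, which the paper leaves implicit.
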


\appendix

\section{Proof of the main combinatorial identity for several cases}\label{sec:ProofSpecialCases}

In this section we prove Proposition~\ref{prop:conjtrue}. By Remark~\ref{rmk:conjimpliesid}, this implies a proof of the main combinatorial identity~\eqref{eq:keyeq} for $n\leq 5$ and $g\geq 2$. By Corollary~\ref{cor:identity}, this implies a new proof of Faber's conjecture for $n\leq 5$ and $g\geq 2$.

We would like to emphasize that we present in this proof a brief exposition of a quite intricate computation involving an interplay between binomials with even and odd denominators that we could not find deeply analyzed in the literature on combinatorics.

\begin{proof}[Proof of Proposition~\ref{prop:conjtrue}] The case $t=0$ is obvious from the definition. For the other cases, we perform quite intricate computations based on the following lemma.

\begin{Lemma}\label{lemma:comb}For any non-negative integers $a_1,\dots,a_n$, $a_1+\cdots+a_n=A$, and $t_1,\dots,t_n$, $t_1+\cdots+t_n=T$, and for an arbitrary vector of parities $(p_1,\dots,p_n)$, $p_i\in \mathbb{Z}_2$, we have
\begin{gather*}
\sum_{\substack{f_1+\cdots+f_n=B \\ \tilde f_i = p_i,\ i=1,\dots,n}} \prod_{i=1}^n \binom{2a_i}{f_i} (f_i)_{t_i}
=
\sum_{\substack{f_1+\cdots+f_n=2A-B+T \\ \tilde f_i = p_i+\tilde t_i,\ i=1,\dots,n}} \prod_{i=1}^n \binom{2a_i}{f_i} (f_i)_{t_i} .
\end{gather*}
Here by $\tilde f \in \mathbb{Z}_2$ we denote the parity of $f\in\mathbb{Z}$, and by $(f)_t$ we denote the Pochhammer symbol, $(f)_t\coloneqq f(f-1)\cdots (f+1-t)$.
\end{Lemma}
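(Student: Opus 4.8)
The plan is to reduce the identity to a single index-by-index reflection $f_i \mapsto 2a_i - f_i + t_i$ that matches the two sums term by term. The starting point is the elementary rewriting
\[
\binom{2a}{f}(f)_t = (2a)_t\,\binom{2a-t}{f-t},
\]
valid for all integers $f$ with the usual convention that a binomial coefficient vanishes when its lower index leaves $[0,\text{upper index}]$; indeed both sides are zero unless $t\le f\le 2a$, and in that range it is a one-line factorial computation. In particular I may assume $t_i\le 2a_i$ for every $i$, since otherwise $(2a_i)_{t_i}=0$ and every summand on both sides already vanishes.

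Next I would introduce the involution $\Phi$ of $\mathbb{Z}^n$ given by $\Phi(f_1,\dots,f_n)=(f_1',\dots,f_n')$ with $f_i'\coloneqq 2a_i-f_i+t_i$ (one checks $\Phi\circ\Phi=\mathrm{id}$). A direct computation gives $\sum_i f_i' = 2A-B+T$ and $\tilde f_i' = \widetilde{2a_i}+\tilde f_i+\tilde t_i = p_i+\tilde t_i$, so $\Phi$ carries the index set of the left-hand sum onto that of the right-hand sum. Moreover, combining the rewriting above with the binomial symmetry $\binom{2a_i-t_i}{f_i-t_i}=\binom{2a_i-t_i}{2a_i-f_i}$, one gets
\[
\binom{2a_i}{f_i'}(f_i')_{t_i}=(2a_i)_{t_i}\binom{2a_i-t_i}{f_i'-t_i}=(2a_i)_{t_i}\binom{2a_i-t_i}{2a_i-f_i}=(2a_i)_{t_i}\binom{2a_i-t_i}{f_i-t_i}=\binom{2a_i}{f_i}(f_i)_{t_i},
\]
so that $\prod_i\binom{2a_i}{f_i'}(f_i')_{t_i}=\prod_i\binom{2a_i}{f_i}(f_i)_{t_i}$. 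Hence $\Phi$ is a weight-preserving bijection between the two index sets, and the lemma follows.

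The only step that takes a little foresight is guessing the correct shift: the naive reflection $f_i\mapsto 2a_i-f_i$ does \emph{not} preserve $(f_i)_{t_i}$, so one is forced to trade the falling factorial for a shifted binomial before reflecting, and this produces exactly the correction $+t_i$ in the index together with the parity shift by $\tilde t_i$ that appears in the statement. Everything else is bookkeeping. (An alternative route is to read the claim off the generating function $\prod_i (2a_i)_{t_i}x^{t_i}(1+x)^{2a_i-t_i}$ by extracting the coefficient of $x^B$ and performing the parity selection $x\mapsto -x$ in each factor; this also works but requires an inclusion--exclusion over the parity patterns and is less transparent than the involution.)
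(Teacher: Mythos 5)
Your proof is correct and follows essentially the same route as the paper: the chain $\binom{2a}{f}(f)_t=(2a)_t\binom{2a-t}{f-t}=(2a)_t\binom{2a-t}{2a-f}=\binom{2a}{2a-f+t}(2a-f+t)_t$ is exactly the paper's one-line argument, and your involution $f_i\mapsto 2a_i-f_i+t_i$ just makes explicit the bookkeeping (sum $B\mapsto 2A-B+T$, parity shift by $\tilde t_i$) that the paper leaves implicit.
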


\begin{proof} It follows from the following identity
\begin{gather*}
\binom{2a}{f}(f)_t = \binom{2a-t}{f-t} (2a)_t = \binom{2a-t}{2a-f} (2a)_t = \binom{2a}{2a-f+t} (2a-f+t)_t.\tag*{\qed}
\end{gather*}\renewcommand{\qed}{}
\end{proof}

\begin{Example} If $a_1+\cdots+a_n=2g-3+n$, then
\begin{gather*}
\sum_{\substack{o_1,\dots,o_n \in(2\mathbb{Z}+1)_{>0} \\ o_1+\cdots+o_n=2g-4+n}}
\prod_{j=1}^n \binom{2a_j}{o_j}
=
\sum_{\substack{o_1,\dots,o_n \in(2\mathbb{Z}+1)_{>0} \\ o_1+\cdots+o_n=2g-2+n}}
\prod_{j=1}^n \binom{2a_j}{o_j}.
\end{gather*}
Thus we have an alternative definition of $A_n$ as the sum
\begin{gather*}
(-1)^n(2g-4+n)!
\sum_{\substack{o_1,\dots,o_n \in(2\mathbb{Z}+1)_{>0} \\ o_1+\cdots+o_n=2g-2+n}}
\prod_{j=1}^n \binom{2a_j}{o_j}.
\end{gather*}
\end{Example}

Below in all arguments we apply Lemma~\ref{lemma:comb} assuming the condition $a_1+\cdots+a_n=2g-3+n$. We will also make use several times of the Chu--Vandermonde identity $\sum_{k+\ell=n} \binom{r}{k}\binom{s}{\ell} = \binom {r+s}{n}$, which follows from the expansion of the identity $(1+x)^r(1+x)^s = (1+x)^{r+s}$.

\subsection[Case $t=1$]{Case $\boldsymbol{t=1}$}

We have
\begin{gather*}
P_{n,1} =(-1)^n(2g-3+n)! \sum_{i=1}^n
\sum_{\substack{o_1,\dots,
\widehat{o_i}
,\dots o_n \in(2\mathbb{Z}+1)_{>0}
\\
e_i\in (2\mathbb{Z})_{\geq 0}
\\
o_1+\cdots\widehat{o_i} \cdots+o_n+ e_i=2g-3+n}}
\prod_{\substack{j=1\\ j\not=i}}^n \binom{2a_j}{o_j} \binom {2a_i}{e_i} \\
\hphantom{P_{n,1} =}{}
+(-1)^{n-1}(2g-4+n)! \sum_{\substack{i,\ell=1\\ i<\ell}}^n
\sum_{\substack{o_1,\dots,
\widehat{o_i}, \dots, \widehat{o_\ell}
,\dots o_n \in(2\mathbb{Z}+1)_{>0}
\\
o_{i\ell}\in (2\mathbb{Z}+1)_{> 0}
\\
o_1+\cdots\widehat{o_i},\widehat{o_\ell} \cdots+o_n+ o_{i\ell}=2g-3+n}}
\prod_{\substack{j=1\\ j\not=i,\ell}}^n \binom{2a_j}{o_j} \binom {2a_i+2a_\ell}{o_{i\ell}}\cdot o_{i\ell} .
\end{gather*}

In the first term we can replace the factor $(2g-3+n)$ by the sum of the indices $o_1+\cdots\widehat{o_i} \cdots+o_n+ e_i$. In the second term we can apply the Chu-Vandermonde identity
\begin{gather*}
\binom {2a_i+2a_\ell}{o_{i\ell}}\cdot o_{i\ell}=\sum_{\substack{o \in(2\mathbb{Z}+1)_{\geq 0}
\\
e\in (2\mathbb{Z})_{> 0}
\\
o+e=o_{il}}} \left(\binom{2a_i}{o}\binom{2a_{\ell}}{e}+\binom{2a_i}{e}\binom{2a_{\ell}}{o}\right)(e+o).
\end{gather*}
On the right-hand side of this equation, we always have one even bottom argument in the binomial coefficients and all other bottom arguments are odd in both terms of this expression. The expression is totally symmetric with respect to the choice of the place of the even bottom argument. The coefficient in each summand of
\begin{gather*}\label{eq:t1An}
(-1)^n(2g-4+n)! \sum_{\substack{
o_2\dots,o_n \in(2\mathbb{Z}+1)_{>0}
\\
e_1\in (2\mathbb{Z})_{\geq 0}
\\
e_1+o_2+\cdots+o_n=2g-3+n}
}
\binom {2a_1}{e_1} \prod_{j=2}^n \binom{2a_j}{o_j}
\end{gather*}
(that is, we collect the terms where the even bottom argument is below $2a_1$) is equal to
\begin{gather*}\label{eq:t1Coeff}
\left(e_1+\sum_{i=2}^n o_i\right)-\sum_{i=2}^n (e_1+o_i) = -e_1\cdot \left(
\binom{n-1}{1} - \binom{n-1}{0}\right).
\end{gather*}
Applying Lemma~\ref{lemma:comb} to this term, we obtain
\begin{gather*}
(-1)^n(2g-4+n)! \!\!\! \sum_{\substack{
o_2\dots,o_n \in(2\mathbb{Z}+1)_{>0}
\\
e_1\in (2\mathbb{Z})_{\geq 0}
\\
e_1+o_2+\cdots+o_n=2g-3+n}
}\!\!
\binom {2a_1}{e_1} \prod_{j=2}^n \binom{2a_j}{o_j}
\cdot
(-1)\cdot e_1\cdot \left(
\binom{n-1}{1} - \binom{n-1}{0}
\right)
\\
\qquad{} = (-1)^n(2g-4+n)! \!\!\!
\sum_{\substack{o_1,\dots,o_n \in(2\mathbb{Z}+1)_{>0} \\ o_1+\cdots+o_n=2g-2+n}}
\prod_{j=1}^n \binom{2a_j}{o_j} \cdot o_1 \cdot (-1)\cdot \left(
\binom{n-1}{1} - \binom{n-1}{0}
\right).
\end{gather*}
Now, since the even bottom argument could be at any place, not only at the first one, we have to replace in the full computation of $P_{n,1}$ the factor $o_1$ above by $o_1+\cdots+o_n=2g-2+n$. Thus we have
\begin{gather*}
P_{n,1} = A_n\cdot (2g-2+n)\cdot (-1)\cdot \left(\binom{n-1}{1} - \binom{n-1}{0}\right),
\end{gather*}
which is exactly the desired result for $t=1$.

\subsection[Case $t=2$]{Case $\boldsymbol{t=2}$}
 Let us describe $P_{n,2}$. All terms there have a common factor of $(2g-5+n)!$. The sum of bottom arguments of all binomial coefficients is always equal to $S\coloneqq 2g-2+n$. Taking into account the total symmetry with respect to the permutations of $a_1,\dots,a_n$, we see that $(-1)^nP_{n,2}/(2g-5+n)!$ has
\begin{alignat*}{3}
&\binom{n}{3} \ \text{terms of the type} \quad && \binom{2a_1+2a_2+2a_3}{o_{123}} \prod_{i=4}^n \binom{2a_i}{o_i} \cdot o_{123}(o_{123}-2), & \\
&3\binom{n}{4} \ \text{terms of the type} \quad && \binom{2a_1+2a_2}{o_{12}} \binom{2a_3+2a_4}{o_{34}}\prod_{i=5}^n \binom{2a_i}{o_i} \cdot o_{12}o_{34}, & \\
&\binom{n}{2}\binom{n-2}{1}\ \text{terms of the type} \quad && -\binom{2a_1+2a_2}{o_{12}} \binom{2a_3}{e_{3}}\prod_{i=4}^n \binom{2a_i}{o_i} \cdot o_{12}(S-2), & \\
&\binom{n}{2}\ \text{terms of the type} \quad && -\binom{2a_1+2a_2}{e_{12}} \prod_{i=3}^n \binom{2a_i}{o_i} \cdot (e_{12}-1)(S-2), & \\
&\binom{n}{2}\ \text{terms of the type} \quad && \binom{2a_1}{e_{1}} \binom{2a_2}{e_{2}} \prod_{i=3}^n \binom{2a_i}{o_i} \cdot (S-1)(S-2).&
\end{alignat*}
For instance, in the first line we mean that we have the following sum of $\binom{n}{3}$ summands
\begin{gather*}
\sum_{i<j<k} \sum_{\substack{
o_{ijk} \in (2\mathbb{Z}+1)_{>0} \\
o_\ell \in (2\mathbb{Z}+1)_{>0},\ \ell\in \{1,\dots,n\}\setminus \{i,j,k\} \\
o_{ijk}+\sum_{\ell\in \{1,\dots,n\}\setminus \{i,j,k\} }o_\ell = 2g-2+n
}}
\binom{2a_i+2a_j+2a_k}{o_{ijk}} \prod_{\substack{\ell=1 \\ \ell\not=i,j,k}}^n \binom{2a_\ell}{o_\ell} \cdot o_{ijk}(o_{ijk}-2).
\end{gather*}
We assume that the parity of the bottom arguments denoted by $o$ (resp., $e$) is odd (resp., even).

Let us expand all binomial coefficients using the Chu--Vandermonde identity, that is, in such a way that we have exactly $n$ factors of the type $\binom{2a_i}{f_i}$, where we also keep track of the possible parity of the bottom arguments. For instance,
\begin{gather*}
\binom{2a_1+2a_2+2a_3}{o_{123}} = \sum_{e_1+e_2+o_3=o_{123}}\!\! \binom{2a_1}{e_1}\binom{2a_2}{e_2} \binom{2a_3}{o_3}
+ \sum_{e_1+o_2+e_3=o_{123}} \!\!\binom{2a_1}{e_1}\binom{2a_2}{o_2} \binom{2a_3}{e_3}\\
 \qquad{} + \sum_{o_1+e_2+e_3=o_{123}} \binom{2a_1}{o_1}\binom{2a_2}{e_2} \binom{2a_3}{e_3}
+ \sum_{o_1+o_2+o_3=o_{123}} \binom{2a_1}{o_1}\binom{2a_2}{o_2} \binom{2a_3}{o_3}.
\end{gather*}
We compute all the coefficients to obtain
\begin{alignat*}{3}
&\binom{n}{2} \ \text{terms of the type} \quad && \binom{2a_1}{e_{1}} \binom{2a_2}{e_{2}} \prod_{i=3}^n \binom{2a_i}{o_i}\cdot 2e_1e_2\binom{n-2}{2}, & \\
&\binom{n}{1} \ \text{terms of the type} \quad&& \prod_{i=1}^n \binom{2a_i}{o_i}\cdot o_1(o_1-1) \left(\binom{n-1}{2} -\binom{n-1}{1}\right), & \\
&\binom{n}{2} \ \text{terms of the type} \quad&& \prod_{i=1}^n \binom{2a_i}{o_i}\cdot 2o_1o_2 \left(\binom{n-2}{1} -\binom{n-1}{1}\right), & \\
&\binom{n}{1} \ \text{terms of the type} \quad&& \prod_{i=1}^n \binom{2a_i}{o_i}\cdot o_1 \left(-\binom{n-1}{2} +\binom{n-1}{1}+\binom{n}{2}\right), &\\
&1 \ \text{term of the type} \quad && -\prod_{i=1}^n \binom{2a_i}{o_i}\cdot 2\binom{n}{2}. &
\end{alignat*}
Applying Lemma~\ref{lemma:comb} to the sum of all terms in the first line, we obtain the same terms as in the third line, with the coefficient $2o_1o_2\binom{n-2}2$. This, together with all terms in the second line and the third line, gives us the term in the fifth line with the coefficient $\left(\binom{n-1}{2}-\binom{n-1}{1}\right) S(S-1)$. The sum of all terms in the fourth line gives us also the term in the fifth line with the coefficient $S\left(-\binom{n-1}{2} +\binom{n-1}{1}+\binom{n}{2}\right)$. The observation that
\begin{gather*}
\left(\binom{n-1}{2}-\binom{n-1}{1}\right) S(S-1)+S\left(-\binom{n-1}{2} +\binom{n-1}{1}+\binom{n}{2}\right)-2\binom{n}{2}\\
\qquad{} = (S-2)\cdot \left[\left(\binom{n-1}{2}-\binom{n-1}{1}\right)(S+1) +2\binom{n-1}{1}\right]
\end{gather*}
is exactly the product of $(2g-4+n)$ and the desired coefficient of $P_{n,2}/A_n$ completes the proof of this case.

\subsection[Case $t=3$]{Case $\boldsymbol{t=3}$}
 Let us describe $P_{n,3}$. All terms there have a common factor of $(2g-6+n)!$. The sum of bottom arguments of all binomial coefficients is always equal to $S\coloneqq2g-1+n$. Taking into account the total symmetry with respect to the permutations of $a_1,\dots,a_n$, we see that $(-1)^nP_{n,3}/(2g-6+n)!$ has terms of the following type:
\begin{gather*}
-\binom{2a_1+2a_2+2a_3+2a_4}{o_{1234}} \prod_{i=5}^n \binom{2a_i}{o_i} \cdot o_{1234}(o_{1234}-2)(o_{1234}-4), \\
-\binom{2a_1+2a_2+2a_3}{o_{123}} \binom{2a_4+2a_5}{o_{45}}\prod_{i=6}^n \binom{2a_i}{o_i} \cdot o_{123}(o_{123}-2)o_{45}, \\
-\binom{2a_1+2a_2}{o_{12}} \binom{2a_3+2a_4}{o_{34}}\binom{2a_5+2a_6}{o_{56}}\prod_{i=7}^n \binom{2a_i}{o_i} \cdot o_{12}o_{34}o_{56}, \\
\binom{2a_1+2a_2+2a_3}{o_{123}} \binom{2a_4}{e_4} \prod_{i=5}^n \binom{2a_i}{o_i} \cdot o_{123}(o_{1234}-2)(S-4), \\
\binom{2a_1+2a_2+2a_3}{e_{123}} \prod_{i=4}^n \binom{2a_i}{o_i} \cdot (e_{123}-1)(e_{123}-3)(S-4), \\
\binom{2a_1+2a_2}{o_{12}} \binom{2a_3+2a_4}{o_{34}} \binom{2a_5}{e_5} \prod_{i=6}^n \binom{2a_i}{o_i} \cdot o_{12}o_{34} (S-4) ,\\
\binom{2a_1+2a_2}{e_{12}} \binom{2a_3+2a_4}{o_{34}} \prod_{i=5}^n \binom{2a_i}{o_i} \cdot (e_{12}-1)o_{34} (S-4) ,\\
-\binom{2a_1+2a_2}{o_{12}} \binom{2a_3}{e_3} \binom{2a_4}{e_4} \prod_{i=5}^n \binom{2a_i}{o_i} \cdot o_{12} (S-3) (S-4) ,\\
-\binom{2a_1+2a_2}{e_{12}} \binom{2a_3}{e_3} \prod_{i=4}^n \binom{2a_i}{o_i} \cdot (e_{12}-1) (S-3) (S-4) ,\\
\binom{2a_1}{e_1}\binom{2a_2}{e_2} \binom{2a_3}{e_3} \prod_{i=4}^n \binom{2a_i}{o_i} \cdot (S-2)(S-3)(S-4).
\end{gather*}

Let us expand all binomial coefficients using the Chu--Vandermonde identity, that is, in such a way that we have exactly $n$ factors of the type $\binom{2a_i}{f_i}$, where we also keep track of the possible parity of the bottom arguments. Computing the coefficients, we obtain terms of the type $\binom{2a_1}{e_1}\binom{2a_2}{e_2} \binom{2a_3}{e_3} \prod_{i=4}^n \binom{2a_i}{o_i} $ and $\binom{2a_1}{e_1}\prod_{i=2}^n \binom{2a_i}{o_i}$ with some complicated coefficients that we want to collect in several disjoint groups.

\subsubsection{First group of terms} Denote $-\left( \binom{n-1}{3} - \binom{n-1}{2} \right)$ by $C_1$. With this coefficient we have terms of the following type:
\begin{gather*}
 \binom{2a_1}{e_1}\prod_{i=2}^n \binom{2a_i}{o_i} \cdot e_1(e_1-1)(e_1-2) \cdot C_1, \\
 \binom{2a_1}{e_1}\binom{2a_2}{e_2} \binom{2a_3}{e_3} \prod_{i=4}^n \binom{2a_i}{o_i} \cdot e_1e_2e_3 \cdot 6C_1, \\
 \binom{2a_1}{e_1}\binom{2a_2}{e_2}\prod_{i=3}^n \binom{2a_i}{o_i} \cdot e_1o_2(o_2-1) \cdot 3C_1.
\end{gather*}
Applying Lemma~\ref{lemma:comb} to these terms, we obtain
\begin{gather*}
\sum_{\substack{o_1+\cdots+o_n \\ = 2g-2+n}} \prod_{i=1}^n \binom{2a_i}{o_i} \cdot (2g-2+n)(2g-3+n)(2g-4+n) C_1.
\end{gather*}

\subsubsection{Second group of terms} Denote $-4 \binom{n-1}{2}$ by $C_2$. With this coefficient we have terms of the following type:
\begin{gather*}
 \binom{2a_1}{e_1}\prod_{i=2}^n \binom{2a_i}{o_i} \cdot e_1 \cdot e_1C_2, \\
 \binom{2a_1}{e_1}\binom{2a_2}{o_2} \prod_{i=3}^n \binom{2a_i}{o_i} \cdot e_1 \cdot o_2C_2, \\
 \binom{2a_1}{e_1}\prod_{i=2}^n \binom{2a_i}{o_i} \cdot e_1 \cdot (-4)C_2.
\end{gather*}
We collect these terms into $\binom{2a_1}{e_1}\prod_{i=2}^n \binom{2a_i}{o_i} \cdot e_1 \cdot (2g-5+n)C_2$. Applying Lemma~\ref{lemma:comb} to all these terms, we obtain
\begin{gather*}
\sum_{\substack{o_1+\cdots+o_n \\ = 2g-4+n}} \prod_{i=1}^n \binom{2a_i}{o_i} \cdot (o_1+\cdots+o_n)(2g-5+n)C_2\\
\qquad{} = \sum_{\substack{o_1+\cdots+o_n \\ = 2g-4+n}} \prod_{i=1}^n \binom{2a_i}{o_i} \cdot (2g-4+n)(2g-5+n)C_2.
\end{gather*}
Applying Lemma~\ref{lemma:comb} again, we obtain
\begin{gather*}
\sum_{\substack{o_1+\cdots+o_n \\ = 2g-2+n}} \prod_{i=1}^n \binom{2a_i}{o_i} \cdot (2g-4+n)(2g-5+n)C_2.
\end{gather*}

\subsubsection{Third group of terms} Denote $6\left( \binom{n-1}{3} - \binom{n-1}{2} \right)$ by $C_3$. With this coefficient we have terms of the following type:
\begin{align*}
& \binom{2a_1}{e_1}\prod_{i=2}^n \binom{2a_i}{o_i} \cdot e_1 \cdot C_3.
\end{align*}
Applying Lemma~\ref{lemma:comb} to all these terms, we obtain
\begin{gather*}
\sum_{\substack{o_1+\cdots+o_n \\ = 2g-4+n}} \prod_{i=1}^n \binom{2a_i}{o_i} \cdot (o_1+\cdots+o_n)C_3
= \sum_{\substack{o_1+\cdots+o_n \\ = 2g-4+n}} \prod_{i=1}^n \binom{2a_i}{o_i} \cdot (2g-4+n)C_3.
\end{gather*}
Applying Lemma~\ref{lemma:comb} again, we obtain
\begin{gather*}
\sum_{\substack{o_1+\cdots+o_n \\ = 2g-2+n}} \prod_{i=1}^n \binom{2a_i}{o_i} \cdot (2g-4+n)C_3.
\end{gather*}

\subsubsection{Fourth group of terms} Denote $-2\binom{n-2}{1}$ by $C_4$. With this coefficient we have terms of the following type:
\begin{gather*}
\binom{2a_1}{e_1}\binom{2a_2}{e_2}\binom{2a_3}{e_3} \prod_{i=4}^n \binom{2a_i}{o_i} \cdot 3e_1e_2e_3 C_4, \\
\binom{2a_1}{e_1}\prod_{i=2}^n \binom{2a_i}{o_i} \cdot e_1o_2(o_2-1)C_4, \\
-\binom{2a_1}{e_1}\prod_{i=2}^n \binom{2a_i}{o_i} \cdot e_1(e_1-1)o_2C_4, \\
-\binom{2a_1}{e_1}\prod_{i=2}^n \binom{2a_i}{o_i} \cdot e_1o_2o_3C_4.
\end{gather*}
Applying Lemma~\ref{lemma:comb} to the first two lines, we obtain
\begin{gather*}
\sum_{\substack{o_1+\cdots+o_n \\ = 2g-2+n}} \prod_{i=1}^n \binom{2a_i}{o_i} \cdot \bigg(\sum_{i<j} o_io_j\bigg) \cdot (o_1+\cdots+o_n-2)C_4\\
\qquad{} = \sum_{\substack{o_1+\cdots+o_n \\ = 2g-2+n}} \prod_{i=1}^n \binom{2a_i}{o_i} \cdot \bigg(\sum_{i<j} o_io_j\bigg) \cdot (2g-4+n)C_4.
\end{gather*}
Applying Lemma~\ref{lemma:comb} to the last two lines, we obtain
\begin{gather*}
-\sum_{i<j} \sum_{\substack{e_i+e_j+\sum_{\ell\in\{1,\dots,n\}\setminus\{i,j\}}o_\ell \\ = 2g-2+n}}
\binom{2a_i}{e_i}\binom{2a_j}{e_j} \prod_{\substack{\ell=1\\ \ell\not=i,j}}^n \binom{2a_i}{o_i} \cdot e_ie_j \cdot \left(e_i+e_j+\!\!\! \sum_{\ell\in\{1,\dots,n\}\setminus\{i,j\}}\!\!\! o_\ell\right)C_4\\
\qquad{} = -\sum_{i<j} \sum_{\substack{e_i+e_j+\sum_{\ell\in\{1,\dots,n\}\setminus\{i,j\}}o_\ell \\ = 2g-2+n}}
\binom{2a_i}{e_i}\binom{2a_j}{e_j} \prod_{\substack{\ell=1\\ \ell\not=i,j}}^n \binom{2a_i}{o_i} \cdot e_ie_j \cdot (2g-4+n)C_4.
\end{gather*}
It follows from Lemma~\ref{lemma:comb} that
\begin{gather*}
\sum_{\substack{o_1+\cdots+o_n \\ = 2g-2+n}} \prod_{i=1}^n \binom{2a_i}{o_i} \cdot \left(\sum_{i<j} o_io_j\right)
-\sum_{i<j} \sum_{\substack{e_i+e_j\\ +\sum_{\ell\in\{1,\dots,n\}\setminus\{i,j\}}o_\ell \\ = 2g-2+n}} \!\!
\binom{2a_i}{e_i}\binom{2a_j}{e_j} \prod_{\substack{\ell=1\\ \ell\not=i,j}}^n \binom{2a_i}{o_i} \cdot e_ie_j =0.
\end{gather*}
Hence, the total sum of all terms with the coefficient $C_4$ is equal to $0$.

\subsubsection{Final computation} In order to complete the proof of the case $t=3$ it is sufficient to observe that
\begin{gather*}
 (2g-2+n)(2g-3+n)(2g-4+n)C_1+(2g-4+n)(2g-5+n)C_2+ (2g-4+n)C_3 \\
\qquad{} = -(2g-4+n)(2g-5+n) \left[\left(\binom{n-1}{3}-\binom{n-1}{2}\right)(2g+n) +4\binom{n-1}{2}\right].
\end{gather*}

\subsection[Case $n=t=4$]{Case $\boldsymbol{n=t=4}$} In this case $a_1+\cdots + a_4=2g+1$. We have the following formula for $P_{4,4}$:
\begin{gather*}
\frac{P_{4,4}}{(2g-1)!} =
\sum_{k=1}^4 \frac{(-1)^k(2g-3+k)!}{k!(2g-1)!}
\sum_{\substack{I_1\sqcup\cdots\sqcup I_k \\ = \{1,\dots,4\}}}
\sum_{\substack{e_1,\dots,e_k \in(2\mathbb{Z})_{\geq 0} \\ e_1+\cdots+e_k=2g+4}}
\prod_{j=1}^k \binom{2a_{[I_j]}}{e_j} \frac{(e_j-1)!!}{(e_j+1-2|I_j|)!!}.
\end{gather*}
Note that if $k=1$, then $(2g-3+k)!=(2g-2)!$. But then this term looks like
\begin{gather*}
\binom{2a_1+2a_2+2a_3+2a_4}{2g+4}(2g+3)(2g+1)(2g-1),
\end{gather*}
and the last factor here still allows us to extract the common coefficient of $(2g-1)!$. With that remark we see that every term in the expression for $\frac{P_{4,4}}{(2g-1)!}$ above is multiplied by a quadratic polynomial in $e_1,\dots,e_k$.

Applying the Chu--Vandermonde identity in the same way as in the previous cases, we obtain terms of the following type:
\begin{gather*}
 \prod_{i=1}^4 \binom{2a_i}{e_i} \cdot 0,
\qquad -\binom{2a_1}{e_1}\binom{2a_2}{e_2} \binom{2a_3}{o_3} \binom{2a_4}{o_3} \cdot 2e_1e_2,
\qquad -\prod_{i=1}^4 \binom{2a_i}{o_i} \cdot o_1(o_1-1).
\end{gather*}
Applying Lemma~\ref{lemma:comb} to all these terms, we obtain
\begin{align*}
& -\sum_{\substack{o_1+o_2+o_3 \\ +o_4= 2g}} \prod_{i=1}^4 \binom{2a_i}{o_i} \cdot (o_1+o_2+o_3+o_4)(o_1+o_2+o_3+o_4-1)
= -(2g-1)\cdot \frac{A_4}{(2g-1)!},
\end{align*}
which confirms this case of the proposition.

\subsection[Case $n=t=5$]{Case $\boldsymbol{n=t=5}$} In this case $a_1+\cdots + a_5=2g+2$. We have the following formula for $P_{5,5}$:
\begin{gather*}
\frac{P_{5,5}}{(2g-1)!} =
\sum_{k=1}^5 \frac{(-1)^k(2g-3+k)!}{k!(2g-1)!}
\sum_{\substack{I_1\sqcup\cdots\sqcup I_k \\ = \{1,\dots,5\}}}
\sum_{\substack{e_1,\dots,e_k \in(2\mathbb{Z})_{\geq 0} \\ e_1+\cdots+e_k=2g+6}}
\prod_{j=1}^k \binom{2a_{[I_j]}}{e_j} \frac{(e_j-1)!!}{(e_j+1-2|I_j|)!!}.
\end{gather*}
Note that if $k=1$, then $(2g-3+k)!=(2g-2)!$. But then this term looks like
\begin{gather*}
\binom{2a_1+2a_2+2a_3+2a_4+2a_5}{2g+6}(2g+5)(2g+3)(2g+1)(2g-1),
\end{gather*}
and the last factor here still allows us to extract the common coefficient of $(2g-1)!$. With that remark we see that every term in the expression for $\frac{P_{5,5}}{(2g-1)!}$ above is multiplied by a cubic polynomial in $e_1,\dots,e_k$.

Applying the Chu--Vandermonde identity in the same way as in the previous cases, we obtain terms of the following type:
\begin{gather*}
 -\binom{2a_1}{e_1}\binom{2a_2}{e_2} \binom{2a_3}{e_3} \binom{2a_4}{o_3} \binom{2a_5}{o_5} \cdot 6e_1e_2e_3, \\
 -\binom{2a_1}{e_1} \binom{2a_2}{o_2} \prod_{i=3}^5 \binom{2a_i}{o_i} \cdot 3e_1o_2(o_2-1), \\
 -\binom{2a_1}{e_1} \prod_{i=2}^5 \binom{2a_i}{o_i} \cdot e_1(e_1-1)(e_1-2).
\end{gather*}
Applying Lemma~\ref{lemma:comb} to all these terms, we obtain
\begin{align*}
& -\sum_{\substack{o_1+o_2+o_3 \\+o_4+o_5 = 2g+1}} \prod_{i=1}^5 \binom{2a_i}{o_i} \cdot \left(\sum_{i=1}^5 o_i\right)\left(\sum_{i=1}^5 o_i-1\right)\left(\sum_{i=1}^5 o_i-2\right)
= (2g-1)\cdot \frac{A_5}{(2g-1)!},
\end{align*}
which confirms this case of the proposition.

\subsection[Case $n=5$, $t=4$]{Case $\boldsymbol{n=5}$, $\boldsymbol{t=4}$} In this case $a_1+\cdots + a_5=2g+2$. We have the following formula for $P_{5,4}$:
\begin{gather*}
\frac{P_{5,4}}{(2g-1)!} =
\sum_{k=1}^5 \frac{(-1)^k(2g-3+k)!}{k!(2g-1)!}\\
\hphantom{\frac{P_{5,4}}{(2g-1)!} =}{}\times
\sum_{\substack{I_1\sqcup\cdots\sqcup I_k \\ = \{1,\dots,5\}}}
\sum_{\substack{e_1,\dots,e_k \in(2\mathbb{Z})_{\geq 0} \\ e_1+\cdots+e_k=2g+6}} \sum_{\ell=1}^k
\prod_{j=1}^k \binom{2a_{[I_j]}}{e_j-\delta_{\ell j}} \frac{(e_j-1)!!}{(e_j+1-2|I_j|)!!}.
\end{gather*}
Here we can divide by $(2g-1)!$ for the same reason as in the case $n=t=5$, and after that we can consider the coefficient of every term in this expression to be a cubic polynomial in $e_j-\delta_{\ell j}$.

We apply the Chu--Vandermonde identity in the same way as in the previous cases, and we obtain two groups of terms (the sum of the bottom arguments in the binomial coefficients in these terms is equal to $2g+5$).

The first group of terms consists of
\begin{alignat*}{3}
& 20 \ \text{terms of the type} \quad && \binom{2a_1}{e_1}\binom{2a_2}{e_2} \prod_{i=3}^5 \binom{2a_i}{o_i} \cdot (-6)e_1^2e_2, & \\
& 10 \ \text{terms of the type} \quad && \binom{2a_1}{e_1}\binom{2a_2}{e_2} \prod_{i=3}^5 \binom{2a_i}{o_i} \cdot 42 e_1e_2, & \\
& 30 \ \text{terms of the type} \quad && \binom{2a_1}{e_1} \binom{2a_2}{e_2} \binom{2a_3}{o_3} \prod_{i=4}^5 \binom{2a_i}{o_i} \cdot (-6) e_1e_2o_1.&
\end{alignat*}
Taking into account that $e_1+e_2+o_3+o_4+o_5=2g+5$, we see that the sum of all this terms is equal to
\begin{gather}\label{eq:case54-1}
10 \ \text{terms of the type} \quad \binom{2a_1}{e_1}\binom{2a_2}{e_2} \prod_{i=3}^5 \binom{2a_i}{o_i} \cdot (-6)e_1e_2(2g-2).
\end{gather}

The second group of terms consists of
\begin{alignat*}{3}
& 5 \ \text{terms of the type}\quad && \binom{2a_1}{o_1} \prod_{i=2}^5 \binom{2a_i}{o_i} \cdot (-3)o_1(o_1-1)(o_1-7), & \\
& 20 \ \text{terms of the type} \quad && \binom{2a_1}{o_1} \binom{2a_2}{o_2}\prod_{i=3}^5 \binom{2a_i}{o_i} \cdot (-3)o_1(o_1-1)o_2.&
\end{alignat*}
Taking into account that $o_1+o_2+o_3+o_4+o_5=2g+5$, we see that the sum of all this terms is equal to
\begin{gather}\label{eq:case54-2}
5 \ \text{terms of the type} \quad \binom{2a_1}{o_1} \prod_{i=2}^5 \binom{2a_i}{o_i} \cdot (-3)o_1(o_1-1)(2g-2).
\end{gather}

We apply Lemma~\ref{lemma:comb} to \eqref{eq:case54-1} and \eqref{eq:case54-2}, and this gives us
\begin{align*}
& \sum_{\substack{o_1+o_2+o_3 \\+o_4+o_5 = 2g+1}} \prod_{i=1}^5 \binom{2a_i}{o_i} \cdot (-3)\left(\sum_{i=1}^5 o_i\right)\left(\sum_{i=1}^5 o_i-1\right)
= -3(2g-2)\cdot \frac{A_5}{(2g-1)!},
\end{align*}
which confirms the proposition in this case. This concludes the proof of the proposition.
\end{proof}

\subsection*{Acknowledgments}
\vspace{-1mm}

We thank A.~Buryak, J.~Schmitt and D.~Zagier for useful comments on the first version of the paper. R.K., D.L., and S.S.\ were supported by the Netherlands Organization for Scientific Research. D.L.~was also supported by the Max Planck Gesellschaft. E.G.-F.~was supported by the Max Planck Gesellschaft and by the Labex Mathematics Hadamard. She is also grateful for the research stay at the University of Amsterdam, which made the beginning of this work possible. We thank the anonymous referees for many useful remarks.

\vspace{-3.2mm}

\pdfbookmark[1]{References}{ref}
\LastPageEnding

\end{document}